\documentclass[12pt]{amsart}

\usepackage{mathrsfs}
\usepackage{amsfonts}
\usepackage{amssymb}
\usepackage{amsfonts, amscd, amsmath, mathrsfs, amssymb, amsthm, amsxtra, bbding, epsfig, graphicx, latexsym, url, mathbbol, bbold}
\usepackage[papersize={8.3in,11.8in},textwidth=16.8cm,textheight=23.2cm,centering]{geometry}
\usepackage{enumerate}
\usepackage{caption} 

\usepackage{graphicx}
\usepackage{tikz}
\usepackage{float}

\usepackage{xcolor}
\definecolor{cite}{rgb}{0.00,0.60,1.00}
\definecolor{url}{rgb}{1.00,0.10,0.80}
\definecolor{link}{rgb}{0.00,0.00,1.00}
\usepackage[colorlinks,linkcolor=link,urlcolor=url,citecolor=cite,pagebackref,breaklinks]{hyperref}

\hypersetup{
pdfstartpage=1,
pdfstartview=FitH}

\DeclareFontFamily{U}{mathx}{\hyphenchar\font45}
\DeclareFontShape{U}{mathx}{m}{n}{
      <5> <6> <7> <8> <9> <10>
      <10.95> <12> <14.4> <17.28> <20.74> <24.88>
      mathx10
      }{}
\DeclareSymbolFont{mathx}{U}{mathx}{m}{n}
\DeclareMathAccent{\widecheck}{\mathalpha}{mathx}{"71}

\numberwithin{equation}{section}

\allowdisplaybreaks

\newtheorem{theorem}{Theorem}[section]
\newtheorem{lemma}[theorem]{Lemma}

\newtheorem{proposition}[theorem]{Proposition}

\newtheorem{corollary}[theorem]{Corollary}

\newtheorem{remark}[theorem]{Remark}

\makeatletter
\newcounter{roem}
\renewcommand{\theroem}{\Roman{roem}}

\newcommand{\c@org@eq}{}
\let\c@org@eq\c@equation
\newcommand{\org@theeq}{}
\let\org@theeq\theequation

\newcommand{\setroem}{
\let\c@equation\c@roem
 \let\theequation\theroem}

\newcommand{\setarab}{
\let\c@equation\c@org@eq
\let\theequation\org@theeq}
\makeatother

\newtheorem*{claim*}{Claim}

\newcommand{\GL}{\mathrm{GL}}

\usepackage{scalerel}

\DeclareMathOperator{\Mod}{mod}

\renewcommand{\bmod}[1]{\,(\Mod{ #1})}

\newcommand{\R}{\mathbb{R}}
\newcommand{\E}{\mathbb{E}}

\newcommand{\N}{\mathbb{N}}

\newcommand{\U}{\mathbb{U}}
\newcommand{\W}{\mathbb{W}}
\newcommand{\X}{\mathbb{X}}

\newcommand{\Z}{\mathbb{Z}}

\newcommand{\sumh}{\sideset{}{^h}\sum}

\def\le{\leqslant}
\def\leq{\leqslant}

\def\geq{\geqslant}

\usepackage{graphicx}
\usepackage{tikz}

\begin{document}

\title{The typical size of Hecke eigenvalue sums is $o(\sqrt{x})$}

\author{Max Wenqiang Xu}
\address{Yau Mathematical Sciences Center, Tsinghua University, Beijing, 100084, China}
\address{Beijing Institute of Mathematical Sciences and Applications, Beijing, 101408, China}
\email{maxxu1729@gmail.com, maxxu@tsinghua.edu.cn}

\author{Junren Zheng}
\address{School of Mathematics and Statistics, Xi'an Jiaotong University, Xi'an 710049, P. R. China}

\address{D{\'e}partment  de Math{\'e}matiques et Statistique,
Universit{\'e} de Montr{\'e}al, CP 6128 succ Centre-Ville, Montr{\'e}al, QC  H3C 3J7, Canada}
\email{junrenzheng03@gmail.com, junren@stu.xjtu.edu.cn}

\subjclass[2020]{11F30, 11K65, 11L40}

\keywords{  Hecke eigenvalues, quadratic characters, better than squareroot cancellation, random multiplicative function}

\begin{abstract} 
Let $\mathcal{H}_k$ denote the set of normalized
 holomorphic Hecke cusp forms of weight $k$ for the full modular group
$ \mathrm{SL}_2(\mathbb Z)$. For each $f\in\mathcal{H}_k$, let $\lambda_f(n)$ 
denote the corresponding eigenvalue of the normalized Hecke operator
$T_n$. We prove nontrivial upper bounds for  \[\sum_{f \in \mathcal{H}_k}\omega_f \left|\sum_{n \leq x} \lambda_f(n)\right|^{2q},\] 
where $k$ is a  positive even integer, $1\leqslant x\leqslant k$, $0\leqslant q \leqslant 1$ and $
        \omega_f
        =
        \frac{\Gamma(k-1)}
        {(4\pi)^{k-1}\langle f,f\rangle}
        =
        \frac{2\pi^2}
        {(k-1)L(1,\mathrm{Sym}^2 f)}.
$ Our estimates match the conjecturally sharp upper bound for $1\leqslant x\leqslant k^{0.99}$. In particular, whenever both $x$ and $k/x$ tend to infinity with $k$, we obtain  \[\sum_{f \in \mathcal{H}_k}\omega_f \left|\sum_{n \leq x} \lambda_f(n)\right|=o(\sqrt{x}).\]   We  also prove upper bounds for low moments
of sums of
Hecke eigenvalues  associated with Hecke--Maass
cusp forms for $\mathrm{SL}_2(\mathbb Z)$.

We study these sums through the probabilistic model proposed by Cogdell--Michel. We also determine the order of magnitude of low moments of this probabilistic model, which is equivalent to determining the order of magnitude  of low moments of Hecke eigenvalue sums in the limit.
\end{abstract}

\maketitle

\begin{flushright}
\dedicatory{{\it \small }}
\end{flushright}
\setcounter{tocdepth}{1}

%\vglue -12mm
%\tableofcontents

%\vglue -15mm
\section{Introduction}

Understanding sums of arithmetic functions is a classical and important topic in number theory. Among these, character sums and zeta sums have been extensively studied.
Under GRH, for any non-trivial Dirichlet character \(\chi\bmod r\) one has 
\[
\sum_{n \leq x} \chi(n)\ll_\varepsilon x^{1/2+\varepsilon}r^\varepsilon .
\]
However, Harper’s breakthrough work \cite{H23} showed that character sums and zeta sums typically exhibit ``better than squareroot cancellation''. More precisely, uniformly for any $1 \leq x \leq r$ and any $0 \leq q \leq 1$, he proved the  upper bound
\begin{equation}\label{eq:upperbd-lowmoment-character-sums}
    \frac{1}{r-1} \sum_{\chi \bmod r} \left|\sum_{n \leq x} \chi(n)\right|^{2q} \ll \Biggl(\frac{x}{1 + (1-q)\sqrt{\log\log(10L_r)}} \Biggr)^q , \end{equation} 
where $r$ is a prime and $ L_r := \min\{x,r/x\}$. Taking $q=1/2$, we have, provided that \(L_r\to\infty\), 
\begin{equation}
    \frac{1}{r-1} \sum_{\chi \bmod{r} } \left|\sum_{n \leq x} \chi(n)\right|=o(\sqrt{x})  , \end{equation} which implies that the typical size of character sums is $o(\sqrt{x}).$  He also proved an analogous estimate for zeta sums: uniformly for any $1 \leq x \leq T$ and any $0 \leq q \leq 1$,
\begin{equation}\label{eq:upperbd-lowmoment-zeta-sums}
 \frac{1}{T} \int_{0}^{T} \left|\sum_{n \leq x} n^{it}\right|^{2q} \,dt \ll \Biggl(\frac{x}{1 + (1-q)\sqrt{\log\log(10L_T)}} \Biggr)^q ,
\end{equation}
where $L_T:=\min\{x,T/x\}$. In a recent paper \cite{H26} and in forthcoming work \cite{H}, Harper established
matching lower bounds for low moments of character sums, showing
that the above estimates \eqref{eq:upperbd-lowmoment-character-sums}  are sharp. 
We refer readers to \cite{GW25, X24, WX25, HX26} for recent developments related to Harper's work.

From the viewpoint of symmetry types of families of $L$-functions, the family $L(s,\chi)$ with $\chi$ rangeing over primitive Dirichlet characters of a fixed modulus and  the family  \(\zeta(s+it)\) with \(0 \leq t \leq T\) are two examples of unitary families of $L$-functions. Thus, Harper's work may be viewed as establishing the ``better than squareroot cancellation'' phenomenon for coefficient sums of $L$-functions arising from unitary families. It is thus a natural and important question to understand if this striking phenomenon is universal or not.  The main purpose of the present paper is to show that the same phenomenon persists for the orthogonal families of $L$-functions, specifically the orthogonal
families of holomorphic cusp forms of weight $k$ for \(\mathrm{SL}_2(\mathbb Z)\) and Hecke--Maass cusp forms for \(\mathrm{SL}_2(\mathbb Z)\). We will also briefly discuss the symplectic family of quadratic
characters later in the paper.
\bigskip
 
\noindent\emph{Orthogonal families.}
Let \(k\) be a positive even integer, and let \(\mathcal{H}_k\) denote the set
of normalized holomorphic Hecke cusp forms  of weight \(k\) for
\(\mathrm{SL}_2(\mathbb Z)\). For each \(f\in\mathcal{H}_k\), let \(\lambda_f(n)\) 
denote the corresponding eigenvalue of the normalized Hecke operator
\(T_n\) $(n\geq 1)$. Each \(f\in \mathcal{H}_k\) has the  Fourier expansion
\[
        f(z)=\sum_{n=1}^{\infty}
        \lambda_f(n)n^{(k-1)/2}e(nz),
\]
with \(\lambda_f(1)=1\). The classical dimension formula gives
\[
    |\mathcal{H}_k|
    = \frac{k}{12}+O(1).
\]   The  Hecke eigenvalues \(\lambda_f(n)\) are real and
satisfy the Hecke relation
\[
        \lambda_f(m)\lambda_f(n)
        =
        \sum_{d\mid (m,n)}
        \lambda_f\left(\frac{mn}{d^2}\right).
\]
We are interested in the sums of Hecke eigenvalues
\[
        S_f(x):=\sum_{n\leq x}\lambda_f(n).
\]

For a fixed form $f$, estimates for $S_f(x)$ have been obtained by various authors (see \cite{H27,HI89,W09}). The best known bound is due to Rankin \cite{R89}. He showed that
\[
    \sum_{n\leqslant x}\lambda_f(n)
    \ll_f x^{1/3}(\log x)^{-(1-8/(3\pi))}
\]
assuming the Sato--Tate conjecture, which is now a theorem of
Barnet-Lamb, Geraghty, Harris and Taylor \cite{BLGHT11}. 
Note that the implicit constant here depends on $f$ and thus depends on $k$. Assuming GRH, one can show that uniformly for all $f\in \mathcal{H}_k$,
\[
S_f(x)\ll_\varepsilon x^{1/2+\varepsilon}k^\varepsilon .
\]

Relatively little is known about the average behavior of \(S_f(x)\) as
\(f\) ranges over \(\mathcal{H}_k\). To the best of our knowledge, the work most closely related to ours is that of Carmichael~\cite{C26a,C26b}, who investigated the first moment, without absolute values, and the second
moment  of the dyadic sums
\(\sum_{x\leq n\leq 2x}\lambda_f(n)\)
over \(f\in\mathcal{H}_k\) of large weight. By applying the  Vorono\"i summation formula, Carmichael also proved that for $x\geq k^2/(8\pi^2)$ and  for any $\varepsilon>0$,  one has, uniformly for $f\in \mathcal{H}_k$,
\[              \sum_{x\leq n\leq 2x}\lambda_f(n)\ll x^{1/3+\varepsilon}.
\] 

In this paper we shall be concerned with the typical size
of \(S_f(x)\) as \(f\) varies in \(\mathcal{H}_k\).  We shall use
the harmonic weights that arise naturally in the Petersson trace formula.  For
\(f\in\mathcal{H}_k\), define the harmonic weight
\[
        \omega_f
        :=
        \frac{\Gamma(k-1)}
        {(4\pi)^{k-1}\langle f,f\rangle}
        =
        \frac{2\pi^2}
        {(k-1)L(1,\mathrm{Sym}^2 f)}
\]
where $\langle f,f\rangle$ denotes the Petersson inner product.
For any function \(W\) on \(\mathcal H_k\), set
\[
        \sumh_{f\in\mathcal{H}_k}W(f)
        :=
        \sum_{f\in\mathcal{H}_k}\omega_fW(f) .
\]
For a subset \(S\subseteq\mathcal{H}_k\), we define its harmonic measure by
\[
|S|_h:=\sumh_{f\in S}1.
\]
By the Petersson trace formula (see Lemma \ref{lemma:orthogonality-holomorphic}), we have
\[
        |\mathcal{H}_k|_h=1+O(e^{-k}).
\]
This shows that the harmonic measure is asymptotically a probability measure on
\(\mathcal{H}_k\).  Moreover, we have 
\[
        \frac{1}{k\log k}\ll \omega_f\ll \frac{\log k}{k}
\]
(see Goldfeld, Hoffstein and Lieman \cite[Appendix]{HL94}), from which we see that the harmonic weights are comparable to the uniform weight $1/|\mathcal{H}_k|\asymp 1/k$ up to logarithmic factors.

The Petersson trace formula implies that for \(1\leq x\leq k/100\),
\[
        \sumh_{f\in\mathcal{H}_k}
        \left|
        \sum_{n\leq x}\lambda_f(n)
        \right|^2
        =
        \lfloor x\rfloor+O(e^{-k}x^2)\ll x.
\]
It follows from H\"older's inequality that
\[
        \sideset{}{^h}\sum_{f\in\mathcal{H}_k}
        \left|
        \sum_{n\leq x}\lambda_f(n)
        \right|^{2q}
        \ll x^q
        \qquad (0\leq q\leq 1,~1\leq x\leq k/100).
\] 

Our first set of theorems improves this trivial bound.

 \begin{corollary}Let $k$ be a large positive even integer.
When both $x$ and $k/x$ tend to infinity with $k$, 
\begin{equation}\label{hecke-betterthan-square-root}\sumh_{f \in \mathcal{H}_k} \left|\sum_{n \leq x} \lambda_f(n)\right|=o(\sqrt{x}).\end{equation} 
\end{corollary}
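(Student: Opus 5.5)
The plan is to deduce the corollary from a low-moment upper bound of Harper type for Hecke eigenvalue sums. Concretely, I aim to show that uniformly for $1\le x\le k$ and $0\le q\le 1$,
\[
\sumh_{f\in\mathcal{H}_k}\left|\sum_{n\le x}\lambda_f(n)\right|^{2q}\ll \Biggl(\frac{x}{1+(1-q)\sqrt{\log\log(10L_k)}}\Biggr)^q,
\]
where $L_k:=\min\{x,k/x\}$, possibly with a weaker but still divergent denominator when $x$ is close to $k$. Taking $q=1/2$ then gives a bound $\sqrt{x}\,(\log\log L_k)^{-1/4}$. This is $o(\sqrt x)$ as soon as $x\to\infty$ and $k/x\to\infty$, since then $L_k\to\infty$.

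To prove the moment bound, I would follow Harper's strategy, replacing orthogonality of characters with the Petersson trace formula (Lemma \ref{lemma:orthogonality-holomorphic}).
\begin{enumerate}
\item Split each $n\le x$ according to its largest prime factor $P(n)$. Using Hecke multiplicativity, $\lambda_f(mp)=\lambda_f(m)\lambda_f(p)$ when $p\nmid m$; the non-squarefree cases $p^2\mid n$ contribute a small error by the second-moment bound.
\item This rewrites $S_f(x)$, up to negligible terms, as $\sum_{p}\lambda_f(p)\sum_{m\le x/p,\,P(m)<p}\lambda_f(m)$.
\item Group the primes into dyadic ranges. Apply the conditional-expectation trick: by Hölder, the $2q$-th moment is bounded via a second moment weighted by $|F_f(1/2+it)|^{2q-2}$-type Euler products over small primes.
\item Transfer, via Parseval, to integrals $\int|F_f(1/2+\sigma+it)|^2/|1/2+\sigma+it|^2\,dt$ of truncated Euler products $F_f(s)=\prod_{p\le y}(1-\lambda_f(p)p^{-s}+p^{-2s})^{-1}$.
\end{enumerate}

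The arithmetic input is then a comparison between the family $\{\lambda_f(p)\}_f$ under the harmonic measure and the Cogdell--Michel random model. In that model the $\lambda(p)$ are independent with Sato--Tate distribution, and $\lambda(p)=2\cos\theta_p$. The Petersson formula shows that moments of products $\prod\lambda_f(p)^{a_p}$ agree with the model up to $O(e^{-k})$, provided $\prod p^{a_p}$ is much smaller than $k$. This is why the range $x\le k^{0.99}$, and more generally $L_k$, enters. So I would expand the relevant Euler products into Dirichlet polynomials of length at most $k^{c}$, using high-moment bounds to cut off large primes, and then replace the family average by the model expectation.

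In the model, $\E|F(1/2+\sigma+it)|^{2}$ behaves like $\exp(\sum_p \E|\lambda(p)|^2/p^{1+2\sigma})$, which matches the unitary case because $\E\lambda(p)^2=1$. The mean value of $\lambda(p)$ vanishes, but $\E\lambda(p^2)=0$ rather than $1$, which changes only constants. Harper's multiplicative chaos argument, through the Girsanov-type random walk with a ballot or barrier estimate, then yields the $(\log\log)^{-1/2}$ saving.

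The main obstacle I expect is this last step: establishing the barrier estimate for the random walk $\sum_{p\le e^{j}}\mathrm{Re}\,\lambda(p)p^{-1/2-it}$, with Sato--Tate rather than Steinhaus increments. One must check that after exponential tilting the increments still have the needed Gaussian-like approximation and independence. Hölder steps must also keep all Dirichlet polynomials short enough for the Petersson transfer. My first attempt would be to compute the tilted moments directly, using the Chebyshev-polynomial structure of $\lambda(p^j)$, and reproduce Harper's lemma with only the variance constants changed.
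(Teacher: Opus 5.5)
Your final step matches the paper: the corollary is the case $q=1/2$ of Theorem~\ref{mainthm:low-moment-modular-sharp} (for $x\le k\exp(-(\log\log k)^2)$) and Theorem~\ref{mainthm:low-moment-modular} (for larger $x$), and both savings diverge once $x\to\infty$ and $k/x\to\infty$. But all the content is in the moment bound. There your sketch omits the essential step, derandomizing the conditioning, and the device you substitute for it does not work.

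The Petersson formula only transfers \emph{polynomial} expressions in the $\lambda_f(n)$ of total length at most $k^2/10^4$. A low moment, by contrast, requires in effect conditioning on non-polynomial functions of $(\lambda_f(p))_{p\le P}$.

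Suppose your weight is a pointwise Euler product, $W^{q-1}$ with $W=|F_f(\tfrac12+it_0)|^2$. Then H\"older loses, already in the random model. Values of $F$ at distance $\asymp 1$ from $t_0$ are essentially uncorrelated with $F(\tfrac12+it_0)$. Heuristically this gives $\E|S|^2W^{q-1}\approx x(\log P)^{(1-q)^2}$ and $\E W^q\approx(\log P)^{q^2}$, hence only $x^q(\log P)^{q(1-q)}$, which is worse than trivial. The weight that genuinely mimics conditioning is $V_f^{q-1}$ with $V_f=\int|F_f(\tfrac12+it)|^2|\tfrac12+it|^{-2}\,dt$. This is not an Euler product, it is unbounded as $V_f\to0$, and it cannot be replaced by short Dirichlet polynomials.

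The paper, following Harper, proceeds as follows:
\begin{enumerate}
\item It applies the smooth partition of unity of Lemma~\ref{lemma:partition-of-unity} to the $2M+1$ short prime sums $S_m(f)$, where $|m|\le M=2\log^{1.02}P$ and $\log P\asymp(\log L_k)^{1/6}$.
\item It applies H\"older with respect to the induced measures $\E^{\mathbf j}$.
\item It Taylor-expands each $g_j$ to degree $2S-1$ and transfers the polynomial part by Petersson.
\item It controls the Taylor remainders with the twisted even-moment bound of Lemma~\ref{lemma: evenmomentlem-modular-1}.
\end{enumerate}
That twisted bound is the genuinely new input. Harper's analogue uses complete multiplicativity, which $\lambda_f$ lacks, so here it is proved inside the model via H\"older, the fourth-moment bound and Hoeffding, at the cost of a factor $\log(2x)$. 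Your Steps 1--4 (largest-prime decomposition and Parseval) are then carried out in the model, \emph{after} the transfer. That is where conditional orthogonality of $\X(n')$ over $P$-rough $n'$ holds; on the family side no such conditional orthogonality is available.

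Second, the corollary needs $x$ arbitrarily close to $k$ (with $k/x\to\infty$ slowly), and your plan gives no mechanism there. Because of the $\log(2x)$ loss, the transfer error $x\log x/(N\log P)^{2000(2M+1)\delta^{-2}}$ is acceptable only under the restriction $x\le k\exp(-(\log\log k)^2)$. For larger $x$ the paper uses a second estimate, Lemma~\ref{lemma: evenmomentlem-modular-2}. It bounds $Q(\lambda_f)$ trivially via Deligne and so has no $\log x$ factor, at the price of requiring $P^{5000(\log P)^{3.62}\sqrt P}<k/(100x)$. This yields the weaker but still divergent saving $\sqrt{\log\log(100\log(10k/x))}$ of Theorem~\ref{mainthm:low-moment-modular}.

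Finally, the multiplicative-chaos step is not where the difficulty lies. The right template is Harper's Rademacher case, not Steinhaus ``up to constants''. Since $\E\X(p)^2=1$, the tilted Euler-product moments contain $\cos(2t\log p)$ terms. Lemmas~\ref{lemma:mean-square-randomEulerproducts-upperbd} and~\ref{lemma:mean-square-randomEulerproducts-lowerbd} show these agree with Harper's Rademacher computations, so his barrier argument carries over directly.
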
 

We in fact prove the following quantitative results.
\begin{theorem}\label{mainthm:low-moment-modular-sharp}
Let $k$ be a large positive even integer.
Uniformly for $1 \leq x \leq k
\exp\!\left(-(\log\log k)^2\right)
$ and  $0 \leq q \leq 1$, we have
\begin{equation}\label{eq:mainthm-low-moment-modular-sharp}
\sumh_{f \in \mathcal{H}_k} \left|\sum_{n \leq x} \lambda_f(n)\right|^{2q} \ll \Biggl(\frac{x}{1 + (1-q)\sqrt{\log\log(10L_k)}} \Biggr)^q , \end{equation} 
where $L_k :=\min\left\{x,~ k/x\right\}.
$
\end{theorem}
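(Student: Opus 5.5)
We would follow Harper's strategy for character sums \cite{H23}, with the unitary orthogonality of characters replaced by the Petersson trace formula (Lemma \ref{lemma:orthogonality-holomorphic}) and the Steinhaus model replaced by the Cogdell--Michel model. In that model one takes independent Sato--Tate distributed angles $\theta_p$, sets $\mathbb{X}(p)=2\cos\theta_p$, and extends to $\mathbb{X}(p^j)=U_j(\cos\theta_p)$ (Chebyshev polynomials of the second kind), so that $\mathbb{X}$ is Hecke multiplicative. The first step is a reduction. For $q$ bounded away from $1$ the claim follows from the case $q$ near $2/3$ by H\"older's inequality, so we may take $2/3\le q\le 1$; and when $L_k$ is bounded the bound is the trivial one from the second moment. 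We may also assume $x$ is large. In that range we split $n\le x$ according to the largest prime factor $P(n)$. The $x$-smooth part with $P(n)\le x^{1/\log\log x}$ (or a similar threshold) contributes negligibly, by an $L^2$ bound together with Rankin's trick. For the remaining $n$ we write $n=mp$ with $p=P(n)>y$. Since Hecke multiplicativity is only twisted, we handle this with $\lambda_f(mp)=\lambda_f(m)\lambda_f(p)$ when $p\nmid m$, and absorb the terms with $p\mid m$ into an error term that the second moment controls.

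Next, group the primes $p$ into short intervals and apply H\"older's inequality conditionally. For each large prime $p$ the sum $\sum_{m\le x/p,\,P(m)<p}\lambda_f(m)$ is independent of $\lambda_f(p)$ in the model. In the family we use Petersson in place of independence: the $2q$-th moment is bounded by $\bigl(\sumh_f \sum_p |\sum_m\lambda_f(m)|^2\,\cdots\bigr)^q$, which is in turn bounded by
\[
\Bigl(\sumh_{f}\int_{-1/2}^{1/2}\Bigl|\frac{F_f(1/2+it)}{1/2+it}\Bigr|^2dt\Bigr)^q .
\]
To get this we apply Parseval (Plancherel for Dirichlet series) to $F_f(s)=\prod_{p\le x}L_p(s,f)$, the Euler product of $f$ over $x$-smooth numbers. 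The crucial input is then a \emph{critical multiplicative chaos} estimate:
\[
\sumh_f\Bigl(\int|F_f(1/2+it)|^2\,\tfrac{dt}{|1/2+it|^2}\Bigr)^q\ll\Bigl(\frac{\log x}{\sqrt{\log\log x}}\Bigr)^q .
\]
We would prove this by Harper's barrier-event argument. On the event that for all $j$ the partial Euler products $\prod_{p\le x^{e^{-j}}}|L_p(1/2+it,f)|^2$ stay below a random-walk barrier $\approx(\log x/ e^{j})\,e^{\text{(small)}}$, the conditional second moment gains the ballot factor $1/\sqrt{\log\log x}$. The complement of this event has small probability, and there we use H\"older's inequality.

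Transferring these probabilistic computations to the family requires computing moments of short Euler products $\prod_{p\le z}|L_p(1/2+it,f)|^{2}$, and of their truncated Taylor expansions, via Petersson. The orthogonality gives $\sumh_f\lambda_f(n)\lambda_f(m)=\delta_{n=m}+O(e^{-k})$-type errors only when $nm\le k^2/100$-ish, so all Dirichlet polynomials have to be kept of length well below $k$. This is where the hypothesis $x\le k\exp(-(\log\log k)^2)$ enters: the products over primes up to $x$, squared and with truncation parameters, must stay within reach of the orthogonality. The role of $L_k=\min\{x,k/x\}$ is that when $x$ is close to $k$ we can run the barrier argument only up to scale $k/x$, so the saving becomes $\sqrt{\log\log(k/x)}$. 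The step I expect to be the main obstacle is exactly this transfer: controlling $\sumh_f\prod_p|L_p(1/2+it,f)|^2\mathbf 1_{\text{barrier}}$ uniformly in $t$. The barrier indicators are not polynomial, so I would first replace them by smooth polynomial approximations in the prime sums $\sum_{p\in I_j}\lambda_f(p)p^{-1/2-it}$. For these approximations the Sato--Tate moments (with the $p^2$-terms, where $\lambda_f(p)^2=1+\lambda_f(p^2)$ shifts the mean) match the Cogdell--Michel model up to admissible errors.
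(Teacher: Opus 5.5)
\textbf{There is a genuine gap in your reduction step.} In the Cogdell--Michel model you may write $\E|\sum_{n\le x}\X(n)|^{2q}\le\E\bigl(\E[\,|\sum_{n\le x}\X(n)|^2\mid \X(p),\ p\le y\,]\bigr)^q$, because the large-prime variables are independent of the small ones. The family $\mathcal H_k$ with harmonic weights has no such product structure: the Petersson formula gives orthogonality only after averaging over the whole family.

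Applying H\"older to the whole family average, as in your display $\bigl(\sumh_f\sum_p|\sum_m\lambda_f(m)|^2\cdots\bigr)^q$, just returns the trivial bound $x^q$. The version with the $q$-th power inside $\sumh_f$, which your ``crucial input'' presupposes, does not follow. For a fixed $f$ the cross terms $\lambda_f(p)\lambda_f(p')$ in $|\sum_p\lambda_f(p)\sum_m\lambda_f(m)|^2$ do not vanish.

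The missing idea is Harper's deterministic substitute for conditioning, which is what the paper uses.
\begin{itemize}
\item The smooth partition of unity of Lemma~\ref{lemma:partition-of-unity} is applied to the real prime sums $S_m(f)$ over \emph{tiny} primes $p\le P$, with $\log P\asymp(\log L_k)^{1/6}$, at the $2M+1$ shifts $m/\log^{1.01}P$.
\item This gives probability measures $\E^{\mathbf j}W:=\sigma(\mathbf j)^{-1}\E^{\rm holo}W\prod_i g_{j(i)}(S_i(f))$ with $\E^{\rm holo}|\sum_{n\le x}\lambda_f(n)|^{2q}\le\sum_{\mathbf j}\sigma(\mathbf j)\bigl(\E^{\mathbf j}|\sum_{n\le x}\lambda_f(n)|^2\bigr)^q$. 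This is how the $q$-th power legitimately gets inside.
\item Proposition~\ref{prop:holomorphic-Eigenvalues-behave like-random-model} then shows that $\E^{\rm holo}\prod_i g_{j(i)}(S_i(f))|\sum_{n\le x}c(n)\lambda_f(n)|^2$ equals its random analogue up to a negligible error. The proof Taylor-expands the $g_j$, matches moments via Lemma~\ref{lemma: Eigenvalue like random model-holomorphic}, and bounds the Taylor remainders with the twisted second moment estimate of Lemma~\ref{lemma: evenmomentlem-modular-1}: $\E^{\rm holo}|\sum_{n\le x}c(n)\lambda_f(n)|^2|Q(\lambda_f)|^{2l}\ll x\log(2x)\,l!\,(320\log\log P)^l$.
\end{itemize}
That twisted moment estimate is exactly where the lack of complete multiplicativity must be handled, and it has no counterpart in your plan.

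\textbf{A second problem is that you run the barrier argument inside the family, with $F_f$ the Euler product over all $p\le x$.} Smoothed barrier conditions on partial products up to $x^{e^{-j}}$ become Dirichlet polynomials of length $x^{C}$ with $C\to\infty$. Once $x$ is a power of $k$, these lie far outside the range $mn\le k^2/10^4$ of Lemma~\ref{lemma:orthogonality-holomorphic}. Stopping the barrier at scale $k/x$ does not remove the primes up to $x$ from $F_f$.

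In the paper no barrier event is ever averaged over $\mathcal H_k$. After the transfer everything takes place in the model. There, Harper's passage to Euler products (his Section~3.4) uses that the weights depend only on $\X(p)$ for $p\le P$, while the $P$-rough parts of $n$ are orthogonal. So only $F_P^{\rm rand}$ over $p\le P$ appears, and Lemma~\ref{lemma: Multiplicative Chaos Result1} is a purely probabilistic input, giving the saving $\sqrt{\log\log P}\asymp\sqrt{\log\log L_k}$.

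Relatedly, the hypothesis $x\le k\exp(-(\log\log k)^2)$ does not come from lengths of Euler products. The length condition $xP^{32000((2M+1)/\delta)^2\log(N\log P)}<k/100$ only needs $L_k$ large. The hypothesis instead comes from the $\log(2x)$ loss in Lemma~\ref{lemma: evenmomentlem-modular-1}, which forces $\log x\le(N\log P)^{1000(2M+1)/\delta^2}$.
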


\begin{theorem}\label{mainthm:low-moment-modular}
Let $k$ be a large positive even integer.
Uniformly for any $ k
\exp\!\left(-(\log\log k)^2\right) \leq x \leq k$ and any $0 \leq q \leq 1$, we have
\begin{equation}\label{eq:mainthm:low-moment-modular}
\sumh_{f \in \mathcal{H}_k} \left|\sum_{n \leq x} \lambda_f(n)\right|^{2q} \ll \Biggl(\frac{x}{1 + (1-q)\sqrt{\log\log(100 \log(10k/x))}} \Biggr)^q .\end{equation} 
\end{theorem}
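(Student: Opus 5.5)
We prove Theorem \ref{mainthm:low-moment-modular} by reducing it to Theorem \ref{mainthm:low-moment-modular-sharp}, in the way Harper handles large $x$ for character sums. The point is that when $x$ is close to $k$, the quantity $\log\log(10L_k)$ is no longer the right saving. Only the primes below a threshold much smaller than $k/x$ can be handled by the Petersson formula, and the resulting saving is governed by $\log(10k/x)$. So we rerun the argument behind Theorem \ref{mainthm:low-moment-modular-sharp} with the multiplicative ``conditioning'' restricted to small primes.

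\textbf{Step 1: the trivial range.} If $(1-q)\sqrt{\log\log(100\log(10k/x))}\ll 1$, the claimed bound follows from the second-moment bound and H\"older's inequality, once we extend the Petersson computation to all $x\le k$. For $k/100\le x\le k$ the off-diagonal Kloosterman--Bessel terms are no longer exponentially small. We bound them using $J_{k-1}(y)\ll \min\{(y/k)^{k-1}, k^{-1/3}\}$ together with Weil's bound, or we simply replace the sharp cutoff $n\le x$ by a smoothed one and accept an $O(x)$ second moment. Hence we may assume $q\le 1-c$, or in general keep track of $1-q$.

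\textbf{Step 2: factorisation by the largest prime factor.} Write $n=mp$ with $p=P(n)$ the largest prime factor of $n$, and choose a parameter $y$ with $\log y\asymp \log(k/x)$, say $y=(k/x)^{c}$, but at least a large constant. We split $S_f(x)$ into the $y$-smooth part and the part with $P(n)>y$. The latter is grouped as
\[
\sum_{y<p\le x}\lambda_f(p)\sum_{m\le x/p,\;P(m)<p}\lambda_f(m),
\]
with a negligible correction from the $p^2\mid n$ terms and from the Hecke relations. For each $f$ we condition on the values $\lambda_f(\ell)$ for primes $\ell\le y$. We then apply H\"older's inequality, with the $2q$-th power taken over the large-prime part, and reduce to bounding
\[
\sumh_{f}\Bigl(\sum_{t}\bigl|F_y(f,1/2+it)\bigr|^2\frac{dt}{|1/2+it|^2}\Bigr)^{q},
\]
where $F_y(f,s)=\prod_{\ell\le y}L_\ell(f,s)$ is the Euler product over small primes. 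This is a Parseval/Perron step exactly as in Harper. The inner orthogonality over the large primes uses the Petersson formula via Lemma \ref{lemma:orthogonality-holomorphic}, and this is legitimate because the products involved have length at most about $x\cdot y^{O(1)}\le k/100$.

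\textbf{Step 3: the Euler-product moment.} Next we show that the $q$-th moment of the integral of $|F_y|^2$ is
\[
\ll \Bigl(\frac{\log y}{1+(1-q)\sqrt{\log\log y}}\Bigr)^q .
\]
Combined with the factor $x/\log y$ from the large-prime sum, this gives the theorem, since $\log\log y\asymp\log\log\log(10k/x)$. This is where we invoke the multiplicative-chaos machinery. We compare the moments of $F_y$ under the harmonic measure with those of the Cogdell--Michel random model, in which the $\lambda(\ell)$ are independent and Sato--Tate distributed. The comparison is made by expanding short Dirichlet polynomials (truncated $\exp$ of $\sum\lambda(\ell)\ell^{-s}$ over dyadic blocks) and matching their moments via Petersson, again with total length $\le k/100$. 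We then apply the random-model low-moment estimate, which is the ballot/barrier argument.

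\textbf{Main obstacle.} The hardest step is this comparison in Step 3. The random variables $\lambda_f(\ell)$ are Sato--Tate distributed rather than uniformly distributed on the unit circle. As a result, $\mathbb{E}|L_\ell|^2$ carries the extra bias $(1-\ell^{-1})$-type factors from $\lambda(\ell^2)$, and the orthogonality is only approximate, with errors of size $e^{-k}$ only while the length stays below $k/100$. Because of this length constraint, $y$ must be taken as small as $(k/x)^{c}$, which accounts for the triple logarithm in the final bound. To carry out the comparison, I would first establish the barrier-event estimates with the truncated polynomials, adapting Harper's Girsanov-type computation to the Sato--Tate measure.
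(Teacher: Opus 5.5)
There is a genuine gap. What you describe is the argument of Theorem~\ref{mainthm:low-moment-modular-sharp} with $L_k$ replaced by $k/x$. That argument is precisely what breaks down as $x$ approaches $k$, and your outline contains no ingredient to replace it.

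\textbf{The logarithms are miscounted.} With $y=(k/x)^c$ one has $\log\log y=\log\log(k/x)+O(1)$, not $\log\log\log(k/x)$. So as written you are claiming the sharp saving $\sqrt{\log\log(k/x)}$ up to $x\asymp k$. That is stronger than the theorem, and it is exactly the bound the paper cannot reach in this range. The triple logarithm in the statement comes from the fact that one can only condition on primes up to $P\approx\log(10k/x)$.

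\textbf{The Petersson lengths are not $xy^{O(1)}$.} To transfer $\E^{\rm holo}\prod_i g_{j(i)}(S_i(f))\,|\sum_{n\le x}\lambda_f(n)|^2$ to the random model (Proposition~\ref{prop:holomorphic-Eigenvalues-behave like-random-model}), you must Taylor-expand $Y\asymp\log^{1.02}P$ partition functions to a high degree $2S$. The lengths are therefore $xP^{2SY}$. More importantly, you must bound the Taylor remainders by twisted moments $\E^{\rm holo}|\sum_{n\le x}c(n)\lambda_f(n)|^2|Q(\lambda_f)|^{2S}$. Saying that moments are matched via Petersson with total length at most $k/100$ does not address these remainders, and they are the crux.

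\textbf{Why the remainders are the obstruction.} Since $\lambda_f$ is not completely multiplicative, the available Khintchine-type bound (Lemma~\ref{lemma: evenmomentlem-modular-1}) goes through H\"older and a fourth moment, and it loses a factor $\log(2x)$. After summing over the vectors $\mathbf j$, this loss is absorbed only if $\log\log x$ is at most a fixed power of $\log P$ (condition~\eqref{eq:condition-for-x-and-P-thm-holo2}). Meanwhile \eqref{eq:condition-for-x-and-P-thm-holo1} needs a larger fixed power of $\log P$ to lie below $\log(k/x)$. These conflict once $\log(k/x)$ is below a suitable power of $\log\log k$, which is why Theorem~\ref{mainthm:low-moment-modular-sharp} stops where it does.

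\textbf{The paper's fix.} The new input is Lemma~\ref{lemma: evenmomentlem-modular-2}: bound $|Q(\lambda_f)|\le 10\sqrt P/\log P$ pointwise by Deligne. A remainder then costs only the diagonal term $x$, with no $\log x$. The price is that the degree must be $S\asymp\delta^{-2}\sqrt P$, so the length condition behind \eqref{eq:holomorphic-Eigenvalues-behave like-random-model2} becomes $\sqrt P(\log P)^{O(1)}\ll\log(k/x)$. This forces $P$ to be at most a power of $\log(k/x)$; the paper takes $P\approx\log(10k/x)$. The resulting saving is $\sqrt{\log\log P}\asymp\sqrt{\log\log\log(k/x)}$.

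\textbf{Your named obstacle is not the real one.} Barrier estimates under the Sato--Tate law are in fact the part that carries over directly. Lemmas~\ref{lemma:mean-square-randomEulerproducts-upperbd}--\ref{lemma:mean-square-randomEulerproducts-high moment} show that the $\GL_2$ Euler-product moments have the Rademacher shape.

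\textbf{Step~1 is also not justified as written.} For $x\asymp k$, the bounds $J_{k-1}\ll k^{-1/3}$ and Weil give an off-diagonal term of size about $x^2k^{-1/3}$, not $O(x)$. Smoothing the cutoff instead changes the quantity being estimated.
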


\begin{remark}
According to the Vorono\"i summation formula (see \cite[Lemma 2.3]{C26a}), there is a duality between the Hecke eigenvalue sums of length $x$ and $k^2/x$.  We expect the following to hold:
uniformly for $1 \leq x \leq k^2
$ and  $0 \leq q \leq 1$, we have
\begin{equation}\label{eq:mainthm-low-moment-modular-conjecture}
\sumh_{f \in \mathcal{H}_k} \left|\sum_{n \leq x} \lambda_f(n)\right|^{2q} \ll \Biggl(\frac{x}{1 + (1-q)\sqrt{\log\log(100\mathcal{L}_k)}} \Biggr)^q , \end{equation} 
where $\mathcal{L}_k:=\min\left\{x,~ k^2/x\right\}
$ (see \cite[the discussion following Theorems~1 and~2]{H23}). Thus,
\eqref{hecke-betterthan-square-root} should remain valid when both $x$ and $k^2/x$ tend to infinity with $k$.
\end{remark}
The following corollary of Theorem \ref{mainthm:low-moment-modular-sharp} shows quantitatively that, for most \(f\in \mathcal{H}_k\) (with respect to harmonic measure), the sum of Hecke eigenvalues exhibits  ``better than square-root cancellation''.

\begin{corollary}\label{cor:quantitative-corollary}
Let $k$ be a large positive even integer. Let
\[\mathcal{E}=\left\{
    f\in \mathcal{H}_k:
    \left|\sum_{n\leq x}\lambda_f(n)\right|
    \geq \lambda\frac{\sqrt{x}}{(\log\log(10L_k))^{1/4}}
\right\}\]Then uniformly for  $1\leq x\leq k
\exp\!\left(-(\log\log k)^2\right)$
and any $\lambda\geq 2$, we have
\[
|\mathcal{E}|_h\ll
\frac{\min\left\{\log\lambda,\sqrt{\log\log(10L_k)}\right\}}
     {\lambda^2},
\]
where $L_k=\min\{x,k/x\}$.
\end{corollary}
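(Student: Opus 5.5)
This corollary should follow from Theorem \ref{mainthm:low-moment-modular-sharp} by Markov's inequality applied to the $2q$-th moment, with $q$ chosen to depend on $\lambda$. Write $V:=\sqrt{\log\log(10L_k)}$ and $T:=\lambda\sqrt{x}/V^{1/2}$, so that $\mathcal{E}=\{f:|S_f(x)|\geq T\}$. Since the weights $\omega_f$ are positive, for any $0\le q\le 1$ we have
\[
|\mathcal{E}|_h\le T^{-2q}\sumh_{f\in\mathcal{H}_k}|S_f(x)|^{2q}\ll \frac{V^{q}}{\lambda^{2q}x^{q}}\Biggl(\frac{x}{1+(1-q)V}\Biggr)^q=\frac{1}{\lambda^{2q}}\Biggl(\frac{V}{1+(1-q)V}\Biggr)^q .
\]
This is uniform in $q$ because Theorem \ref{mainthm:low-moment-modular-sharp} is uniform in $q\in[0,1]$ and in $x$ in the stated range.

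The proof then reduces to optimizing over $q$, with two cases.

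\emph{Case $\log\lambda\geq V$.} Take $q=1$. Then the bound is simply $V/\lambda^2$, which matches $\min\{\log\lambda,V\}/\lambda^2=V/\lambda^2$.

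\emph{Case $\log\lambda<V$.} Take $1-q=1/\log\lambda$; this lies in $(0,1/\log 2]$ since $\lambda\ge2$. If $\log 2\le 1-q\le1$ one can instead just take $q=1-1/\log 2$ or handle bounded $\lambda$ trivially, because then $\lambda\ll1$ and the claim $|\mathcal E|_h\ll 1$ is trivial. With this choice,
\[
\frac{V}{1+(1-q)V}\le\frac{1}{1-q}=\log\lambda ,
\]
so the bound becomes $\lambda^{-2q}(\log\lambda)^{q}\le \lambda^{-2}\lambda^{2(1-q)}\log\lambda$. Since $\lambda^{2(1-q)}=\lambda^{2/\log\lambda}=e^{2}$, this gives $\ll \log\lambda/\lambda^2$, and this is $\min\{\log\lambda,V\}/\lambda^2$ in this case.

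The only mildly delicate point is the bookkeeping: checking that $\lambda^{2(1-q)}$ stays bounded for the chosen $q$, and treating small $\lambda$ (say $\lambda\le e$), where one can use $q=1$ or the trivial bound $|\mathcal{E}|_h\le|\mathcal{H}_k|_h\ll1\ll\log\lambda/\lambda^2$. No genuine obstacle is expected beyond Theorem \ref{mainthm:low-moment-modular-sharp} itself.
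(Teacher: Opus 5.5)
Your argument is correct and is essentially the paper's proof: Markov's inequality combined with Theorem \ref{mainthm:low-moment-modular-sharp}, then $q=1$ for the $\sqrt{\log\log(10L_k)}$ bound and $q$ slightly below $1$ for the $\log\lambda$ bound. The only difference is that the paper takes $1-q=1/(2\log\lambda)$, which lies in $(0,1)$ for every $\lambda\geq 2$ and so avoids your separate trivial treatment of $2\leq\lambda<e$; note also that your condition there should read $1<1-q\leq 1/\log 2$, not $\log 2\leq 1-q\leq 1$.
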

We remark that in an independent work, Hamdan, Leung  and Wong \cite{HLW26} proved the “better than squareroot cancellation” phenomenon for sums of Hecke eigenvalues of holomorphic cusp forms, but in a different family, where they fix the weight $k$ and let the level $N$ grow.
\medskip

We next consider the  orthogonal family of Hecke--Maass
cusp forms for $\mathrm{SL}_2(\mathbb{Z})$. 
Let $\{u_j\}_{j\geq1}$ be an orthonormal basis of Hecke--Maass cusp forms
for $\mathrm{SL}_2(\mathbb{Z})$. Let $
\frac{1}{4}+t_j^2
$ with $t_j>0$ be the Laplace eigenvalue of $u_j$. For each \(u_j\), let \(\lambda_j(n)\)  denote the corresponding eigenvalue of the normalized Hecke operator \(T_n\) $(n\geq 1)$. Each $u_j$  has the following Fourier expansion:
\[
u_j(x+iy)
=
\sum_{n\neq 0}
\rho_j(n)\sqrt{y}\,K_{it_j}(2\pi |n|y)e(nx),
\]
where $K_{\nu}(x)$ is the $K$-Bessel function. We have
\(
\rho_j(\pm n)=\rho_j(\pm1)\lambda_j(n)\)
$(n\geq1)$
and
\(
|\rho_j(\pm 1)|^2
=
\frac{2\cosh(\pi t_j)}
{L(1,\operatorname{sym}^2u_j)}.
\)
\begin{theorem}\label{mainthm:low-moment-maass}
Let \(T\) be a large real number.  Uniformly for \(1\leq x\leq T^{0.999}\) and
\(0\leq q\leq 1\), we have
\begin{equation}\label{eq: low-moment-maass}
        \frac{12}{T^2}\sum_{0\leq t_j\leq T}
        \frac{\zeta(2)}{L(1,\mathrm{sym}^2 u_j)}\left|
        \sum_{n\leq x}\lambda_j(n)
        \right|^{2q}
        \ll
        \left(
        \frac{x}
        {1+(1-q)\sqrt{\log\log(100x)}}
        \right)^q.
\end{equation}
\end{theorem}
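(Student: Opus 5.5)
The plan is to adapt Harper's method for character sums to the Maass family. The role of orthogonality of characters is played by the Kuznetsov trace formula with weight $h(t)$ concentrated on $[0,T]$. We compare harmonic moments of polynomials in $\lambda_j(p)$ for $p\le x$ with the Cogdell--Michel model. In that model $\lambda_j(p)$ is replaced by $X_p=2\cos\theta_p$, where the $\theta_p$ are independent and Sato--Tate distributed, and $\lambda_j(n)$ is replaced by the multiplicative function $X(n)$ satisfying the Hecke relations, so that $X(p^a)=U_a(\theta_p)$ (Chebyshev polynomials).

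\emph{Step 1 (orthogonality).} For $m,n\le T^{1.999-\epsilon}$, Kuznetsov gives
\[
\frac{12}{T^2}\sum_{t_j\le T}\frac{\zeta(2)}{L(1,\mathrm{sym}^2u_j)}\lambda_j(m)\lambda_j(n)=\delta_{m=n}+\text{small}.
\]
A smooth weight can replace the sharp cutoff, since positivity allows majorising. The Kloosterman term is controlled by Weil's bound together with Bessel decay when $\sqrt{mn}\ll T^{2-\epsilon}$. This small-error range is larger than what is needed for $x\le T^{0.999}$, where we only need products of length about $x\cdot x^{o(1)}$.

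\emph{Step 2 (Harper's conditioning argument).} As in \cite{H23}, we reduce to $q=2/3$-type bounds via Hölder. We then write $\sum_{n\le x}\lambda_j(n)$ according to the largest prime factor $P(n)$. Each $n$ with $P(n)>\sqrt{x}$, say, is $n=mp$, while the smooth part has negligible second moment. Using Hecke multiplicativity, this gives
\[
\sum_{n\le x}\lambda_j(n)\approx\sum_{p}\lambda_j(p)\sum_{m\le x/p,\,P(m)<p}\lambda_j(m).
\]
Next we apply Parseval in the multiplicative form, splitting $p$ into intervals and computing the conditional second moment over $\lambda_j(p)$ for large $p$. This reduces to bounding
\[
\mathbb{E}\Bigl[\Bigl(\frac1{\log x}\int_{-1/2}^{1/2}\bigl|F_j(1/2+\tfrac{k}{\log x}+it)\bigr|^2dt\Bigr)^q\Bigr],
\]
where $F_j(s)=\prod_{p\le x^{1/e}}L_p(s,u_j)$ is an Euler product over small primes.

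\emph{Step 3 (Euler product moments).} The weighted average of $|F_j|^{2}$-type Euler products is transferred to the random model by expanding into Dirichlet polynomials of bounded length $x^{O(1)}$. One can truncate by Rankin's trick or by working with the Hecke local factors and using Step 1 prime by prime. This is the step where $x\le T^{0.999}$ is used: products of short Euler factors must remain shorter than the Kuznetsov range. In the model, $\log|L_p(1/2+it)|^{2}\approx 2X_p\cos(t\log p)/\sqrt p$, which has mean zero and the same variance structure as in the character case, apart from harmless $X_{p^2}$ corrections. These corrections have mean $-1/p$ in the orthogonal family, contributing a bounded factor after accounting for the $p^{-1}$ weighting. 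Harper's Girsanov-type ballot/random-walk computation then yields the $1/\sqrt{\log\log x}$ saving, and hence $(x/(1+(1-q)\sqrt{\log\log 100x}))^q$.

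The main obstacle is Step 3. Harper's argument requires computing tilted expectations $\mathbb{E}[|F(s)|^2\mathbf 1_{\text{barrier event}}]$ in which each Euler factor is conditioned on partial-product events. Each event has to be approximated by polynomials in $\lambda_j(p)$ of controlled degree, for example smoothed indicators approximated by short truncated Taylor series as in Harper's treatment of $t$-averages. The approximations must remain accurate under the Kuznetsov weight. I would first prove the theorem for the random model, and then establish a moment-matching lemma. That lemma would show that averages of $\prod_{p}P_p(\lambda_j(p))$, where the $P_p$ are polynomials with $\prod p^{\deg P_p}\le T^{1.9}$, equal model expectations up to $O(T^{-\delta})$. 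Finally, the barrier events would be bounded using high-moment Markov inequalities, as in \cite{H23}.
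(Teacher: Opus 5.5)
Your outer frame matches the paper: reducing to $2/3\le q\le 1$, majorising the sharp cutoff by the weight $e^{-t_j/T}$, and proving the $\mathrm{GL}_2$ model bound first. However, there is a genuine gap in how you get from the family to Euler products.

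In Step~2 you take ``the conditional second moment over $\lambda_j(p)$ for large $p$'' and arrive at a family average of $\bigl(\frac{1}{\log x}\int|F_j|^2\bigr)^q$, with $F_j$ an Euler product over $p\le x^{1/e}$. Integrating out the large primes needs independence of the local data at distinct primes, and that exists only in the Cogdell--Michel model. In the family, with the exponent $q<1$ outside the average, there is nothing to condition on, and no orthogonality can be used inside $|\cdot|^{2q}$.

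Step~3 then proposes to move $|F_j|^2$ and barrier events for partial products over $p\le x^{1/e}$ to the model through polynomials with $\prod p^{\deg P_p}\le T^{1.9}$. This cannot work when $x$ is a power of $T$. The relevant prime sums have variance $\asymp\log\log x$, so accurate Taylor approximations of smoothed indicators, or of $|F_j|^2$, need degree tending to infinity. Compare the requirement $S\ge\log\log P$ on the Taylor degree in the paper, together with the further factor $Y(1/\delta)^2$. The resulting Dirichlet polynomials have length $x^{D}$ with $D\to\infty$, far beyond $T^{1.9}$.

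The missing idea is Harper's derandomization with a \emph{tiny} conditioning range, after which everything delicate happens in the model.
\begin{enumerate}
\item The paper takes $P$ with $\log P\asymp(\log L'_T)^{1/6}$, where $L'_T=\min\{x^{1/\log\log x},T^{0.9995}/x\}$. Then still $\log\log P\asymp\log\log x$, so no saving is lost.
\item It inserts the smooth partition of unity $\prod_{|m|\le M}g_{j(m)}(S_m)$. Here $S_m$ is the real part of the prime sum over $p\le P$ (with the $p^2$ correction) at height $m/\log^{1.01}P$.
\item It applies H\"older cell by cell. This leaves only conditioned \emph{second} moments of $\sum_{n\le x,\,P(n)>x^{1/\log\log x}}\lambda_j(n)$.
\item After Taylor-expanding the $g_j$, these involve products of Hecke eigenvalues of total length $x^2T^{o(1)}$. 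The polynomial part is matched with the model via Kuznetsov. The Taylor remainders are controlled by a twisted even-moment bound for $|\sum_{n\le x}c(n)\lambda_j(n)|^2|Q(\lambda_j)|^{2l}$.
\item Only then, inside the model, are the primes $>P$ integrated out. This gives $\frac{x}{\log P}\int|F^{\mathrm{rand}}_P(\frac12+it)|^2|\frac12+it|^{-2}\,dt$, with an Euler product over $p\le P$ only.
\item The barrier/multiplicative-chaos analysis is then carried out purely probabilistically. No barrier event is ever transferred to the family.
\end{enumerate}

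Two smaller corrections. First, the restriction $x\le T^{0.999}$ does not come from the length range of Kuznetsov. The off-diagonal term is $O(T(mnT)^{\epsilon}+(mn)^{1/4+\epsilon})$ against the main term $T^2/6$, so the constraint comes from the size of the error: the relative error is $T^{-1+\epsilon}$ per pair. Summing over the $\asymp x^2$ pairs from $|\sum_{n\le x}|^2$ gives $x^2T^{-1+\epsilon}$ against a main term $x$. A matching lemma with error merely $O(T^{-\delta})$ per product would therefore not suffice.

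Second, the $\sqrt{x}$-smooth part is not negligible, since $\Psi(x,\sqrt{x})\sim(1-\log 2)x$. The cutoff must be $x^{1/\log\log x}$, where $\Psi(x,x^{1/\log\log x})\ll x(\log x)^{-c\log\log\log x}$.
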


\begin{remark}
Let \(N(T):=\sum_{t_j\leq T}1\) denote the number of Hecke--Maass eigenforms with spectral
parameter \(t_j \leq T\). By Weyl's law,
\[
    N(T) \sim \frac{T^2}{12}.
\]
%In other words, there are about
% \frac{1}{12}T^2 
%
%linearly independent $eigenforms with $eigenvalue $t_j\leq T$.
  In \cite{L11}, 
  Xiaoqing Li proved the following weighted Weyl law: $$\sum_{t_j\leq T}
        \frac{\zeta(2)}{L(1,\mathrm{sym}^2 u_j)}=\frac{T^2}{12}+O(T).$$
Therefore, \eqref{eq: low-moment-maass} is equivalent to \begin{equation}\label{eq: low-moment-maass-equivalent}
        \frac{1}{\sum_{t_j\leq T}
        \frac{\zeta(2)}{L(1,\mathrm{sym}^2 u_j)}}\sum_{t_j\leq T}
        \frac{\zeta(2)}{L(1,\mathrm{sym}^2 u_j)}\left|
        \sum_{n\leq x}\lambda_j(n)
        \right|^{2q}
        \ll
        \left(
        \frac{x}
        {1+(1-q)\sqrt{\log\log(100x)}}
        \right)^q .
\end{equation}
\end{remark}
\medskip

\noindent\emph{Symplectic families}
 Now we briefly discuss moments of quadratic character sums.
One might expect that a similar ``better than squareroot cancellation" should happen in quadratic character families. Namely 
\begin{equation}\label{mainthm-quadratic}
        \frac{1}{D}
        \sideset{}{^{\flat}}\sum_{|d|\leq D}
        \left|
    \sum_{\substack{n\leq x\\ n\ \mathrm{squarefree}}}
        \chi_d(n)
        \right|^{2q}
        \ll
        \left(
        \frac{x}
        {1+(1-q)\sqrt{\log\log(100x)}}
        \right)^q,
\end{equation}
where $0 \leq q \leq 1$,  $\chi_d(\cdot):=(\frac{d}{\cdot})$ denotes the Kronecker symbol and $\sum^{\flat}$ denotes summation over fundamental discriminants.
Similar to our previous results, the relative size between the conductor $D$ and summation length $x$ is important here. Roughly speaking, the smaller $x$ is relative to $D$,
the more the character sums behave like random sums
and the easier it would be to establish the above estimates.  We can show that \eqref{mainthm-quadratic} holds for $x\ll D^{4/11-\varepsilon}$ unconditionally and $x\ll D^{1/2-\varepsilon}$ under GRH using the probabilistic model introduced by Granville-Soundararajan \cite{GS03}. We note that in a forthcoming work, Nosal is able to achieve such a result unconditionally for the range $x\ll D^{1/2-\epsilon}$ in the prime conductor setting. We believe his proof should be able to be extended to the fundamental discriminant case as well. We skip our proof of \eqref{mainthm-quadratic} in view of his stronger results.        

\subsection{Probabilistic models and remarks on the proof}
The proofs of Theorems \ref{mainthm:low-moment-modular-sharp}, \ref{mainthm:low-moment-modular},
\ref{mainthm:low-moment-maass} adapt the method 
developed by Harper in \cite{H23}. Harper's treatment of the low moments of character sums in
\cite{H23} builds on his earlier work \cite{H20} on the low moments of
Steinhaus random multiplicative functions. One may view his proof as a derandomization strategy. Roughly speaking, Harper transfers the character sum averages,
taken over all characters \(\chi \bmod r\), to certain
averages of the Steinhaus random multiplicative function. This allows him to apply
 \cite[Multiplicative Chaos Result 1]{H23} arising in the proof of the upper bound for low moments of Steinhaus random multiplicative functions. More precisely, the
input needed is an upper bound for the low moments of integrals of random Euler
products over short intervals, which was obtained using the probabilistic theory of
critical multiplicative chaos. Thus, in order to prove our results, we
shall look for suitable probabilistic models, which can model the behavior of Hecke eigenvalues. 

For Theorems \ref{mainthm:low-moment-modular-sharp}, \ref{mainthm:low-moment-modular} and
\ref{mainthm:low-moment-maass}, we use the probabilistic model introduced by Cogdell
and Michel \cite{CM04} in their study of complex moments at $s=1$ of
symmetric power $L$-functions associated with holomorphic cusp forms.
Let \((\theta(p))_{p}\) be a sequence of
independent random variables,
each taking values in \([0,\pi]\) and distributed
with respect to the Sato--Tate measure 
\(
\mathrm{d}\mu_{\mathrm{s t}}(\theta)
        :=
        \frac{2}{\pi}\sin^2\theta\,\mathrm{d}\theta.
\)
For each prime \(p\) and integer \(a\geq 0\), set
\[
        \mathbb{X}(p):=2\cos\theta(p),
\]
and
\[
        \mathbb{X}(p^a)
        :=
        \sum_{j=0}^{a} e^{i(a-2j)\theta(p)}
        =
        \frac{\sin((a+1)\theta(p))}{\sin\theta(p)}.
\]
We define 
\[
        \mathbb{X}(n):=\prod_{p^a\Vert n}\mathbb{X}(p^a).
\]
Note that
 we have (see Lamzouri \cite[Lemma 3.2]{L19})
\begin{align*}      \mathbb{E}\bigl(\mathbb{X}(m)\mathbb{X}(n)\bigr)
        =
        \mathbf{1}_{m=n}.
\end{align*}
We determine the order of magnitude of the low moments of this
\(\GL_2\) random multiplicative function. 

\begin{theorem}\label{mainthm:low-moment-random-GL2}
Let \(\mathbb{X}(n)\) be the \(\GL_2\) random multiplicative function defined
above.  Uniformly for all large \(x\) and all \(0\leq q\leq 1\), we have
\[
        \mathbb{E}\left|
        \sum_{n\leq x}\mathbb{X}(n)
        \right|^{2q}
        \asymp
        \left(
        \frac{x}{1+(1-q)\sqrt{\log\log x}}
        \right)^q .
\]
\end{theorem}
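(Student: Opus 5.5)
We adapt Harper's argument for Steinhaus random multiplicative functions \cite{H20} to the $\GL_2$ model $\mathbb{X}$. The key structural fact is that the Euler factor of $\mathbb{X}$ at $p$ is
\[
\sum_{a\geq0}\frac{\mathbb{X}(p^a)}{p^{as}}=\Bigl(1-\frac{e^{i\theta(p)}}{p^s}\Bigr)^{-1}\Bigl(1-\frac{e^{-i\theta(p)}}{p^s}\Bigr)^{-1}.
\]
On the critical line, $\log|F(1/2+it)|$ is therefore driven by $\sum_p 2\cos\theta(p)\cos(t\log p)/p^{1/2}$, up to a bounded correction. Because $\mathbb{E}\,\mathbb{X}(p)=0$ and $\mathbb{E}\,\mathbb{X}(p)^2=1$, this is a Gaussian-like field with covariance $\sum_p \cos(t\log p)\cos(t'\log p)/p$. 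Its structure is the same as in the Steinhaus case, up to a real-symmetric factor: $t$ and $-t$ are perfectly correlated, which only affects the region $|t|\ll 1$. Hence the random Euler products $F_y(s)=\prod_{p\le y}(\cdots)$ have the same critical multiplicative chaos behaviour.

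\textbf{Upper bound.} We follow \cite{H20} step by step.

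1. By Cauchy--Schwarz/H\"older we may restrict to $n\le x$ with a prime factor larger than $x^{e^{-K}}$ (the remaining terms give smaller contribution). We then condition on the small primes and write $\sum_{n\le x}\mathbb{X}(n)=\sum_{p}\mathbb{X}(p)\sum_{m\le x/p}\mathbb{X}(m)$ over large $p$, roughly over dyadic/multiplicative blocks.

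2. Given the small-prime $\sigma$-algebra, the large-prime variables are independent, mean zero, and orthogonal. So the conditional $2q$-th moment is at most the $q$-th power of the conditional variance $\sum_p|\sum_{m\le x/p}\mathbb{X}(m)|^2$. Smoothing and Parseval (Harper's Lemma) bound this by
\[
\frac{x}{\log x}\int_{-1/2}^{1/2}|F_{x}(1/2+\tfrac{k}{\log x}+it)|^2\,dt,
\]
with the usual decomposition into pieces according to the size of the prime factors.

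3. We then need the low-moment multiplicative chaos estimate
\[
\mathbb{E}\Bigl(\int_{-1/2}^{1/2}|F(1/2+\sigma+it)|^2dt\Bigr)^q\ll\Bigl(\frac{1/\sigma}{1+(1-q)\sqrt{\log(1/\sigma)}}\Bigr)^q,
\]
i.e.\ the analogue of \cite[Multiplicative Chaos Result 1]{H23} for $\mathbb{X}$. We prove it by Harper's barrier argument. Condition on the event that partial sums $\sum_{p\le x^{e^{-j}}}$ of $\log|F|$ at each $t$ stay below a linear barrier plus a Gaussian-random-walk deviation, and use a ballot-type estimate for the resulting random walk.

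The inputs are two-dimensional Gaussian approximation estimates for $\mathbb{E}\prod_p|1-e^{i\theta}p^{-s}|^{-2}\cdots$. These come from computing $\mathbb{E}\,\exp(\sum \text{linear combination of } \cos\theta(p))$ under Sato--Tate by Taylor expansion, where the mean-zero, variance-one property makes the terms match the Steinhaus computation to $1+O(p^{-3/2})$. Handling the $t\approx0$ symmetry costs only constants.

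\textbf{Lower bound.} We follow \cite{H20}'s lower bound: reduce via the same conditioning to $\mathbb{E}(\int|F|^2)^q$ from below. A Girsanov-type tilt shows that the random walk stays below the barrier with probability $\gg 1/\sqrt{\log\log x}$, giving the matching $(1-q)\sqrt{\log\log x}$ factor. For $q$ near $1$, the bound $\gg x^q$ follows from Hölder together with a fourth-moment upper bound, using $\mathbb{E}|\sum\mathbb{X}(n)|^2=\lfloor x\rfloor$.

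The main obstacle is step 3 and its lower-bound counterpart. The Euler factors are real, not rotation-invariant, so the Gaussian comparison lemmas (Harper's Euler product results for Steinhaus) must be redone for Sato--Tate variables with explicit uniform error terms, including the correlations between $t$ and $t'$ close to $-t$. I would first prove a general lemma: for independent bounded mean-zero variance-one $\mathbb{X}(p)$, $\mathbb{E}\prod_{p}|F_p(\sigma+it)|^{2a}|F_p(\sigma+it')|^{2b}$ equals the Gaussian prediction up to factor $e^{O(1)}$. Everything else is then a transcription of \cite{H20}.
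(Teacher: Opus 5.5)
Your overall plan matches the paper's: transplant Harper's argument to $\mathbb{X}$, reduce to Euler products by conditioning on small primes (orthogonality for the upper bound, a Khintchine-type inequality for the lower bound), and supply $\GL_2$ mean-value formulas for the Euler products as the only new input. Two parts of your sketch need correcting, and the second leaves a real hole.

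First, the per-prime statistics of $\mathbb{X}$ do \emph{not} match the Steinhaus computation to $1+O(p^{-3/2})$. Put $R_p(t)=-\Re\log\bigl(1-\mathbb{X}(p)p^{-1/2-it}+p^{-1-2it}\bigr)$. Then $\mathbb{E}R_p(t)=-\cos(2t\log p)/(2p)+O(p^{-3/2})$ and $\mathbb{E}R_p(t)R_p(t')=\cos(t\log p)\cos(t'\log p)/p+O(p^{-3/2})$. For Steinhaus, by contrast, the mean is $0$ and the covariance is $\cos((t-t')\log p)/(2p)$. Summed over $p\le x$, the discrepancy in the mean is $\frac12\log\min\{\log x,1/|t|\}+O(1)$. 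This is unbounded on the integration range $|t|\le 1/2$, so it is not just a matter of constants near $t=0$. These values are exactly the Rademacher ones. That is why the paper transplants Harper's Rademacher proof (his Theorem~2), where this drift and the $t\leftrightarrow -t$ correlation are already built in. Also, the ballot-type steps need the characteristic-function formulas of Lemmas~\ref{lemma:mean-square-randomEulerproducts-upperbd} and \ref{lemma:mean-square-randomEulerproducts-lowerbd}: complex exponents, errors $O((1+|u|^3)/(\sqrt{x}\log x))$, and derivative bounds. An $e^{O(1)}$ comparison would not suffice there.

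Second, your lower bound does not deliver the uniformity in $q$. Suppose the tilted walk respects the barrier with probability $\gg 1/\sqrt{\log\log x}$. Through $\mathbb{E}Y^q\geq(\mathbb{E}Y)^{2-q}/(\mathbb{E}Y^2)^{1-q}$ this yields only $\gg(x/\sqrt{\log\log x})^q$, short of the target by a factor $(1-q)^{-q}$. Your fallback ``for $q$ near $1$'' cannot close this. Every coefficient in the Hecke expansion of $(\sum_{n\le x}\mathbb{X}(n))^2$ is nonnegative, so
\[
\mathbb{E}\Bigl|\sum_{n\leq x}\mathbb{X}(n)\Bigr|^{4}\geq\#\{m_1,m_2,m_3,m_4\leq x:\ m_1m_2=m_3m_4\}\gg x^2\log x .
\]
Interpolating between the second and fourth moments therefore gives only $\gg x^q(\log x)^{-O(1-q)}$. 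That is $\gg x^q$ only for $1-q\ll 1/\log\log x$, so the range $1/\log\log x\ll 1-q\ll 1$ is left open. What is missing is a $q$-dependent barrier: raise it by $B\asymp\min\{1/(1-q),\sqrt{\log\log x}\}$. The restricted first moment then gains a factor $B$, while the restricted second moment grows only by $e^{O(B)}$, which costs $e^{O(B(1-q))}=O(1)$ in the inequality above. For $1-q\leq 1/\sqrt{\log\log x}$ you can then use monotonicity of $q\mapsto\bigl(\mathbb{E}|\sum_{n\le x}\mathbb{X}(n)|^{2q}\bigr)^{1/(2q)}$ from $q_0=1-1/\sqrt{\log\log x}$, with no fourth moment needed.
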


Theorem \ref{mainthm:low-moment-random-GL2} is a \(\GL_2\) analogue of the remarkable results of Harper \cite{H20} concerning the low moments of  Steinhaus and
Rademacher  random multiplicative functions.\footnote{It is not surprising that in \cite{HLW26}, they also independently established the same result for the random model (at least the upper bound), as they also use Harper’s derandomization strategy and this is a necessary first step.}
Harper's result on Steinhaus   random multiplicative functions implies
\[
\lim_{T\to\infty}\frac{1}{T}\int_0^T
\left|\sum_{n\leq x} n^{-it}\right|^{2q}\,dt
=
\lim_{\substack{r\to\infty\\ r\ \mathrm{prime}}}
\frac{1}{r-1}
\sum_{\chi \bmod r}
\left|\sum_{n\leq x}\chi(n)\right|^{2q}\asymp
        \left(
        \frac{x}{1+(1-q)\sqrt{\log\log x}}
        \right)^q,
\]
since if $\X(n)$ is a Steinhaus   random multiplicative function, then
\[
\mathbb{E}\left|\sum_{n\leq x} \X(n)\right|^{2q}
=
\lim_{T\to\infty}\frac{1}{T}\int_0^T
\left|\sum_{n\leq x} n^{-it}\right|^{2q}\,dt
=
\lim_{\substack{r\to\infty\\ r\ \mathrm{prime}}}
\frac{1}{r-1}
\sum_{\chi \bmod r}
\left|\sum_{n\leq x}\chi(n)\right|^{2q}.
\]
Similarly, we have the following consequences for the limiting behaviour of sums of  Hecke eigenvalues.
\begin{corollary}\label{Cor:limiting-behavior-of-sums of-Hecke-eigenvalues}
Uniformly for all large fixed  \(x\) and all \(0\leq q\leq 1\), we have
\begin{equation}\label{eq:limiting-behavior-of-sums of-holomorphic-Hecke-eigenvalues}
\lim_{\substack{k\to\infty\\2\mid k}} 
\sumh_{f \in \mathcal{H}_k}
\left| \sum_{n \leq x} \lambda_f(n) \right|^{2q}
\asymp\left(\frac{x}{1+(1-q) \sqrt{\log \log x}}\right)^q 
.
\end{equation}
\begin{equation}\label{eq:limiting-behavior-of-sums of-Maass-Hecke-eigenvalues}
\lim_{T \to \infty } 
\frac{12}{T^2}\sum_{t_j\leq T}
        \frac{\zeta(2)}{L(1,\mathrm{sym}^2 u_j)}\left|
        \sum_{n\leq x}\lambda_j(n)
        \right|^{2q}
\asymp\left(\frac{x}{1+(1-q) \sqrt{\log \log x}}\right)^q 
.
\end{equation}
\end{corollary}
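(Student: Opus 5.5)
Since $x$ is fixed, the plan is to reduce the corollary to Theorem \ref{mainthm:low-moment-random-GL2} by showing that, in the limit, the harmonic average over $\mathcal{H}_k$ (and the weighted average over Maass forms) of $\bigl|\sum_{n\le x}\lambda_f(n)\bigr|^{2q}$ equals $\mathbb{E}\bigl|\sum_{n\le x}\mathbb{X}(n)\bigr|^{2q}$. This reduces to the joint equidistribution of the finitely many angles $\theta_f(p)$, $p\le x$, with respect to the product Sato--Tate measure. Writing $\lambda_f(p)=2\cos\theta_f(p)$ with $\theta_f(p)\in[0,\pi]$ (Deligne's bound for holomorphic forms), Hecke multiplicativity gives
\[
\sum_{n\le x}\lambda_f(n)=F\bigl((\theta_f(p))_{p\le x}\bigr),
\qquad
\sum_{n\le x}\mathbb{X}(n)=F\bigl((\theta(p))_{p\le x}\bigr),
\]
where $F(\boldsymbol\theta)=\sum_{n\le x}\prod_{p^a\Vert n}\frac{\sin((a+1)\theta_p)}{\sin\theta_p}$ is the same trigonometric polynomial, continuous on $[0,\pi]^{\pi(x)}$. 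Consequently $G=|F|^{2q}$ is a bounded continuous function on this compact torus-quotient.

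\textbf{Step 1 (Petersson, holomorphic case).} For fixed integers $m,n$, Lemma \ref{lemma:orthogonality-holomorphic} gives $\sumh_{f}\lambda_f(m)\lambda_f(n)=\mathbf 1_{m=n}+O_{m,n}(e^{-k})$; in particular $\sumh_f\lambda_f(m)\to\mathbf 1_{m=1}=\mathbb{E}\,\mathbb{X}(m)$. Every product $\prod_{p\le x}U_{a_p}(\cos\theta_p)$ of Chebyshev polynomials equals $\mathbb{X}(m)$ with $m=\prod p^{a_p}$, and these span all polynomials in $(\cos\theta_p)_{p\le x}$. Hence, for every polynomial $P$, the $\omega_f$-average of $P(\theta_f)$ tends to $\mathbb{E}P(\theta)$. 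Since $|\mathcal H_k|_h\to1$ and the weights $\omega_f$ are positive, Stone--Weierstrass (polynomials in the $\cos\theta_p$ are dense in $C([0,\pi]^{\pi(x)})$) yields weak convergence, and therefore $\sumh_f G(\theta_f)\to\mathbb{E}\,G(\theta)$. The $x$-dependent constants are harmless because $x$ is fixed before $k\to\infty$. The limit therefore exists and equals $\mathbb{E}\bigl|\sum_{n\le x}\mathbb{X}(n)\bigr|^{2q}$, and Theorem \ref{mainthm:low-moment-random-GL2} gives \eqref{eq:limiting-behavior-of-sums of-holomorphic-Hecke-eigenvalues}. (For $x$ small the right side is $\asymp x^q$ and the comparison is immediate, since the statement is for large $x$.)

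\textbf{Step 2 (Maass case), the main obstacle.} Here Ramanujan is not known, so $\theta_j(p)$ may be complex: $\lambda_j(p)=\alpha+\alpha^{-1}$ with $|\lambda_j(p)|\le 2p^{7/64}$. The Kuznetsov formula, with the weight $\zeta(2)/L(1,\mathrm{sym}^2u_j)\propto|\rho_j(1)|^2/\cosh(\pi t_j)$ and a test function approximating $\mathbf 1_{[0,T]}$, gives
\[
\frac{12}{T^2}\sum_{t_j\le T}\frac{\zeta(2)}{L(1,\mathrm{sym}^2u_j)}\lambda_j(m)\lambda_j(n)\to\mathbf 1_{m=n}
\]
for fixed $m,n$; this is the same input the paper presumably uses for Theorem \ref{mainthm:low-moment-maass}, and the total mass tends to $1$ by Li's weighted Weyl law. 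To pass from polynomials to the non-polynomial function $G$, I would view $G$ as a continuous function of the real vector $(\lambda_j(p))_{p\le x}$. The second moments of $\lambda_j(p^a)$ converge, so the family is tight, and the method of moments determines the limit law, because the limiting distribution (product Sato--Tate on $[-2,2]^{\pi(x)}$) is compactly supported and hence moment-determinate. This gives weak convergence of the joint law of $(\lambda_j(p))_{p\le x}$. Then $G$ is continuous with polynomial growth, hence uniformly integrable by the bounded fourth moments, so its averages converge to $\mathbb{E}\,G$. Theorem \ref{mainthm:low-moment-random-GL2} again gives \eqref{eq:limiting-behavior-of-sums of-Maass-Hecke-eigenvalues}. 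The delicate points are justifying the Kuznetsov limit with a sharp cutoff (handled by smoothing and sandwiching, using positivity of the weights) and the uniform integrability. Both become routine once $x$ is fixed.
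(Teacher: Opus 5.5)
Your argument is correct. For holomorphic forms it is essentially the paper's route: match moments via Petersson, approximate by polynomials, then invoke Theorem~\ref{mainthm:low-moment-random-GL2}. The only difference there is which polynomial approximation you use. The paper approximates just the one-variable function $y\mapsto y^{q}$ on the bounded range of $|\sum_{n\le x}\lambda_f(n)|^2$, so it needs only the integer moments of $|\sum_{n\le x}\lambda_f(n)|^2$. Your Stone--Weierstrass step on $[0,\pi]^{\pi(x)}$ instead proves the stronger statement that $(\theta_f(p))_{p\le x}$ is jointly equidistributed.

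The genuine divergence is in the Maass case. The paper never handles the sharp cutoff at the level of individual moments. It runs the whole argument for the exponentially smoothed average $\mathbb{E}^{\rm Maass}$, where Lemma~\ref{lemma:orthogonality-maass} supplies orthogonality directly. Only at the very end does it pass to $\sum_{t_j\le T}$, via a Tauberian theorem applied to the nonnegative quantity $|\sum_{n\le x}\lambda_j(n)|^{2q}$; positivity of that summand is exactly what the Tauberian step needs.

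You instead need a sharp-cutoff Kuznetsov asymptotic for $\lambda_j(m)\lambda_j(n)$ up front. That asymptotic is classical, but sandwiching a signed quantity takes more than positivity of the weights. You also need the uniform bound $|\lambda_j(m)\lambda_j(n)|\ll_{m,n}1$ from Kim--Sarnak, so that the transition windows of the smooth majorant/minorant contribute only $o(T^2)$ by the weighted Weyl law.

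That same bound makes your tightness and uniform-integrability step unnecessary. For fixed $x$, the vectors $(\lambda_j(p))_{p\le x}$ lie in a fixed compact box, so plain Weierstrass approximation works exactly as in the holomorphic case. This boundedness is also what silently justifies the paper's Weierstrass step for Maass forms.
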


Inspired by Harper's works \cite{H20,H23}, one may formulate the
following strategy for studying the typical size of coefficient
sums in a family \(\mathcal{F}\) of \(L\)-functions. First, one identifies
a suitable random multiplicative function that models the behavior of the coefficients of the \(L\)-functions in
\(\mathcal{F}\), and
establishes for this model an analogue of
\cite[Multiplicative Chaos Results 1]{H23}. Next, by using Harper's derandomization strategy, one transfers averages of coefficient sums, taken over the family \(\mathcal{F}\), to certain averages involving the specific random multiplicative function. Finally, one applies the multiplicative chaos estimate
obtained in the first step to these averages involving the random multiplicative function.

The applicability of this strategy relies on two aspects. On
the probabilistic side, an examination of the proof of
\cite[Theorems~1 and~2]{H20} suggests that the relevant
multiplicative chaos estimates are not specific to the Steinhaus or Rademacher random multiplicative function,
but should extend to more general random multiplicative functions
satisfying orthogonality relations and some additional conditions.  On the arithmetic side, the
coefficients in many natural families of \(L\)-functions
satisfy orthogonality relations.

We shall discuss, in the context of the proofs of Theorems~\ref{mainthm:low-moment-modular-sharp} and \ref{mainthm:low-moment-modular}, some technical points concerning the second  step described above. The \(\GL_2\) random multiplicative model introduced above captures the
behavior of Hecke eigenvalues. The next task is
therefore to pass from averages
of Hecke eigenvalue
sums to
certain averages involving \(\GL_2\) random multiplicative functions. In Harper's
work, \cite[Proposition~1]{H23} is the key proposition: it shows
that certain character averages are well approximated by the
corresponding averages of Steinhaus random multiplicative functions. A
crucial tool in its proof is the twisted second moment estimate of character sums
given in \cite[Lemma~1]{H23}.
Therefore, we need to establish, for holomorphic Hecke eigenvalues, an analogue of \cite[Lemma 1]{H23}.  The proof of that lemma, as well as the
proof of the analogous estimate for \(n^{it}\), exploits the complete
multiplicativity of \(\chi(n)\) and \(n^{it}\).
 By contrast, Hecke eigenvalues are not completely multiplicative. Therefore,  the arguments in the proof of \cite[Lemma~1]{H23} cannot be
applied directly to Hecke eigenvalue sums.

In this paper, we use two different approaches to estimate twisted
second moments of holomorphic Hecke eigenvalues
$$ \sumh_{f \in \mathcal{H}_k}\left|\sum_{n \leq x} c(n) \lambda_f(n) \right|^{2} \left|\sum_{p\le P}\left(
\frac{a(p)\lambda_f(p)}{\sqrt p}
+
\frac{a(p^2)(\lambda_f(p^2)-1)}{p}    \right)\right|^{2l}$$
 where $(c(n))_{n \leq x}$, $(a(p))_{p\leq P}$ and  $(a(p^2))_{p\leq P}$ are any complex numbers with $|c(n)|,|a(p)|,|a(p^2)|\leqslant 1$. The first approach is
used to prove Lemma~\ref{lemma: evenmomentlem-modular-1}. This lemma
is then applied in establishing
\eqref{eq:holomorphic-Eigenvalues-behave like-random-model1} in
Proposition~\ref{prop:holomorphic-Eigenvalues-behave like-random-model},
which in turn is used in the proof of
Theorem~\ref{mainthm:low-moment-modular-sharp}. We begin by passing to
the $\GL_2$ random model, at the cost of an acceptably small
error term. We then apply H\"older's inequality to separate the
contribution of the sum over \(n\leq x\) from that of the sum over
primes \(p\leq P\). The former can be handled relatively directly,
whereas the latter is estimated using an argument inspired by the proof of Khintchine's inequality.
The second approach is used to prove
Lemma~\ref{lemma: evenmomentlem-modular-2}. This lemma is applied in
establishing
\eqref{eq:holomorphic-Eigenvalues-behave like-random-model2} in
Proposition~\ref{prop:holomorphic-Eigenvalues-behave like-random-model},
which is used in the proof of
Theorem~\ref{mainthm:low-moment-modular}. In this
approach, we first bound the sum over primes \(p\leq P\) trivially and
then take the expectation of the remaining sum over \(n\leq x\).

The bound in Lemma~\ref{lemma: evenmomentlem-modular-1}
is of a similar shape  to the bound
\(x(\log P)(k!)\left(3\log\log P\right)^k\)
obtained from \cite[Lemma~1]{H23} by taking
\(\mathcal{P}=\{p\leq P\}\). This also explains that, for a certain range of the parameters, we can obtain a result of the same strength as \cite[Theorem~1]{H23}, namely Theorem~\ref{mainthm:low-moment-modular-sharp}.

We expect an analogue of Theorem \ref{mainthm:low-moment-maass}
to hold for Hecke--Maass forms for $\mathrm{SL}_n(\mathbb{Z})$ (\(n\geq 3\)). To this end, one may consider the following probabilistic model $\X_n$, which is a generalization of the \(\GL_2\) random model of Cogdell and Michel \cite{CM04}. Let $G:=\mathrm{SU}(n)$, equipped
with normalized Haar measure $\mu_G$. Let $G^\natural$ be the set of conjugacy classes of $G$, endowed  with the
Sato--Tate measure $\mu_{\mathrm{st},n}:=\pi_*\mu_G$, where
$\pi:G\to G^\natural$ is the natural projection. Let
$(g_p^\natural)_p$ be a sequence of
independent random variables indexed by the primes,
each taking values in $G^\natural$ 
and distributed
with respect to the Sato--Tate measure $\mu_{\mathrm{st},n}$. For each integer \(a\geq 0\), let \(\operatorname{Sym}^a\) denote the
\(a\)-th symmetric power of the standard representation of \(SU(n)\).
For \(g^\natural\in G^\natural\), we write
\[
\operatorname{tr}\bigl(\operatorname{Sym}^a(g^\natural)\bigr)
:=
\operatorname{tr}\bigl(\operatorname{Sym}^a(g)\bigr),
\quad g\in g^\natural,
\]
which is independent of the choice of the representative \(g\).
For each prime $p$, set
\[
\mathbb{X}_n(p^a)
:=
\operatorname{tr}\bigl(\operatorname{Sym}^a(g_p^\natural)\bigr).
\] In particular, \(\mathbb X_n(1)=1\). For $m=p_1^{a_1}\cdots p_r^{a_r}$, set\footnote{For \(n=2\), the probabilistic model presented here is exactly
the \(\GL_2\) random model of Cogdell and Michel \cite{CM04}.
The definition here is equivalent to the one given earlier,
since the map
\begin{align*}
\theta \longmapsto g(\theta)^\natural =
\begin{pmatrix}
e^{i\theta} & 0 \\
0 & e^{-i\theta}
\end{pmatrix}^{\natural}
\end{align*}
allows us to identify \([0,\pi]\) with \(G^\natural\)  and \(\mathrm{d}\mu_{\mathrm{st}}(\theta)
= \frac{2}{\pi}\sin^2(\theta)\,\mathrm{d}\theta\)
with \(\mu_{\mathrm{st},2}\).}
\[
\mathbb{X}_n(m)
:=
\prod_{j=1}^{r}\mathbb{X}_n(p_j^{a_j})
=
\prod_{j=1}^{r}
\operatorname{tr}\bigl(\operatorname{Sym}^{a_j}(g_{p_j}^\natural)\bigr).
\]
The representations \(\operatorname{Sym}^a\) (\(a\geq 0\)) are
irreducible and pairwise non-isomorphic (see Sepanski \cite{S07}). By Schur orthogonality,
\[
\int_{SU(n)}
\operatorname{tr}\bigl(\operatorname{Sym}^{a}(g)\bigr)\overline{\operatorname{tr}\bigl(\operatorname{Sym}^{b}(g)\bigr)}\,d\mu_G(g)
=\delta_{a,b}.
\]
Since \(\mu_{\mathrm{st},n}=\pi_*\mu_G\), it follows that
\[
\mathbb E\!\left(
\mathbb X_n(p^a)\overline{\mathbb X_n(p^b)}
\right)
=\delta_{a,b},
\]
which, together with the independence of the random variables \(g_p^\natural\) for distinct primes, gives 
\[
\mathbb E\!\left(
\mathbb X_n(m)\overline{\mathbb X_n(\ell)}
\right)
=\delta_{m,\ell}.
\]
In view of this orthogonality relation, it is believable that one can establish an analogue of Theorem \ref{mainthm:low-moment-random-GL2} for this probabilistic model. Moreover, it should be possible to use the strategy described above to establish  an analogue of Theorem \ref{mainthm:low-moment-maass}
for Hecke--Maass forms for $\mathrm{SL}_n(\mathbb{Z})$  (\(n\geq 3\)). 
\subsection*{Notation and convention}
For any function $W$ on $\mathcal{H}_k$, we define \[\E^{\rm holo}W(f):=\sum_{f\in\mathcal{H}_k}\omega_fW(f).\] For any function $W$ on $\{u_j\}_{j\geq1}$, we define \[\E^{\rm Maass}W(u_j):=\frac{6}{T^2}\sum_{j}
        \frac{\zeta(2)}{L(1,\mathrm{sym}^2 u_j)}e^{-t_j/T}W(u_j).\] 

\subsection*{Acknowledgements}
We are very grateful to Adam Harper, Alexandre de Faveri, Andrew Granville, Didier Lesesvre, Junxian Li, Pawe{\l}~Nosal and  Ping Xi for helpful discussions. The second author would like to thank Didier Lesesvre for pointing out the reference \cite{L11}. Part of the work was done when both authors were participating in the 2026 thematic program \emph{Universal Statistics in Number Theory} at the Centre de recherches mathématiques
(CRM) in Montreal, and we thank the organizers for their hospitality.  The second author carried out part of this work while participating
in the Summer School \emph{``FOUVRY-73''}, and he  would like to thank the organizers for their hospitality. The second author thanks Chengjie Wang for  helpful discussions.

\section{preliminaries}

\subsection{Orthogonality relations}
We record the following consequence of the Petersson trace formula, which is stated as Lemma 2.1 in \cite{RS06}.
Let $\mathcal{H}_k$ be defined as before. 
\begin{lemma}\label{lemma:orthogonality-holomorphic}
Let $k$ be large, and let $m$ and $n$ be natural numbers with
$
mn \leq \frac{k^2}{10^4}
$. Then
\[\sumh_{f \in \mathcal{H}_k} \lambda_f(m) \lambda_f(n) = \delta_{m=n} + O(e^{-k}).\]
\end{lemma}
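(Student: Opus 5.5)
We derive Lemma \ref{lemma:orthogonality-holomorphic} from the Petersson trace formula. For weight $k$ and level one, in the harmonic normalization it reads
\[
\sumh_{f\in\mathcal{H}_k}\lambda_f(m)\lambda_f(n)
=\delta_{m=n}+2\pi i^{-k}\sum_{c\geq 1}\frac{S(m,n;c)}{c}\,J_{k-1}\!\left(\frac{4\pi\sqrt{mn}}{c}\right),
\]
where $S(m,n;c)$ is the Kloosterman sum. We take this identity as known. The whole task is then to show that the Kloosterman–Bessel series is $O(e^{-k})$ when $mn\leq k^2/10^4$.

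We bound each term trivially, using the Weil bound or even just $|S(m,n;c)|\leq c$. This leaves
\[
\sum_{c\ge1}\left|J_{k-1}\!\left(\frac{4\pi\sqrt{mn}}{c}\right)\right|.
\]
Set $y_c:=4\pi\sqrt{mn}/c$. Then $y_c\leq 4\pi k/(100c)\leq k/7$ for every $c\geq1$, so the Bessel function is evaluated far inside its transition range.

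Next we use the power-series bound
\[
|J_\nu(y)|\leq \frac{(y/2)^\nu}{\Gamma(\nu+1)}\,e^{y^2/4},
\]
which follows from $J_\nu(y)=\sum_j \frac{(-1)^j (y/2)^{\nu+2j}}{j!\,\Gamma(\nu+j+1)}$. Take $\nu=k-1$ and $y=y_c$. Stirling's formula gives $\Gamma(k)\geq (k/e)^{k-1}$ up to polynomial factors, so
\[
|J_{k-1}(y_c)|\ll \left(\frac{e\,y_c}{2(k-1)}\right)^{k-1}e^{y_c^2/4}.
\]
Since $y_c\leq k/7$, the base $e\,y_c/(2(k-1))$ is at most about $e/14<1/5$. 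The factor $e^{y_c^2/4}$ is more dangerous: at $c=1$ it can be as large as $e^{k^2/196}$, which would swamp everything. This is the one real obstacle, and we handle it by not using the crude exponential factor.

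Instead we bound the series directly: $\sum_j \frac{(y/2)^{2j}}{j!\,(\nu+1)_j}\leq \sum_j \frac{(y/2)^{2j}}{(\nu+1)^j}$, which is geometric with ratio $y^2/(4(\nu+1))$. For $y\leq k/7$ that ratio is of size $k/196$, so the series still does not converge nicely, and we need something sharper.

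The cleaner route is the uniform bound $|J_\nu(\nu z)|\leq \frac{z^\nu e^{\nu\sqrt{1-z^2}}}{(1+\sqrt{1-z^2})^\nu}$ for $0<z\leq1$ (Watson, \S8.5, the standard Kapteyn-type inequality), which is sharp. Here $z=y_c/(k-1)\leq 1/7+o(1)$, and the function $z e^{\sqrt{1-z^2}}/(1+\sqrt{1-z^2})$ is increasing on $(0,1]$. At $z=1/7$ it is about $(1/7)e^{0.99}/1.99\approx0.19<e^{-1.6}$. Hence
\[
|J_{k-1}(y_c)|\leq \bigl(e^{-1.6}\bigr)^{k-1}\,(7z)^{k-1}\leq e^{-1.6(k-1)}\,c^{-(k-1)}.
\]
The last step uses that the base is at most $z\,e$, so we may keep a factor $c^{-(k-1)}$ from $z\propto 1/c$.

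Summing over $c$ gives $\ll e^{-1.6(k-1)}\sum_c c^{-(k-2)}\ll e^{-1.5k}$, which is $O(e^{-k})$, and this proves the lemma. The step needing care is this Bessel bound uniform in the ratio of argument to order. Any version of it giving decay $\theta^k$ with $\theta<e^{-1}$ for $y\leq k/7$ suffices. This is exactly what the constant $10^4$ is chosen to guarantee, and the remaining estimates are routine.
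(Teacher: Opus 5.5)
Your argument is correct and is essentially the standard proof behind this statement. The paper gives no proof of its own and cites Rudnick--Soundararajan, whose argument is the same as yours: apply the Petersson formula, bound $S(m,n;c)$ trivially or by Weil, and use the exponential decay of $J_{k-1}(y)$ for $y\le 4\pi\sqrt{mn}\le 4\pi k/100$.

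Two small remarks on your write-up:

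\begin{enumerate}
\item The power-series route you abandoned already works. You dropped the $j!$; if you keep it and use $(\nu+1)_j\ge(\nu+1)^j$, you get $|J_\nu(y)|\le \frac{(y/2)^\nu}{\Gamma(\nu+1)}e^{y^2/(4(\nu+1))}$. Here $e^{y^2/(4k)}\le e^{k/196}$, which is harmless. Simpler still, the Poisson integral gives $|J_\nu(y)|\le (y/2)^\nu/\Gamma(\nu+1)$ directly.
\item In the Kapteyn step, the bound you need is $\frac{z e^{\sqrt{1-z^2}}}{1+\sqrt{1-z^2}}\le \frac{e}{2}z$, not $ez$. It is the factor $\frac{e}{2}$ that gives the comparison with $e^{-1.6}(7z)$, because $e/2<7e^{-1.6}$ while $e>7e^{-1.6}$.
\end{enumerate}
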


The following lemma follows from a standard application of the Kuznetsov formula, which is stated as Lemma 1 in \cite{BBR14}. 

\begin{lemma}\label{lemma:orthogonality-maass}
For \(m,n\in \mathbb{N}\), \(T\geq 1\) and \(\varepsilon>0\) we have
\[
\sum_j 
\frac{\zeta(2)}{L(1,\operatorname{sym}^2 u_j)}
e^{-t_j/T}\lambda_j(m)\lambda_j(n)
=
\delta_{m=n}\frac{T^2}{6}
+
O_\varepsilon\left(
T(mnT)^\varepsilon+(mn)^{1/4+\varepsilon}
\right).
\]
\end{lemma}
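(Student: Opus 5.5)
We deduce the lemma from the Kuznetsov trace formula for $\mathrm{SL}_2(\mathbb Z)$, following the standard argument of \cite{BBR14}. For an even test function $h$, holomorphic in a strip $|\Im t|\le \tfrac12+\delta$ and decaying there, Kuznetsov's formula reads
\[
\sum_j \frac{\overline{\rho_j(m)}\rho_j(n)}{\cosh(\pi t_j)}h(t_j)+\frac{1}{4\pi}\int_{\mathbb R}\frac{\overline{\tau_{it}(m)}\tau_{it}(n)}{|\zeta(1+2it)|^2}\,h(t)\,dt
=\frac{\delta_{m=n}}{\pi^2}\int_{\mathbb R} t\tanh(\pi t)h(t)\,dt+\sum_{c\ge1}\frac{S(m,n;c)}{c}\,H\!\Big(\frac{4\pi\sqrt{mn}}{c}\Big),
\]
where $\tau_{it}(n)=\sum_{ab=n}(a/b)^{it}$ and $H(x)=\frac{2i}{\pi}\int_{\mathbb R}J_{2it}(x)\frac{h(t)\,t}{\cosh(\pi t)}\,dt$. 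The formula is used only in this form, up to harmless normalisations. Substituting $\rho_j(m)=\rho_j(1)\lambda_j(m)$ and $|\rho_j(1)|^2=2\cosh(\pi t_j)/L(1,\mathrm{sym}^2u_j)$, the spectral side becomes $2\sum_j \lambda_j(m)\lambda_j(n)h(t_j)/L(1,\mathrm{sym}^2u_j)$. This is exactly the weighted sum in the lemma, up to the factor $\zeta(2)$, once $h(t_j)$ agrees with $e^{-t_j/T}$.

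Since $e^{-|t|/T}$ is not holomorphic at $t=0$, we do not insert it directly. We would take an admissible $h$, for instance $h(t)=e^{-\sqrt{t^2+1}/T}$ or a similar even holomorphic function, which is $e^{-t/T}(1+O(1/(tT)))$ for $t\ge1$. The discrepancy with $e^{-t_j/T}$ is then estimated by absolute values. Using $\lambda_j(m)\lambda_j(n)\ll (mnt_j)^{\varepsilon}$ on average, via Cauchy--Schwarz and the large sieve, together with Weyl's law $\#\{t_j\le t\}\ll t^2$, this discrepancy costs $\ll \sum_j t_j^{-1}T^{-1}e^{-t_j/T}(mn)^\varepsilon\ll T(mnT)^\varepsilon$. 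We then treat the terms of the formula in turn.
\begin{enumerate}
\item The diagonal term is $\frac{1}{\pi^2}\int t\tanh(\pi t)h(t)\,dt=\frac{2}{\pi^2}T^2+O(T)$. After multiplying by $\zeta(2)/2=\pi^2/12$, this gives $\delta_{m=n}T^2/6+O(T)$.
\item The Eisenstein term is bounded using $|\tau_{it}(n)|\le d(n)\ll n^\varepsilon$ and $|\zeta(1+2it)|^{-1}\ll\log(2+|t|)$. Integrating against $h$ gives $O(T(mnT)^\varepsilon)$.
\item The Kloosterman term is handled with Weil's bound $S(m,n;c)\ll (m,n,c)^{1/2}c^{1/2+\varepsilon}$ together with bounds for the Bessel transform $H(x)$.
\end{enumerate}

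The main obstacle is bounding $H(x)$ uniformly in $x$ and $T$. We would shift the $t$-contour to $\Im t=-\tfrac12-\delta'$ (or use the integral representation of $J_{2it}$) to get $H(x)\ll x\,T^{\varepsilon}$ for small $x$, and use stationary phase for large $x$, which shows that $H(x)$ is negligible unless $x\gg T^{1-\varepsilon}$ and satisfies $H(x)\ll T x^{-1/2}$ there. Write $x_c=4\pi\sqrt{mn}/c$ and split the $c$-sum at $c\asymp\sqrt{mn}$.
\begin{itemize}
\item For $c\gg\sqrt{mn}$, the bound $H(x)\ll x T^\varepsilon$ gives $\sum_{c}c^{-1/2+\varepsilon}\,\sqrt{mn}\,c^{-1}\,T^{\varepsilon}\ll (mn)^{1/4+\varepsilon}T^{\varepsilon}$.
\item For $c\ll\sqrt{mn}$, the large-$x$ bound gives a total of the same order $(mn)^{1/4+\varepsilon}$, possibly with an extra factor $T^{\varepsilon}$. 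This range is empty in any case when $mn\ll T^{2-\varepsilon}$.
\end{itemize}
The $T^\varepsilon$ factors are absorbed into the $T(mnT)^\varepsilon$ term when $mn\ll T^{4}$, and are harmless otherwise; some care is needed here to obtain exactly the stated shape of the error. Collecting the diagonal, Eisenstein, Kloosterman and smoothing contributions then gives the lemma.
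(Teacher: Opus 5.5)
Your route is the one the paper intends. The paper gives no argument of its own; it quotes the lemma from \cite{BBR14} as a standard application of the Kuznetsov formula. Your diagonal and Eisenstein terms are fine, and so is $H(x)\ll xT^{\varepsilon}$ for $x\le 1$. In fact $H(x)\ll x$ there: take the residue at $t=-i/2$ and move to $\Im t=-\eta$ with $3/4<\eta<1$, which your $h$ allows.

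The gap is in the step you yourself call the main obstacle. For $x\ge 1$ you assert that $H(x)$ is negligible unless $x\gg T^{1-\varepsilon}$, and that $H(x)\ll Tx^{-1/2}$ beyond. That behaviour holds for a test function concentrated on a window $|t|\in[T,T+O(1)]$, not for yours. Your $h$ is $\approx 1$ on all of $|t|\ll T$, and the low spectrum does not cancel.

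To see this, split $h=\sum_U h_U$ dyadically in $|t|$. Using $H(x)=\frac{2}{\pi^2}\int_{\mathbb R}\cos(x\cosh\xi)\,\hat k(\xi)\,d\xi$ with $k(t)=t\tanh(\pi t)h(t)$ (or the uniform asymptotics of $J_{2it}(x)$), the transform of the piece at $|t|\asymp U$ behaves as follows:
\begin{itemize}
\item it is negligible for $x\le U^{2-\varepsilon}$;
\item it is of size $U^2x^{-1/2}$ for $x\ge U^2$.
\end{itemize}
The pieces with $U\asymp\min(\sqrt{x},T)$ dominate. So $H(x)$ has size $\sqrt{x}$ for $1\ll x\ll T^2$, and size $T^2x^{-1/2}$ for $x\gg T^2$. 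Formally, $h\equiv 1$ gives exactly $H(x)=\pi^{-1}xJ_0(x)$.

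Consequently the terms with $c\le\sqrt{mn}$ are never negligible, in particular not when $mn\ll T^{2-\varepsilon}$. Your bound $Tx^{-1/2}$ is too small by a factor up to $T$. The stationary-phase argument you outline cannot establish the bounds you state, because they are false for this $h$.

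The lemma still holds, because the correct uniform bound $H(x)\ll T^{\varepsilon}\min(x,\sqrt{x})$ is enough. With Weil's bound and $\sqrt{x_c}\asymp (mn)^{1/4}c^{-1/2}$,
\[
\sum_{c\ge 1}\frac{(m,n,c)^{1/2}c^{1/2+\varepsilon}}{c}\,T^{\varepsilon}\min\bigl(x_c,\sqrt{x_c}\bigr)\ll (mn)^{1/4+\varepsilon}T^{\varepsilon}.
\]
Both ranges $c\le\sqrt{mn}$ and $c>\sqrt{mn}$ contribute at this order, and the $T^{\varepsilon}$ is absorbed as you indicate. But you need the oscillatory analysis above to get there, not the bounds in your proposal.

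A smaller point concerns the smoothing step. The spectral large sieve only gives $\sum_{t_j\le X}|\lambda_j(m)|^2/L(1,\mathrm{sym}^2u_j)\ll (X^2+m)(mX)^{\varepsilon}$. That is not an average bound $(mn)^{\varepsilon}$ uniform in $m,n$. With your weights it leaves a term $\sqrt{mn}/T$, which is too large once $mn>T^4$. The pointwise Kim--Sarnak bound $|\lambda_j(m)|\ll m^{7/64+\varepsilon}$ suffices instead. Since $\sum_j e^{-t_j/T}/(t_jT\,L(1,\mathrm{sym}^2u_j))\ll\log T$, the discrepancy between $h(t_j)$ and $e^{-t_j/T}$ then costs only $(mn)^{7/64+\varepsilon}\log T$.
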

\subsection{A smooth partition of unity}\label{subsecpartition}
In the process of using Harper's derandomization strategy, we shall need the following result about a smooth partition of unity, which plays a role like the conditioning argument in the random case.

\begin{lemma}[Harper {\cite[Approximation Result~1]{H23}}]\label{lemma:partition-of-unity}
Let $N \in \N$ be large, and $\delta > 0$ be small. There exist functions $g : \R \rightarrow \R$ (depending on $\delta$) and $g_{N+1} : \R \rightarrow \R$ (depending on $\delta$ and $N$) such that, if we define $g_{j}(x) = g(x - j)$ for all integers $|j| \leq N$, we have the following properties:
\begin{enumerate}
\item $\sum_{|j| \leq N} g_{j}(x) + g_{N+1}(x) = 1$ for all $x \in \R$;

\item $g(x) \geq 0$ for all $x \in \R$, and $g(x) \leq \delta$ whenever $|x| > 1$;

\item $g_{N+1}(x) \geq 0$ for all $x \in \R$, and $g_{N+1}(x) \leq \delta$ whenever $|x| \leq N$;

\item for all $l \in \N$ and all $x \in \R$, we have the derivative estimate $|\frac{d^{l}}{dx^{l}} g(x)| \leq \frac{1}{\pi (l+1)} (\frac{2\pi}{\delta})^{l+1}$.

\end{enumerate}
\end{lemma}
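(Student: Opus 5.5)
The plan is to build $g$ by smoothing the indicator function of a unit interval with a nonnegative kernel whose Fourier transform has compact support, and to read off all four properties from that construction. Set $M := 2\pi/\delta$ and let $K$ be the Fejér kernel
\[
K(t) := \frac{M}{2\pi}\Bigl(\frac{\sin(Mt/2)}{Mt/2}\Bigr)^2 .
\]
Then $K \geq 0$ and $\int_{\R} K = 1$. Its Fourier transform $\widehat K(\xi) = \int K(t)e^{-i\xi t}\,dt$ is the triangle function $(1-|\xi|/M)_+$, so it is supported in $[-M,M]$ and satisfies $|\widehat K| \leq 1$. Define
\[
g(x) := \int_{\R} \mathbf{1}_{[-1/2,1/2)}(x-t)\,K(t)\,dt
\]
and $g_{N+1}(x) := \sum_{|j|>N} g(x-j)$.

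\textbf{Property (1).} The intervals $[j-1/2,\,j+1/2)$, $j\in\Z$, tile $\R$, so $\sum_{j\in\Z}\mathbf{1}_{[-1/2,1/2)}(x-j-t)=1$ for every $t$. Integrating against $K$ (Tonelli applies since everything is nonnegative) gives $\sum_{j\in\Z} g_j(x) = 1$. This is exactly (1) with our definition of $g_{N+1}$.

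\textbf{Properties (2) and (3).} Nonnegativity of $g$ and of $g_{N+1}$ is immediate from $K\ge 0$. Both upper bounds reduce to the tail estimate $\int_{|t|>1/2} K(t)\,dt \le \delta$.
\begin{itemize}
\item If $|x|>1$, the support condition $|x-t|\le 1/2$ forces $|t|>1/2$, so $g(x)\le\int_{|t|>1/2}K$.
\item If $|x|\le N$, then $g_{N+1}(x)=\int \mathbf{1}_{\{|x-t|\geq N+1/2\}}K(t)\,dt$ (up to endpoint conventions). Here $|x-t|\ge N+1/2$ forces $|t|\geq 1/2$, so again $g_{N+1}(x)\le\int_{|t|\ge1/2}K$.
\end{itemize}
For the tail, use $\sin^2\le 1$:
\[
\int_{|t|>1/2}K(t)\,dt \le \frac{M}{2\pi}\int_{|t|>1/2}\frac{4}{M^2t^2}\,dt = \frac{M}{2\pi}\cdot\frac{16}{M^2} = \frac{8}{\pi M} = \frac{4\delta}{\pi^2} < \delta .
\]

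\textbf{Property (4).} This is the step needing the precise constant, and Fourier inversion gives it. We have $\widehat g(\xi) = \widehat K(\xi)\cdot \frac{2\sin(\xi/2)}{\xi}$. Both factors have modulus at most $1$, and $\widehat g$ vanishes for $|\xi|>M$. Since $\widehat g$ is bounded and compactly supported, $g$ is smooth and we may differentiate under the inversion integral:
\[
\Bigl|\frac{d^l}{dx^l}g(x)\Bigr| \le \frac{1}{2\pi}\int_{-M}^{M}|\xi|^l\,d\xi = \frac{M^{l+1}}{\pi(l+1)} = \frac{1}{\pi(l+1)}\Bigl(\frac{2\pi}{\delta}\Bigr)^{l+1}.
\]

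The only delicate point is choosing the normalisation of $K$ so that two things hold simultaneously: the support radius of $\widehat K$ is exactly $2\pi/\delta$, and the tail mass is at most $\delta$. The Fejér computation above shows both hold, with room to spare in the tail bound.
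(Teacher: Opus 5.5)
Your proof is correct and takes essentially the same approach as the source. The paper gives no proof of its own and simply cites Harper's Approximation Result~1, which is proved this way: smooth the indicator of a unit interval by a nonnegative Fej\'er-type kernel, $\frac{1}{\delta}\bigl(\frac{\sin(\pi t/\delta)}{\pi t/\delta}\bigr)^2$ (your $K$ with $M=2\pi/\delta$). Property (1) then comes from the tiling, (2)--(3) from the kernel's tail mass, and (4) from Fourier inversion using $|\widehat g|\le 1$ with support in $[-2\pi/\delta,2\pi/\delta]$. All your constants check out, including the tail bound $4\delta/\pi^2<\delta$ and the exact derivative constant $\frac{1}{\pi(l+1)}(2\pi/\delta)^{l+1}$.
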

\section{\texorpdfstring{\(\GL_2\)}{GL2} Probabilistic models}
\subsection{Probabilistic models}
Let \((\theta(p))_{p}\) be a sequence of
independent random variables with values in \([0,\pi]\), each distributed
with respect to the Sato--Tate measure
\(
        d\mu_{st}(\theta)=\frac{2}{\pi}\sin^2\theta\,d\theta .
\)
For a prime \(p\), set
\[
        \mathbb{X}(p)=2\cos\theta(p),
\]
and, more generally,
\[
        \mathbb{X}(p^a)
        =
        \sum_{j=0}^{a} e^{i(a-2j)\theta(p)}
        =
        \frac{\sin((a+1)\theta(p))}{\sin\theta(p)}.
\]
We define $\X(1)=1$ and extend \(\mathbb{X}\) multiplicatively by setting
\[
        \mathbb{X}(n):=\prod_{p^a\Vert n}\mathbb{X}(p^a).
\]
One can easily verify that
the \(\mathbb{X}(n)\) satisfy the
Hecke relations, namely that
\[
\mathbb{X}(m)\mathbb{X}(n)
=
\sum_{d\mid(m,n)}
\mathbb{X}\left(\frac{mn}{d^2}\right).
\]
Note that we have (see Lamzouri \cite[Lemma 3.2]{L19})
\begin{equation}\label{eq:orthoganol-relation-GL2-RMf}
        \mathbb{E}\bigl(\mathbb{X}(m)\mathbb{X}(n)\bigr)
        =
        \mathbf{1}_{m=n}.
\end{equation}
 The following Lemmas \ref{lemma: Eigenvalue like random model-holomorphic} and \ref{lemma: Eigenvalue like random model-maass} show that the $\GL_2$ Random multiplicative function captures the behavior of Hecke eigenvalues in a natural and direct sense.
\begin{lemma}Let $\X$ be a $\GL_2$ Random multiplicative function. For $n_1,\ldots,n_r\in \mathbb{N}$ with $n_1n_2\cdots n_r\leq k^2/10^4$, we have
\label{lemma: Eigenvalue like random model-holomorphic}
    \[
\E^{\rm holo}\bigl(\lambda_f(n_1)\lambda_f(n_2)\cdots \lambda_f(n_r)\bigr)
=\mathbb{E}\bigl(\X(n_1)\X(n_2)\cdots \X(n_r)\bigr)
+O((\tau(n_1)\cdots \tau(n_r))^{2}e^{-k}).
\]
\end{lemma}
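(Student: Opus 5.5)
We reduce the product of eigenvalues to a linear combination of single eigenvalues, using the Hecke relations. The $\GL_2$ random multiplicative function $\X$ satisfies exactly the same relations, so both sides expand with identical coefficients. Concretely, by repeated use of
\[
\lambda_f(m)\lambda_f(n)=\sum_{d\mid(m,n)}\lambda_f\!\left(\frac{mn}{d^2}\right),
\]
we shall show by induction on $r$ that there are nonnegative integers $c(m)=c_{n_1,\dots,n_r}(m)$ such that
\[
\lambda_f(n_1)\cdots\lambda_f(n_r)=\sum_{m\mid n_1\cdots n_r} c(m)\,\lambda_f(m),
\qquad
\X(n_1)\cdots\X(n_r)=\sum_{m\mid n_1\cdots n_r} c(m)\,\X(m).
\]
The coefficients $c(m)$ are the same in both identities because they are produced by the same formal recursion. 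Every $m$ that appears divides $n_1\cdots n_r$, since each step replaces $mn$ by $mn/d^2$. In particular every such $m$ satisfies $m\le n_1\cdots n_r\le k^2/10^4$.

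Next we take expectations, writing $\lambda_f(m)=\lambda_f(m)\lambda_f(1)$. Lemma~\ref{lemma:orthogonality-holomorphic} applies since $m\cdot 1\le k^2/10^4$, and gives $\E^{\rm holo}\lambda_f(m)=\delta_{m=1}+O(e^{-k})$. On the random side, \eqref{eq:orthoganol-relation-GL2-RMf} gives $\mathbb{E}\,\X(m)=\mathbf 1_{m=1}$. Subtracting the two expansions, the main terms cancel exactly: both equal $c(1)$. We are left with
\[
\E^{\rm holo}\bigl(\lambda_f(n_1)\cdots\lambda_f(n_r)\bigr)-\mathbb{E}\bigl(\X(n_1)\cdots\X(n_r)\bigr)=O\Bigl(e^{-k}\sum_m c(m)\Bigr).
\]

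It remains to bound the total mass $C(n_1,\dots,n_r):=\sum_m c(m)$ by $(\tau(n_1)\cdots\tau(n_r))^2$; we expect this to be the only step needing care. We would argue by induction on $r$. Write the product of the first $r-1$ terms as $\sum_{m'}c'(m')\lambda_f(m')$ and multiply by $\lambda_f(n_r)$. Each $m'$ then splits into $\tau((m',n_r))\le\tau(n_r)$ terms. This gives $C(n_1,\dots,n_r)\le \tau(n_r)\,C(n_1,\dots,n_{r-1})$, and hence $C\le\tau(n_2)\cdots\tau(n_r)\le\prod_i\tau(n_i)$, since $C(n_1)=1$. This is even better than the squared bound required, which therefore follows.

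One subtlety is whether the implied constant in $O(e^{-k})$ from Lemma~\ref{lemma:orthogonality-holomorphic} is uniform in $m$. It is, because the lemma is stated uniformly for $mn\le k^2/10^4$. Hence the error is $O(\tau(n_1)\cdots\tau(n_r)\,e^{-k})$, which is at most $O((\tau(n_1)\cdots\tau(n_r))^2e^{-k})$.
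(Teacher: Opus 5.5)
Your proof is correct and follows the paper's route. You expand via the Hecke relations into $\sum_{m\mid n_1\cdots n_r} c(m)\lambda_f(m)$, with identical coefficients on the $\X$ side, apply the Petersson orthogonality lemma with $n=1$, and match the $m=1$ terms. The only difference is bookkeeping: you bound the total mass $\sum_m c(m)\le\prod_i\tau(n_i)$ directly by induction, whereas the paper bounds each coefficient by $\prod_i\tau(n_i)$ and implicitly counts the $m$'s by $\tau(n_1\cdots n_r)\le\prod_i\tau(n_i)$. That counting is where the square comes from, so your version yields a slightly sharper error term.
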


\begin{proof}
The
Hecke relations  can be written as
\[
\lambda_f(n_1)\lambda_f(n_2)
=
\sum_{m\mid n_1n_2} b_m(n_1,n_2)\lambda_f(m),
\]
where \(b_m(n_1,n_2)=1\) if \(m=n_1n_2/d^2\) for some \(d\mid(n_1,n_2)\),
and equals \(0\) otherwise. More generally, one can write
\begin{equation}\label{eq:
multi-heckerelation1}
\lambda_f(n_1)\cdots \lambda_f(n_r)
=
\sum_{m\mid \prod_{j=1}^{r} n_j}
b_m(n_1,\ldots,n_r)\lambda_f(m),
\end{equation}
for some non-negative integers \(b_m(n_1,\ldots,n_r)\). Moreover, one can easily prove  that
\begin{equation}\label{eq:
bound for b_1}
b_m(n_1,\ldots,n_r) \leq \tau(n_1)\cdots \tau(n_r).
\end{equation}
Using \eqref{eq:
multi-heckerelation1}, \eqref{eq: bound for b_1} and Lemma \ref{lemma:orthogonality-holomorphic} ,  we have
\begin{equation}
\begin{aligned}
\E^{\rm holo}\bigl(\lambda_f(n_1)\lambda_f(n_2)\cdots \lambda_f(n_r)\bigr)
&=
\sum_{m\mid n_1n_2\cdots n_r}
b_m(n_1,n_2,\ldots,n_r)\E^{\rm Holo}\bigl(\lambda_f(m)\bigr)\\
&=
b_1(n_1,n_2,\ldots,n_r)+O(( \tau(n_1)\cdots \tau(n_r))^{2}e^{-k}).
\end{aligned}
\end{equation}
Similarly, we have
    \[
\mathbb{E}\bigl(\X(n_1)\X(n_2)\cdots \X(n_r)\bigr)
=
\sum_{m\mid n_1n_2\cdots n_r}
b_m(n_1,n_2,\ldots,n_r)\mathbb{E}\bigl(\X(m)\bigr)
=
b_1(n_1,n_2,\ldots,n_r).
\]
\end{proof}

\begin{lemma}Let $\X$ be a $\GL_2$ Random multiplicative function. For $n_1,\ldots,n_r\in \mathbb{N}$, we have 
\label{lemma: Eigenvalue like random model-maass}
\begin{align*}
&\mathbb{E}^{\rm Maass}\bigl(\lambda_j(n_1)\lambda_j(n_2)\cdots \lambda_j(n_r)\bigr)\\
=&\mathbb{E}\bigl(\X(n_1)\X(n_2)\cdots \X(n_r)\bigr)\\
&+O_{\varepsilon}((\tau(n_1)\cdots \tau(n_r))^{2}( n_1n_2\cdots n_r)^{\epsilon}T^{-(1-\varepsilon)}+(\tau(n_1)\cdots \tau(n_r))^{2}( n_1n_2\cdots n_r)^{1/4+\varepsilon}T^{-2}).
\end{align*}
\end{lemma}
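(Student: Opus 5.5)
The plan is to mirror the proof of Lemma \ref{lemma: Eigenvalue like random model-holomorphic}, with Lemma \ref{lemma:orthogonality-maass} playing the role of the Petersson formula. Since the Hecke relations are purely algebraic, the same linearisation \eqref{eq: multi-heckerelation1} holds for Maass forms:
\[
\lambda_j(n_1)\cdots\lambda_j(n_r)=\sum_{m\mid n_1\cdots n_r} b_m(n_1,\ldots,n_r)\lambda_j(m),
\]
with the same non-negative integer coefficients $b_m$ satisfying \eqref{eq: bound for b_1}. By \eqref{eq:orthoganol-relation-GL2-RMf} with $n=1$, the random side is $\mathbb{E}(\X(n_1)\cdots\X(n_r))=b_1(n_1,\ldots,n_r)$, exactly as before. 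It therefore remains to evaluate $\mathbb{E}^{\rm Maass}\lambda_j(m)$ for each $m\mid n_1\cdots n_r$ and sum with weights $b_m$.

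To do this, apply Lemma \ref{lemma:orthogonality-maass} with the pair $(m,1)$, using $\lambda_j(1)=1$, and multiply by the normalisation $6/T^2$ in the definition of $\mathbb{E}^{\rm Maass}$. This gives
\[
\mathbb{E}^{\rm Maass}\lambda_j(m)=\delta_{m=1}+O_\varepsilon\bigl(T^{-1}(mT)^{\varepsilon}+m^{1/4+\varepsilon}T^{-2}\bigr).
\]
Since $m\le n_1\cdots n_r$, I bound $(mT)^\varepsilon\le (n_1\cdots n_r)^\varepsilon T^{\varepsilon}$, so the first term is at most $(n_1\cdots n_r)^\varepsilon T^{-(1-\varepsilon)}$. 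Likewise $m^{1/4+\varepsilon}\le (n_1\cdots n_r)^{1/4+\varepsilon}$.

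Summing over $m$ then produces the main term $b_1$ plus an error. That error is bounded by
\[
\sum_{m\mid n_1\cdots n_r} b_m\le \tau(n_1\cdots n_r)\,\tau(n_1)\cdots\tau(n_r)\le (\tau(n_1)\cdots\tau(n_r))^2,
\]
using submultiplicativity $\tau(ab)\le\tau(a)\tau(b)$. Multiplying this by the two error bounds gives exactly the stated error term. Nothing here is a genuine obstacle; the only point needing care is that the $T^{\varepsilon}$ loss is absorbed into $T^{-(1-\varepsilon)}$, and that the constant $\delta_{m=1}\cdot\frac{T^2}{6}$ becomes exactly $1$ after normalisation. No size restriction on $n_1\cdots n_r$ is needed, since the Kuznetsov error is explicit in $m$.
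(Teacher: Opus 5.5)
Your proposal is correct and is the same argument as the paper's. The paper proves this lemma by repeating the proof of Lemma~\ref{lemma: Eigenvalue like random model-holomorphic}: it linearises the product via the Hecke relations with coefficients $b_m$, then applies orthogonality to each $\lambda_j(m)$, with Lemma~\ref{lemma:orthogonality-holomorphic} replaced by Lemma~\ref{lemma:orthogonality-maass}. Your treatment of the $6/T^2$ normalisation, the bound $(mT)^\varepsilon\le (n_1\cdots n_r)^\varepsilon T^\varepsilon$, and the count $\sum_{m} b_m\le(\tau(n_1)\cdots\tau(n_r))^2$ supplies exactly the details the paper leaves implicit.
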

\begin{proof}
    The proof is similar to the proof of Lemma \ref{lemma: Eigenvalue like random model-holomorphic} with  Lemma \ref{lemma:orthogonality-holomorphic} replaecd by  Lemma \ref{lemma:orthogonality-maass} 
\end{proof}

\subsection{\(\GL_2\) random Euler products.} 

The following lemmas concern mixed moments of certain Euler products
associated with the \(\GL_2\) random multiplicative function.  A basic
quantity in these calculations is
\[
R_p(t):=-\Re\log\left(
1-\frac{\X(p)}{p^{1/2+\sigma+it}}
+\frac{1}{p^{1+2\sigma+2it}}
\right),
\]
which is the logarithm of the modulus of the local factor at \(p\).
 It turns out that the estimates for
\[
\mathbb{E}R_p(t),\qquad
\mathbb{E}R_p(t)^2,\qquad
\mathbb{E}R_p(t_1)R_p(t_1+t_2)
\]
agree with their counterparts of the Rademacher random multiplicative
function. This agreement explains why these mixed moment calculations for the \(\GL_2\) random Euler products yield results of the
same form as those in the Rademacher case.
\begin{lemma}\label{lemma:mean-square-randomEulerproducts-upperbd}
If $\X$ is a $\GL_2$ random multiplicative function, then for any real $t_1$, $t_2$ and $u$, any real $40000\left(1+u^2\right) \leqslant x \leqslant y$ and any real $\sigma \geqslant-1 / \log y$, we have
$$
\begin{aligned}
 &\ \ \ \ \mathbb{E}  \prod_{x<p \leqslant y}\left|1-\frac{\X(p)}{p^{1 / 2+\sigma+it_1}}+\frac{1}{p^{1+2\sigma+2it_1}}\right|^{-2}\left|1-\frac{\X(p)}{p^{1 / 2+\sigma+i (t_1+t_2)}}+\frac{1}{p^{1+2\sigma+2i(t_1+t_2)}}\right|^{-i u} \\&
= \exp \left\{\sum_{x<p \leqslant y} \frac{1+i u c\left(t_1, t_2, p\right)-\left(u^2 / 4\right)\left(1+\cos \left(2\left(t_1+t_2\right) \log p\right)\right)}{p^{1+2 \sigma}}+T(u)\right\}
\end{aligned}
$$
where $c\left(t_1, t_2, p\right)=2 \cos \left(t_1 \log p\right) \cos \left(\left(t_1+t_2\right) \log p\right)-(1 / 2) \cos \left(2\left(t_1+t_2\right) \log p\right)$, and $T(u)=T_{x, y, \sigma, t_1, t_2}(u)$ satisfies $|T(u)| \ll \frac{1+|u|^3}{\sqrt{x} \log x}$, and $\left|T^{\prime}(u)\right| \ll \frac{1}{\sqrt{x} \log x}$ when $|u| \leqslant 1$.
\end{lemma}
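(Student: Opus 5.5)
By independence of the $\theta(p)$, the expectation factors as a product over primes $x<p\le y$. It therefore suffices to compute, for each such prime, the single-prime expectation
\[
E_p:=\mathbb{E}\,e^{2R_p(t_1)-iuR_p(t_1+t_2)},
\]
writing it as $\exp\{(\text{main term at }p)+O(\cdot)\}$, and then sum the errors over $p$. Set $\alpha_j:=p^{-1/2-\sigma-it_j'}$ with $t_1'=t_1$ and $t_2'=t_1+t_2$. Since $\mathbb{X}(p)=e^{i\theta}+e^{-i\theta}$, the local factor factorises as
\[
1-\mathbb{X}(p)\alpha+\alpha^2=(1-e^{i\theta}\alpha)(1-e^{-i\theta}\alpha).
\]
Hence
\[
R_p(t)=\Re\sum_{m\ge1}\frac{(e^{im\theta}+e^{-im\theta})\alpha^m}{m}.
\]
Its $m=1$ term is $\mathbb{X}(p)\Re(\alpha)$. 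Its $m=2$ term is $\tfrac12\Re(\alpha^2)(\mathbb{X}(p)^2-2)=\tfrac12\Re(\alpha^2)(\mathbb{X}(p^2)-1)$. The terms with $m\ge3$ contribute $O(p^{-3/2-3\sigma})$.

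The hypothesis $\sigma\ge -1/\log y$ gives $p^{-\sigma}\le e$, so $|\alpha|\ll p^{-1/2}$ uniformly. Also $x\ge 40000(1+u^2)$ makes $|u|/\sqrt p$ small. This lets us Taylor expand
\[
\exp(Z),\qquad Z:=2R_p(t_1)-iuR_p(t_1+t_2),
\]
to second order, with a third-order remainder. Write $Z=Z_1+Z_2+O\bigl((1+|u|)p^{-3/2}\bigr)$, where $Z_1$ is linear in $\mathbb{X}(p)$ with size $(1+|u|)p^{-1/2}$, and $Z_2$ is linear in $\mathbb{X}(p^2)-1$ with size $(1+|u|)p^{-1}$. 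Then
\[
\mathbb{E}e^{Z}=1+\mathbb{E}Z_1+\mathbb{E}Z_2+\tfrac12\mathbb{E}Z_1^2+O\Bigl(\frac{(1+|u|)^3}{p^{3/2}}\Bigr).
\]
The Sato--Tate moments are $\mathbb{E}\mathbb{X}(p)=0$, $\mathbb{E}\mathbb{X}(p)^2=1$, $\mathbb{E}\mathbb{X}(p^2)=0$, and $\mathbb{E}\mathbb{X}(p)^3=0$. Using them, $\mathbb{E}Z_1=0$ and $\mathbb{E}Z_2=-\Re(2\alpha_1^2)/2+iu\Re(\alpha_2^2)/2$. The third-order Taylor term has $\mathbb{E}Z_1^3=0$, so it only contributes $O(p^{-3/2})$-size cross terms.

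For the second-order term we get
\[
\tfrac12\mathbb{E}Z_1^2=\tfrac12\bigl(2\Re\alpha_1-iu\Re\alpha_2\bigr)^2 .
\]
Write $\Re\alpha_j=p^{-1/2-\sigma}\cos(t_j'\log p)$ and $\Re(\alpha_j^2)=p^{-1-2\sigma}\cos(2t_j'\log p)$, and collect terms using $2\cos^2 a=1+\cos 2a$.

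1. The constant part is $2\cos^2(t_1\log p)-\cos(2t_1\log p)=1$.
2. The $iu$ part is $-2\cos(t_1\log p)\cos(t_2'\log p)+\tfrac12\cos(2t_2'\log p)$. This matches $c(t_1,t_2,p)$ up to the sign convention, so I will need to double-check signs, since $-iuR_p$ enters with $R_p=-\Re\log$.
3. The $u^2$ part is $-\tfrac{u^2}{2}\cos^2(t_2'\log p)=-\tfrac{u^2}{4}\bigl(1+\cos(2t_2'\log p)\bigr)$.

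Thus $E_p=1+w_p+O\bigl((1+|u|)^3p^{-3/2}\bigr)$, where $w_p$ is the displayed summand and $|w_p|\ll(1+u^2)/p$. Taking logarithms gives $\log E_p=w_p+O\bigl((1+|u|^3)p^{-3/2}\bigr)$, since $w_p^2\ll(1+u^2)^2/p^2$. Here $(1+u^2)^2/p^2\le (1+|u|^3)p^{-3/2}$ uses $p\ge x\gg 1+u^2$. Summing over $p>x$ with the prime number theorem gives $\sum_{p>x}p^{-3/2}\ll 1/(\sqrt x\log x)$, which is the bound on $T(u)$.

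For $T'(u)$ with $|u|\le1$, I differentiate $E_p$ in $u$ under the expectation. The derivative of $\mathbb{E}e^Z$ is $\mathbb{E}\bigl(-iR_p(t_2')e^Z\bigr)$, and I expand it in exactly the same way; the error terms are again $O(p^{-3/2})$ uniformly for $|u|\le1$. Then $T'(u)=\sum_p\bigl(E_p'/E_p-w_p'\bigr)$, and this is $\ll\sum_{p>x}p^{-3/2}$.

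The main obstacle is the careful bookkeeping. I must verify that every cubic contribution, and the cross terms between $Z_1$ and $Z_2$, have vanishing mean or are $O(p^{-3/2})$ uniformly in $u$ and $\sigma$. This rests on odd Sato--Tate moments vanishing and on $|u|\le\sqrt{x}/200$ keeping the expansion convergent.
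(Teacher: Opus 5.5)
Your argument is essentially the paper's proof: factor by independence, expand $R_p$ to second order, Taylor expand the exponential using $\mathbb{E}\mathbb{X}(p)=0$, $\mathbb{E}\mathbb{X}(p)^2=1$ and $\mathbb{E}\mathbb{X}(p^2)=0$, then take logarithms and sum the $O(p^{-3/2})$ errors. Your factorisation $(1-e^{i\theta}\alpha)(1-e^{-i\theta}\alpha)$ is just a tidier route to the same expansion. The one correction is the sign you flagged: since $|w|^{-iu}=e^{-iu\log|w|}$ and $\log|w|=-R_p$, the exponent is $Z=2R_p(t_1)+iuR_p(t_1+t_2)$, not $2R_p(t_1)-iuR_p(t_1+t_2)$. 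With this sign the $iu$-coefficient comes out as exactly $+c(t_1,t_2,p)$, while the constant and $u^2$ parts are unchanged.
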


\begin{proof} This lemma is a $\GL_2$ analogue of \cite[Lemmas 1,~2]{H20} and the proofs are similar.
Set $R_p(t):=-\Re \log \left(1-\frac{\X(p)}{p^{1 / 2+\sigma+it}}+\frac{1}{p^{1+2\sigma+2it}}\right).$
Thus, we may rewrite
\begin{align}\label{eq:rewrite-localfactor}
&\ \ \ \ \left|1-\frac{\X(p)}{p^{1 / 2+\sigma+it_1}}+\frac{1}{p^{1+2\sigma+2it_1}}\right|^{-2}\left|1-\frac{\X(p)}{p^{1 / 2+\sigma+i (t_1+t_2)}}+\frac{1}{p^{1+2\sigma+2i(t_1+t_2)}}\right|^{-i u}  \\  &=\exp \left\{2 R_p(t_1)+i u R_p(t_1+t_2)\right\} \nonumber =1+\sum_{j=1}^{\infty} \frac{\left(2 R_p(t_1)+i u R_p(t_1+t_2)\right)^j}{j!} .\nonumber
\end{align}
Note that for primes $y \geqslant p>x \geqslant 40000\left(1+u^2\right)$, and for $\sigma \geqslant-\frac{1}{\log y}$, we have \(p^{-\sigma}\leq e\). Thus, we have
\[
\left|\frac{\X(p)}{p^{1/2+\sigma+it}}- \frac{1}{p^{1+2\sigma+2it}}\right|< 1.
\]
Using the Taylor expansion of the logarithm, we have
$$
\begin{aligned}
R_p(t)&=\sum_{k=1}^{\infty} \frac{\Re\left(\frac{\X(p)}{p^{1/2+\sigma+it}}- \frac{1}{p^{1+2\sigma+2it}}\right)^k}{k }\\&=\Re\left(\frac{\X(p)}{p^{1/2+\sigma+it}}- \frac{1}{p^{1+2\sigma+2it}}\right)+\Re\left(\frac{\X(p)^2}{2p^{1+2\sigma+2it}}-\frac{\X(p)}{p^{3/2+3\sigma+3it}}+ \frac{1}{2p^{2+4\sigma+4it}}\right)\\ &\ \ \ +\sum_{k=3}^{\infty} \frac{\Re\left(\X(p)p^{-it}-p^{-(1/2+\sigma+2it)}\right)^k}{k p^{k(1/2+\sigma)}}\\
&=\frac{\X(p)\cos(t\log p)}{p^{1/2+\sigma}}-\frac{\cos(2t\log p)}{p^{1+2\sigma}}+\frac{\X(p)^2\cos(2t\log p)}{2p^{1+2\sigma}}\\&
\ \ \ -\frac{\X(p)\cos(3t\log p)}{p^{3/2+3\sigma}}+\frac{\cos(4t\log p)}{2p^{2+4\sigma}}+\sum_{k=3}^{\infty} \frac{\Re\left(\X(p)p^{-it}-p^{-(1/2+\sigma+2it)}\right)^k}{k p^{k(1/2+\sigma)}}
\end{aligned}
$$
Using $p^{1/2+\sigma}\geqslant\tfrac{p^{1/2}}{e}\geqslant 7$, we have $$
\begin{aligned}
\sum_{k=3}^{\infty} \frac{\Re\left(\frac{\X(p)}{p^{1/2+\sigma+it}}- \frac{1}{p^{1+2\sigma+2it}}\right)^k}{k }&=\sum_{k=3}^{\infty} \frac{\Re\left(\X(p)p^{-it}-p^{-(1/2+\sigma+2it)}\right)^k}{k p^{k(1/2+\sigma)}}\\&\ll \frac{1}{p^{3/2+3 \sigma}} \sum_{k=3}^{\infty} \frac{3^k}{k7^{k-3}}\ll \frac{1}{p^{3/2+3 \sigma}},
\end{aligned} 
$$
It follows that 
\begin{align*}
R_p(t)=\frac{\X(p)\cos(t\log p)}{p^{1/2+\sigma}}-\frac{\cos(2t\log p)}{p^{1+2\sigma}}+\frac{\X(p)^2\cos(2t\log p)}{2p^{1+2\sigma}}+O\left(\frac{1}{p^{3/2+3 \sigma}}\right).
\end{align*}
By \eqref{eq:orthoganol-relation-GL2-RMf},  we see that
\begin{align}\label{eq:expectation-Rp(t)}
\E 
R_p(t)=-\frac{\cos(2t\log p)}{2p^{1+2\sigma}}+O\left(\frac{1}{p^{3/2+3 \sigma}}\right)
\end{align} 

\begin{align}\label{eq:expectation-Rp(t)2}
\mathbb{E} R_p(t)^2=\frac{\cos ^2(t \log p)}{p^{1+2 \sigma}}+O\left(\frac{1}{p^{3 / 2+3 \sigma}}\right)=\frac{1+\cos (2 t \log p)}{2 p^{1+2 \sigma}}+O\left(\frac{1}{p^{3 / 2+3 \sigma}}\right)
\end{align}

\begin{align}\label{eq:expectation-Rp(t1)Rp(t1+t2)}
\mathbb{E} R_p\left(t_1\right) R_p\left(t_1+t_2\right)=\frac{\cos \left(t_1 \log p\right) \cos \left(\left(t_1+t_2\right) \log p\right)}{p^{1+2 \sigma}}+O\left(\frac{1}{p^{3 / 2+3 \sigma}}\right).
\end{align}
For $j \geqslant 3$, we  have
\begin{align}\label{eq:Rp(t)-trivialbound}
\left|R_p(t)^j\right| \leqslant\left(\sum_{k=1}^{\infty} \frac{3^k}{p^{k(1 / 2+\sigma)}}\right)^j=\frac{3^j}{\left(p^{1 / 2+\sigma}-3\right)^j}.
\end{align}

Next, for primes $y \geqslant p>x \geqslant 40000\left(1+u^2\right)$ and $\sigma \geqslant-\frac{1}{\log y}$, we have $\frac{1}{p^{1 / 2+\sigma}} \leqslant \frac{e}{p^{1 / 2}}$, and hence $(2+|u|) / p^{1 / 2+\sigma} \leqslant 3e / 100$. From \eqref{eq:rewrite-localfactor},  \eqref{eq:expectation-Rp(t)}, \eqref{eq:expectation-Rp(t)2}, \eqref{eq:expectation-Rp(t1)Rp(t1+t2)} and \eqref{eq:Rp(t)-trivialbound},  it follows that
$$
\begin{aligned}
&\mathbb{E}\left|1-\frac{\X(p)}{p^{1 / 2+\sigma+it_1}}+\frac{1}{p^{1+2\sigma+2it_1}}\right|^{-2}\left|1-\frac{\X(p)}{p^{1 / 2+\sigma+i (t_1+t_2)}}+\frac{1}{p^{1+2\sigma+2i(t_1+t_2)}}\right|^{-i u}\\
= & 1-\frac{\cos \left(2 t_1 \log p\right)}{p^{1+2 \sigma}}-\frac{(i u / 2) \cos \left(2\left(t_1+t_2\right) \log p\right)}{p^{1+2 \sigma}}+O\left(\frac{1+|u|}{p^{3 / 2+3 \sigma}}\right) \\
& +\frac{\left(1+\cos \left(2 t_1 \log p\right)\right)+2 i u \cos \left(t_1 \log p\right) \cos \left(\left(t_1+t_2\right) \log p\right)}{p^{1+2 \sigma}} \\
& -\frac{\frac{u^2}{4}\left(1+\cos \left(2\left(t_1+t_2\right) \log p\right)\right)}{p^{1+2 \sigma}}+O\left(\frac{1+u^2}{p^{3 / 2+3 \sigma}}\right) \\
& +\mathbb{E} \sum_{j=3}^{\infty} \frac{\left(2 R_p\left(t_1\right)+i u R_p\left(t_1+t_2\right)\right)^j}{j!} \\
= & 1+\frac{1+i u c\left(t_1, t_2, p\right)-\left(u^2 / 4\right)\left(1+\cos \left(2\left(t_1+t_2\right) \log p\right)\right)}{p^{1+2 \sigma}}+D_p(u),
\end{aligned}
$$
where $D_p(u)$ satisfies $\left|D_p(u)\right| \ll \frac{1+|u|^3}{p^{3 / 2+3 \sigma}} \ll \frac{1+|u|^3}{p^{3 / 2}}$ and its derivative satisfies $\left|D_p^{\prime}(u)\right| \ll \frac{1}{p^{3 / 2+3 \sigma}} \ll \frac{1}{p^{3 / 2}}$ for $|u| \leqslant 1$.  In the last step, we have used that $\mathbb{E} \sum_{j=3}^{\infty} \frac{\left(2 R_p\left(t_1\right)+i u R_p\left(t_1+t_2\right)\right)^j}{j!} = O(\sum_{j=3}^{\infty} \frac{\left(2+|u|\right)^j3^j}{j!(p^{1/2+\sigma}-3)^j} )$.

Finally, note that we may rewrite our conclusions as
$$
\begin{aligned}
&\mathbb{E} \left|1-\frac{\X(p)}{p^{1 / 2+\sigma+it_1}}+\frac{1}{p^{1+2\sigma+2it_1}}\right|^{-2}\left|1-\frac{\X(p)}{p^{1 / 2+\sigma+i (t_1+t_2)}}+\frac{1}{p^{1+2\sigma+2i(t_1+t_2)}}\right|^{-iu} \\
=& 1+\frac{1+i u c\left(t_1, t_2, p\right)-\left(u^2 / 4\right)\left(1+\cos \left(2\left(t_1+t_2\right) \log p\right)\right)}{p^{1+2 \sigma}}+D_p(u) \\
= & \exp \left\{\frac{1+i u c\left(t_1, t_2, p\right)-\left(u^2 / 4\right)\left(1+\cos \left(2\left(t_1+t_2\right) \log p\right)\right)}{p^{1+2 \sigma}}
+T_p(u)\right\}
\end{aligned}
$$
where $T_p(u)$ again satisfies $\left|T_p(u)\right| \ll \frac{1+|u|^3}{p^{3 / 2}}$ and also $\left|T_p^{\prime}(u)\right| \ll \frac{1}{p^{3 / 2}}$ for $|u| \leqslant 1$. Since $\X$ is independent on distinct primes, we then deduce

\begin{align*}
&\ \ \ \ \mathbb{E}  \prod_{x<p \leqslant y}\left|1-\frac{\X(p)}{p^{1 / 2+\sigma+it_1}}+\frac{1}{p^{1+2\sigma+2it_1}}\right|^{-2}\left|1-\frac{\X(p)}{p^{1 / 2+\sigma+i (t_1+t_2)}}+\frac{1}{p^{1+2\sigma+2i(t_1+t_2)}}\right|^{-iu}\\
& = \exp \left\{\sum_{x<p \leqslant y} \frac{1+i u c\left(t_1, t_2, p\right)-\left(u^2 / 4\right)\left(1+\cos \left(2\left(t_1+t_2\right) \log p\right)\right)}{p^{1+2 \sigma}}+\sum_{x<p \leqslant y} T_p(u)\right\}
\end{align*}
which implies Lemma \ref{lemma:mean-square-randomEulerproducts-upperbd} in view of the standard Chebychev-type estimate $\sum_{p>x} 1 / p^{3 / 2} \ll 1 /(\sqrt{x} \log x)$.
\end{proof}

\begin{lemma}\label{lemma:mean-square-randomEulerproducts-lowerbd}
 If $\X$ is a $\GL_2$ random multiplicative function, then for any real real $t_1, t_2, u, v$, any real $40000\left(1+u^2+v^2\right) \leqslant x \leqslant y$ and any real $\sigma \geqslant-1 / \log y$, we have
$$
\begin{aligned}
&\mathbb{E} \prod_{x<p \leqslant y}\left|1-\frac{\X(p)}{p^{1 / 2+\sigma+it_1}}+\frac{1}{p^{1+2\sigma+2it_1}}\right|^{-(2+iu)}\left|1-\frac{\X(p)}{p^{1 / 2+\sigma+i (t_1+t_2)}}+\frac{1}{p^{1+2\sigma+2i(t_1+t_2)}}\right|^{-(2+iv)} \\
= &\exp \left\{\sum_{x<p \leqslant y} \left(\frac{(1+i u / 2)^2+(1+i v / 2)^2}{p^{1+2 \sigma}}+\frac{(2+i u)(2+i v) \cos (t_2 \log p)}{2 p^{1+2 \sigma}}\right.\right.\\
&\qquad \qquad \qquad  +\frac{(2+i u)(i u / 2) \cos \left(2 t_1 \log p\right)+(2+i v)(i v / 2) \cos \left(2\left(t_1+t_2\right) \log p\right)}{2 p^{1+2 \sigma}} \\
&\qquad \qquad \qquad\left.\left.+\frac{(2+i u)(2+i v) \cos \left(\left(2 t_1+t_2\right) \log p\right)}{2 p^{1+2 \sigma}}\right)+T(u, v)\right\}
\end{aligned}
$$
where $T(u, v)=T_{x, y, \sigma, t}(u, v)$ satisfies $|T(u, v)| \ll \frac{1+|u|^3+|v|^3}{\sqrt{x} \log x}$ and its partial derivatives satisfy $\left|\frac{\partial T(u, v)}{\partial u}\right| \ll \frac{1+u^2+v^2}{\sqrt{x} \log x},\left|\frac{\partial T(u, v)}{\partial v}\right| \ll \frac{1+u^2+v^2}{\sqrt{x} \log x}$ and $\left|\frac{\partial T(u, v)}{\partial u \partial v}\right| \ll \frac{1+|u|+|v|}{\sqrt{x} \log x}$.
\end{lemma}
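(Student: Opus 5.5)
The plan is to follow the proof of Lemma \ref{lemma:mean-square-randomEulerproducts-upperbd} almost verbatim, replacing the exponent pair $(2, iu)$ by $(2+iu, 2+iv)$; this is the $\GL_2$ analogue of the corresponding lemma in \cite{H20}. With $R_p(t)$ as before, the local factor at $p$ is
\[
\exp\{(2+iu)R_p(t_1)+(2+iv)R_p(t_1+t_2)\}=\sum_{j\ge0}\frac{\left((2+iu)R_p(t_1)+(2+iv)R_p(t_1+t_2)\right)^j}{j!}.
\]
We reuse the expansion
\[
R_p(t)=\frac{\X(p)\cos(t\log p)}{p^{1/2+\sigma}}-\frac{\cos(2t\log p)}{p^{1+2\sigma}}+\frac{\X(p)^2\cos(2t\log p)}{2p^{1+2\sigma}}+O(p^{-3/2-3\sigma})
\]
together with \eqref{eq:expectation-Rp(t)}, \eqref{eq:expectation-Rp(t)2}, \eqref{eq:expectation-Rp(t1)Rp(t1+t2)} and the tail bound \eqref{eq:Rp(t)-trivialbound}. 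The hypothesis $x\ge 40000(1+u^2+v^2)$ guarantees $(4+|u|+|v|)\cdot 3/(p^{1/2+\sigma}-3)$ is small, so the $j\ge3$ terms contribute $O((1+|u|^3+|v|^3)p^{-3/2-3\sigma})$.

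The key computation is the $p^{-1-2\sigma}$ coefficient of the expectation of the local factor. For $j=1$, this gives
\[
-(2+iu)\frac{\cos(2t_1\log p)}{2p^{1+2\sigma}}-(2+iv)\frac{\cos(2(t_1+t_2)\log p)}{2p^{1+2\sigma}}.
\]
For $j=2$, writing $\cos^2 a=(1+\cos 2a)/2$ and $\cos a\cos b=(\cos(a-b)+\cos(a+b))/2$, it gives
\[
\frac{(2+iu)^2(1+\cos 2t_1\log p)+(2+iv)^2(1+\cos 2(t_1+t_2)\log p)}{4p^{1+2\sigma}}+\frac{(2+iu)(2+iv)\left(\cos(t_2\log p)+\cos((2t_1+t_2)\log p)\right)}{2p^{1+2\sigma}}.
\]
Summing the two contributions, the constant parts become $(2+iu)^2/4=(1+iu/2)^2$ and likewise for $v$. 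The $\cos(2t_1\log p)$ terms combine to $\bigl((2+iu)^2/4-(2+iu)/2\bigr)\cos(2t_1\log p)=(2+iu)(iu/2)\cos(2t_1\log p)/2$, and symmetrically for $v$. This is exactly the claimed summand, and checking this algebra is the only new work.

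Next we write $1+A_p+D_p=\exp\{A_p+T_p\}$, using that $|A_p|\ll(1+u^2+v^2)/p$ is small. The error then satisfies $T_p\ll (1+|u|^3+|v|^3)/p^{3/2}$ (the $A_p^2$ term is $O((1+u^2+v^2)^2/p^2)$, which is absorbed since $p\gg 1+u^2+v^2$). Multiplying over $x<p\le y$ by independence, and summing with $\sum_{p>x}p^{-3/2}\ll 1/(\sqrt x\log x)$, gives the bound on $T(u,v)$.

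The main obstacle is the derivative bounds on $T(u,v)$. For these I would differentiate the series representation termwise in $u$ and $v$. Each $\partial_u$ lowers the polynomial degree in $(u,v)$ of the $j$-th term by one. It also brings down a factor $iR_p(t_1)$, of size $O(p^{-1/2-\sigma})$, while keeping the factorial decay, so that $\partial_u D_p\ll(1+u^2+v^2)p^{-3/2}$ and $\partial_u\partial_v D_p\ll(1+|u|+|v|)p^{-3/2}$. The logarithm step contributes derivatives of $A_p^2$-type terms, which are of lower order, and summing over $p$ then gives the stated bounds.
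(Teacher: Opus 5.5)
Your proposal is correct and follows the paper's proof essentially step for step. You expand $\exp\{(2+iu)R_p(t_1)+(2+iv)R_p(t_1+t_2)\}$, use the previously computed moments of $R_p$ for the $j=1,2$ terms and the trivial bound for $j\ge 3$, then exponentiate each local factor and multiply over primes using independence. Your algebra for the $p^{-1-2\sigma}$ coefficient is right, and so is your sketch of the derivative bounds for $D_p$ and $T_p$, which the paper only asserts without proof.
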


\begin{proof}
This lemma is a $\GL_2$ analogue of \cite[Lemma 6]{H20} (see also the computation in \cite[Section 5.3]{H20}) and can be proved in a similar way.

 As in lemma \ref{lemma:mean-square-randomEulerproducts-upperbd}, we  set $R_p(t):=-\Re \log \left(1-\frac{\X(p)}{p^{1 / 2+\sigma+it}}+\frac{1}{p^{1+2\sigma+2it}}\right).$
 Thus, we may rewrite
\begin{align*}
&\left|1-\frac{\X(p)}{p^{1 / 2+\sigma+it_1}}+\frac{1}{p^{1+2\sigma+2it_1}}\right|^{-(2+iu)}\left|1-\frac{\X(p)}{p^{1 / 2+\sigma+i (t_1+t_2)}}+\frac{1}{p^{1+2\sigma+2i(t_1+t_2)}}\right|^{-(2+iv)}  \\  =&\exp \left\{(2+iu) R_p(t_1)+(2+i v)R_p(t_1+t_2)\right\} \\
 =&1+\sum_{j=1}^{\infty} \frac{\left((2+iu) R_p(t_1)+(2+i v) R_p(t_1+t_2)\right)^j}{j!} .
\end{align*}
Next, note that for primes $y \geqslant p>x \geqslant 40000\left(1+u^2+v^2\right)$  and $\sigma \geqslant-\frac{1}{\log y}$, we have  $(4+|u|+|v|) / p^{1 / 2+\sigma} \leqslant 3e / 100$. From \eqref{eq:rewrite-localfactor},  \eqref{eq:expectation-Rp(t)}, \eqref{eq:expectation-Rp(t)2}, \eqref{eq:expectation-Rp(t1)Rp(t1+t2)} and \eqref{eq:Rp(t)-trivialbound},  it follows that
\begin{align*}&\mathbb{E}\left|1-\frac{\X(p)}{p^{1 / 2+\sigma+it_1}}+\frac{1}{p^{1+2\sigma+2it_1}}\right|^{-(2+iu)}\left|1-\frac{\X(p)}{p^{1 / 2+\sigma+i (t_1+t_2)}}+\frac{1}{p^{1+2\sigma+2i(t_1+t_2)}}\right|^{-(2+iv)}\\
= & 1-\frac{(2+iu)\cos \left(2 t_1 \log p\right)}{2p^{1+2 \sigma}}-\frac{(2+iv) \cos \left(2\left(t_1+t_2\right) \log p\right)}{2p^{1+2 \sigma}}+O\left(\frac{1+|u|+|v|}{p^{3 / 2+3 \sigma}}\right) \\
& +\frac{\left((1+iu/2)^2(1+\cos \left(2 t_1 \log p\right)\right)+(2 +i u )(2 +i v)\cos \left(t_1 \log p\right) \cos \left(\left(t_1+t_2\right) \log p\right)}{p^{1+2 \sigma}} \\
& +\frac{(1+iv/2)^2\left(1+\cos \left(2\left(t_1+t_2\right) \log p\right)\right)}{p^{1+2 \sigma}}+O\left(\frac{1+u^2+v^2}{p^{3 / 2+3 \sigma}}\right) \\
& +\mathbb{E} \sum_{j=3}^{\infty} \frac{\left((2+i u) R_p\left(t_1\right)+(2+i v) R_p\left(t_1+t_2\right)\right)^j}{j!} \\
= & 1+\frac{(1+i u / 2)^2+(1+i v / 2)^2}{p^{1+2 \sigma}}+\frac{(2+i u)(2+i v) \cos \left(t_2 \log p\right)}{2 p^{1+2 \sigma}} \\ 
&+\frac{(2+i u)(i u / 2) \cos \left(2 t_1 \log p\right)+(2+i v)(i v / 2) \cos \left(2\left(t_1+t_2\right) \log p\right)}{2 p^{1+2 \sigma}} \\
&+\frac{(2+i u)(2+i v) \cos \left(\left(2 t_1+t_2\right) \log p\right)}{2 p^{1+2 \sigma}}+D_p(u, v),
\end{align*}
where $D_p(u, v)$ satisfies $\left|D_p(u, v)\right| \ll \frac{1+|u|^3+|v|^3}{p^{3 / 2+\sigma}} \ll \frac{1+|u|^3+|v|^3}{p^{3 / 2}}$, and its partial derivatives satisfy $\left|\frac{\partial D_p(u, v)}{\partial u}\right| \ll \frac{1+u^2+v^2}{p^{3 / 2}},\left|\frac{\partial D_p(u, v)}{\partial v}\right| \ll \frac{1+u^2+v^2}{p^{3 / 2}}$ and $\left|\frac{\partial D_p(u, v)}{\partial u \partial v}\right| \ll \frac{1+|u|+|v|}{p^{3 / 2}}$. In the last step, we have used that $\mathbb{E} \sum_{j=3}^{\infty} \frac{\left((2+i u) R_p\left(t_1\right)+(2+i v) R_p\left(t_1+t_2\right)\right)^j}{j!} = O(\sum_{j=3}^{\infty} \frac{\left(4+|u|+|v|\right)^j3^j}{j!(p^{1/2+\sigma}-3)^j} )$.

The conclusion of Lemma \ref{lemma:mean-square-randomEulerproducts-lowerbd} now follows as in the proof of Lemma \ref{lemma:mean-square-randomEulerproducts-upperbd}, using the independence of $\X$ at distinct primes and the estimate $\sum_{p> x} 1 / p^{3 / 2} \ll 1 /(\sqrt{x} \log x)$.
\end{proof}

\begin{lemma}\label{lemma:mean-square-randomEulerproducts-high moment}
    Let $\X$ be a $\GL_2$ random multiplicative function. For any real $\alpha, \beta \geq 0$, any real $100(1 + \max\{\alpha^2 , \beta^2\}) \leq x \leq y$, and any real $\sigma \geq - 1/\log y$ and $t_1 , t_2$, we have
\begin{eqnarray}
&& \E \prod_{x < p \leq y} \left|1 -\frac{\X(p)}{p^{1/2+\sigma + it_1}}+\frac{1}{p^{1+2\sigma + 2it_1}}\right|^{-2\alpha} \left|1 -\frac{\X(p)}{p^{1/2+\sigma + it_2}}+\frac{1}{p^{1+2\sigma + 2it_2}}\right|^{-2\beta} \nonumber \\
& = & \exp\{\sum_{x < p \leq y} \frac{\alpha^2 + \beta^2 + (\alpha^2 - \alpha)\cos(2t_1 \log p) + (\beta^2 - \beta)\cos(2t_2 \log p)}{p^{1 + 2\sigma}} + \nonumber \\
&& + \sum_{x < p \leq y} \frac{2\alpha\beta(\cos((t_1 + t_2)\log p) + \cos((t_1 - t_2)\log p))}{p^{1 + 2\sigma}} + O(\frac{\max\{\alpha, \beta, \alpha^3 , \beta^3\}}{\sqrt{x} \log x}) \} . \nonumber
\end{eqnarray}
\end{lemma}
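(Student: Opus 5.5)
The plan follows the proofs of Lemmas \ref{lemma:mean-square-randomEulerproducts-upperbd} and \ref{lemma:mean-square-randomEulerproducts-lowerbd}; this lemma is the $\GL_2$ analogue of \cite[Lemma 1]{H20} (Euler product version). With $R_p(t)$ as before, we write the local factor at $p$ as
\[
\exp\{2\alpha R_p(t_1)+2\beta R_p(t_2)\}=1+\sum_{j\ge1}\frac{(2\alpha R_p(t_1)+2\beta R_p(t_2))^j}{j!}.
\]
By independence of the $\theta(p)$, the expectation of the product over $x<p\le y$ factorises. So it suffices to show that each local expectation equals $\exp\{A_p/p^{1+2\sigma}+T_p\}$, where $A_p$ is the numerator in the statement and $|T_p|\ll \max\{\alpha,\beta,\alpha^3,\beta^3\}p^{-3/2}$. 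Summing $T_p$ with $\sum_{p>x}p^{-3/2}\ll 1/(\sqrt x\log x)$ then gives the result.

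\textbf{Local computation.} For $j=1$ we use \eqref{eq:expectation-Rp(t)}, which gives the contribution $-(\alpha\cos(2t_1\log p)+\beta\cos(2t_2\log p))/p^{1+2\sigma}$. For $j=2$ we need $\E R_p(t_1)^2$ and $\E R_p(t_2)^2$ from \eqref{eq:expectation-Rp(t)2}, and the cross term $\E R_p(t_1)R_p(t_2)$. The cross term comes from the main part $\X(p)\cos(t\log p)/p^{1/2+\sigma}$ of $R_p$ and $\E\X(p)^2=1$, giving
\[
\frac{\cos(t_1\log p)\cos(t_2\log p)}{p^{1+2\sigma}}=\frac{\cos((t_1+t_2)\log p)+\cos((t_1-t_2)\log p)}{2p^{1+2\sigma}}.
\]
This is \eqref{eq:expectation-Rp(t1)Rp(t1+t2)} with $t_1+t_2$ replaced by $t_2$. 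Multiplying by $1/2!$ and the coefficients $4\alpha^2,8\alpha\beta,4\beta^2$ gives
\[
\frac{\alpha^2(1+\cos 2t_1\log p)+\beta^2(1+\cos 2t_2\log p)+2\alpha\beta(\cos((t_1+t_2)\log p)+\cos((t_1-t_2)\log p))}{p^{1+2\sigma}}.
\]
Adding the $j=1$ terms produces exactly $A_p/p^{1+2\sigma}$. The $O(p^{-3/2-3\sigma})$ errors in the $j=1,2$ expectations are bounded by $(\alpha+\beta+\alpha^2+\beta^2)p^{-3/2}$, using $p^{-\sigma}\le e$.

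\textbf{Tail and exponentiation.} For $j\ge3$ we use \eqref{eq:Rp(t)-trivialbound}. The tail is then at most
\[
\sum_{j\ge3}\frac{(2\alpha+2\beta)^j3^j}{j!\,(p^{1/2+\sigma}-3)^j}.
\]
The hypothesis $p>x\ge 100(1+\max\{\alpha^2,\beta^2\})$ gives $(\alpha+\beta)/p^{1/2+\sigma}\le c$ for a small absolute constant $c$. Hence the tail is $\ll(\alpha+\beta)^3p^{-3/2}$. This step needs the most care, since the constant $100$ is smaller than the $40000$ used earlier. If it is not enough, I would split $R_p$ into its leading term $\X(p)\cos(t\log p)/p^{1/2+\sigma}$, which is bounded by $2e/\sqrt p$, and a remainder of size $O(1/p)$. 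The higher-order terms could then be estimated more precisely, using $|\X(p)|\le 2$ and the condition on $x$ with adjusted constants.

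Finally, the local expectation has the form $1+A_p/p^{1+2\sigma}+D_p$ with $|A_p|/p\ll(1+\alpha^2+\beta^2)/p\le$ a small constant. So taking logarithms via $\log(1+z)=z+O(|z|^2)$ is legitimate. The quadratic error $(\alpha^2+\beta^2)^2/p^2$ must be absorbed into $\max\{\alpha,\beta,\alpha^3,\beta^3\}p^{-3/2}$. This holds because $(\alpha^2+\beta^2)/\sqrt p\ll \max\{\alpha,\beta\}$ when $p\gg1+\max\{\alpha^2,\beta^2\}$, and the extra factor $\alpha^2+\beta^2$ is at most $\max(\alpha^3,\beta^3)$ when $\max(\alpha,\beta)\ge1$. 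Independence and the Chebyshev tail bound then complete the argument.
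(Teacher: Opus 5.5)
Your proposal is correct and follows essentially the same route as the paper: expand $\exp\{2\alpha R_p(t_1)+2\beta R_p(t_2)\}$, read off the $j=1,2$ terms from the expansion of $R_p(t)$ using $\E\X(p)=0$ and $\E\X(p)^2=1$, bound the $j\ge 3$ tail via $|2\alpha R_p(t_1)+2\beta R_p(t_2)|\ll 1$, then conclude by independence and $\sum_{p>x}p^{-3/2}\ll 1/(\sqrt{x}\log x)$. Your worry about the constant $100$ is unnecessary, since only boundedness (not smallness) of $|2\alpha R_p(t_1)+2\beta R_p(t_2)|$ is needed and $p^{1/2+\sigma}\ge \sqrt{p}/e>3$ for $p>100$. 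One small correction for the logarithm step: bound $|A_p|\ll \alpha+\beta+\alpha^2+\beta^2$ rather than $1+\alpha^2+\beta^2$, so that the quadratic error stays $\ll \max\{\alpha,\beta,\alpha^3,\beta^3\}p^{-3/2}$ also when $\alpha,\beta$ are small.
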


\begin{proof}
This is a $\GL_2$ analogue of \cite[Euler Product Results 1,~2]{H19}.
As in Lemma~\ref{lemma:mean-square-randomEulerproducts-upperbd}, set
\[
R_p(t):=-\Re\log\left(
1-\frac{\X(p)}{p^{1/2+\sigma+it}}
+\frac{1}{p^{1+2\sigma+2it}}
\right).
\]
Thus, we may write
\begin{align*}
&\left|1-\frac{\X(p)}{p^{1/2+\sigma+it_1}}
+\frac{1}{p^{1+2\sigma+2it_1}}\right|^{-2\alpha}
\left|1-\frac{\X(p)}{p^{1/2+\sigma+it_2}}
+\frac{1}{p^{1+2\sigma+2it_2}}\right|^{-2\beta}\\
&=\exp\left\{2\alpha R_p(t_1)+2\beta R_p(t_2)\right\}\\
&=1+\sum_{j=1}^{\infty}
\frac{\left(2\alpha R_p(t_1)+2\beta R_p(t_2)\right)^j}{j!}.
\end{align*}

For primes $x<p\leq y$, our assumptions give
\[
\frac{1}{p^{1/2+\sigma}}\leq\frac{e}{\sqrt p}<\frac{e}{10},
\qquad
\frac{\alpha+\beta}{p^{1/2+\sigma}}
\leq
\frac{e(\alpha+\beta)}
{10\sqrt{1+\max\{\alpha^2,\beta^2\}}}
\leq\frac{e}{5}.
\]
As in
Lemma~\ref{lemma:mean-square-randomEulerproducts-upperbd}, we have
\[
R_p(t)
=
\frac{\X(p)\cos(t\log p)}{p^{1/2+\sigma}}
+\frac{(\X(p)^2-2)\cos(2t\log p)}{2p^{1+2\sigma}}
+O\left(\frac{1}{p^{3/2+3\sigma}}\right).
\]
In particular, $|R_p(t)|\ll p^{-1/2-\sigma}$, and hence
$|2\alpha R_p(t_1)+2\beta R_p(t_2)|\ll 1$.
It follows that
\[
\left|
\E\sum_{j=3}^{\infty}
\frac{\left(2\alpha R_p(t_1)+2\beta R_p(t_2)\right)^j}{j!}
\right|
\ll
\frac{(\alpha+\beta)^3}{p^{3/2+3\sigma}}.
\]

Using $\E\X(p)=0$ and $\E\X(p)^2=1$, we obtain
\begin{align*}
&\ \ \ \ \E
\left|1-\frac{\X(p)}{p^{1/2+\sigma+it_1}}
+\frac{1}{p^{1+2\sigma+2it_1}}\right|^{-2\alpha}
\left|1-\frac{\X(p)}{p^{1/2+\sigma+it_2}}
+\frac{1}{p^{1+2\sigma+2it_2}}\right|^{-2\beta}\\
&=1
-\frac{\alpha\cos(2t_1\log p)+\beta\cos(2t_2\log p)}
{p^{1+2\sigma}}\\
&\quad
+\frac{
2(\alpha\cos(t_1\log p)
+\beta\cos(t_2\log p))^2
}
{p^{1+2\sigma}}
+O\left(
\frac{\max\{\alpha,\beta,\alpha^3,\beta^3\}}
{p^{3/2+3\sigma}}
\right)\\
&=1+
\frac{
\alpha^2+\beta^2
+(\alpha^2-\alpha)\cos(2t_1\log p)
+(\beta^2-\beta)\cos(2t_2\log p)}
{p^{1+2\sigma}}\\
&\quad+
\frac{
2\alpha\beta\bigl(
\cos((t_1+t_2)\log p)+\cos((t_1-t_2)\log p)
\bigr)}
{p^{1+2\sigma}}+
O\left(
\frac{\max\{\alpha,\beta,\alpha^3,\beta^3\}}
{p^{3/2+3\sigma}}
\right)\\
&=\exp \{
\frac{
\alpha^2+\beta^2
+(\alpha^2-\alpha)\cos(2t_1\log p)
+(\beta^2-\beta)\cos(2t_2\log p)}
{p^{1+2\sigma}}\\
&\quad+
\frac{
2\alpha\beta\bigl(
\cos((t_1+t_2)\log p)+\cos((t_1-t_2)\log p)
\bigr)}
{p^{1+2\sigma}}+
O\left(
\frac{\max\{\alpha,\beta,\alpha^3,\beta^3\}}
{p^{3/2+3\sigma}}
\right)\}.
\end{align*}
The conclusion now follows by independence  of $\X$  at distinct primes and
the estimate $\sum_{p> x} 1 / p^{3 / 2} \ll 1 /(\sqrt{x} \log x)$.
\end{proof}

\smallskip

\subsection{Moments estimates for probabilistic models}\label{subsecmeans}
We will require
the following two lemmas concerning estimates for even moments of quantities involving the $\GL_2$ random multiplicative function. 

\begin{lemma}\label{lemma:fourth-moment-random-model}
Let \(\mathbb X\) be a $\GL_2$ random multiplicative function. Let $(c(n))_{n \leq x}$ be any sequence of complex numbers such that $|c(n)|\leq 1$. Then, for every $x\geq 1$, we have
\[
\mathbb E\left|
    \sum_{n\leq x}c(n)\mathbb X(n)
\right|^{4}
\ll x^{2}(\log 2x)^{4}.
\]
\end{lemma}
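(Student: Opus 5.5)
Since $\mathbb X$ is real-valued, the plan is to expand the fourth power and reduce to second moments via the Hecke relations. We have
\[
\mathbb E\Bigl|\sum_{n\le x}c(n)\mathbb X(n)\Bigr|^4
=\sum_{n_1,n_2,n_3,n_4\le x} c(n_1)c(n_2)\overline{c(n_3)}\,\overline{c(n_4)}\;
\mathbb E\bigl(\mathbb X(n_1)\mathbb X(n_2)\mathbb X(n_3)\mathbb X(n_4)\bigr).
\]
I will apply the Hecke relation $\mathbb X(n_1)\mathbb X(n_2)=\sum_{d\mid(n_1,n_2)}\mathbb X(n_1n_2/d^2)$, and likewise to the pair $(n_3,n_4)$ with a divisor $e\mid(n_3,n_4)$. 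The orthogonality relation \eqref{eq:orthoganol-relation-GL2-RMf} then gives
\[
\mathbb E\bigl(\mathbb X(n_1)\cdots\mathbb X(n_4)\bigr)=\#\Bigl\{(d,e):\ d\mid(n_1,n_2),\ e\mid(n_3,n_4),\ \tfrac{n_1n_2}{d^2}=\tfrac{n_3n_4}{e^2}\Bigr\}.
\]
This count is a nonnegative integer. Using $|c(n)|\le1$ and the triangle inequality, the fourth moment is therefore bounded by the number $N(x)$ of sextuples $(n_1,n_2,n_3,n_4,d,e)$ with $n_i\le x$ and satisfying these conditions.

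To count, I will write $n_1=da_1$, $n_2=da_2$, $n_3=ea_3$, $n_4=ea_4$. The constraint becomes $a_1a_2=a_3a_4=:m$, with $a_i\le x$, $d\le x/\max(a_1,a_2)$ and $e\le x/\max(a_3,a_4)$. (Dropping the requirement that the divisor structure is exactly as above only enlarges the count, which is harmless for an upper bound.) Hence
\[
N(x)\le \sum_{m\le x^2}\Bigl(\sum_{\substack{a_1a_2=m\\ a_1,a_2\le x}}\frac{x}{\max(a_1,a_2)}\Bigr)^2
\le x^2\sum_{m}\tau(m)\sum_{\substack{a_1a_2=m\\ a_1,a_2\le x}}\frac{1}{\max(a_1,a_2)^2},
\]
where the second inequality is Cauchy--Schwarz over the at most $\tau(m)$ factorisations of $m$.

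Next I will use $\tau(a_1a_2)\le\tau(a_1)\tau(a_2)$ and, by symmetry, restrict to $a_1\le a_2\le x$. This bounds the sum by
\[
2x^2\sum_{a_2\le x}\frac{\tau(a_2)}{a_2^2}\sum_{a_1\le a_2}\tau(a_1)\ll x^2\sum_{a_2\le x}\frac{\tau(a_2)\log(2a_2)}{a_2}\ll x^2(\log 2x)^3.
\]
This is even slightly better than the claimed $x^2(\log 2x)^4$. The only delicate point is the reduction of the expectation to a pure counting problem, namely checking that the coefficients $b_1$ from the proof of Lemma \ref{lemma: Eigenvalue like random model-holomorphic} are nonnegative so that absolute values can be taken inside. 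After that, the divisor-sum estimates are routine. If one prefers not to use Cauchy--Schwarz, one can instead bound the inner sum crudely by $x\tau(m)$, which costs an extra logarithm and still lands within $x^2(\log 2x)^4$.
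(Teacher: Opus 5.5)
Your proof is correct and is essentially the paper's argument. The paper writes $S(x)^2=\sum_{r\le x^2}A_x(r)\mathbb X(r)$ via the Hecke relation, uses orthogonality to get $\sum_r|A_x(r)|^2$, and parametrises $m=da$, $n=db$ with $d\le x/\max\{a,b\}$ exactly as you do. The only real difference is the final divisor sum. The paper bounds $|A_x(r)|\le x\tau(r)/\sqrt r$ via $\max\{a,b\}\ge\sqrt{ab}$ and gets $x^2\sum_{r\le x^2}\tau(r)^2/r\ll x^2(\log 2x)^4$, whereas your Cauchy--Schwarz plus $\tau(a_1a_2)\le\tau(a_1)\tau(a_2)$ step saves one logarithm.

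There is one slip in your closing aside: the crude bound for the inner sum must be $x\tau(m)/\sqrt m$ (from $\max(a_1,a_2)\ge\sqrt m$). The bound $x\tau(m)$ alone would give $x^2\sum_{m\le x^2}\tau(m)^2\asymp x^4(\log x)^3$. With that correction, the fallback is precisely the paper's route.
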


\begin{proof}
Let
\[
S(x):=\sum_{n\leq x}c(n)\mathbb X(n).
\]
By the Hecke relation,
\[
S(x)^{2}
=
\mathop{\sum\sum}_{m,n\leq x}c(m)c(n)
\sum_{\substack{d\mid(m,n)}}
\mathbb X\left(\frac{mn}{d^{2}}\right).
\]
For \(r\geq 1\), define
\[
A_x(r)
:=\mathop{\sum\sum}_{m,n\leq x}c(m)c(n)
\sum_{\substack{d\mid(m,n)\\mn/d^{2}=r}}1.
\]
Then
\[
S(x)^{2}
=
\sum_{r\leq x^{2}}A_x(r)\mathbb X(r).
\]
The orthogonality relation \eqref{eq:orthoganol-relation-GL2-RMf} yields
\[
\begin{aligned}
\mathbb E|S(x)|^{4}
&=
\mathbb E\left|
    \sum_{r\leq x^{2}}A_x(r)\mathbb X(r)
\right|^{2} =
\sum_{r\leq x^{2}}\left|A_x(r)\right|^{2}.
\end{aligned}
\]

To evaluate \(A_x(r)\), write
\(
m=da,n=db.
\)
The condition \(mn/d^{2}=r\) becomes \(ab=r\), and the inequalities
\(m,n\leq x\) are equivalent to
\[
d\leq \frac{x}{\max\{a,b\}}.
\]
Hence
\[
\left|A_x(r)\right|
\leq
\mathop{\sum}_{\substack{a,b\geq 1\\ab=r}}
\left\lfloor
    \frac{x}{\max\{a,b\}}
\right\rfloor\leq
\frac{x\tau(r)}{\sqrt r}.
\]
It follows that
\[
\mathbb E|S(x)|^{4}
\leq
x^{2}\sum_{r\leq x^{2}}\frac{\tau(r)^{2}}{r}\ll x^{2}(\log 2x)^{4}.
\]
The proof of Lemma \ref{lemma:fourth-moment-random-model} is finished.
\end{proof}
\begin{lemma}\label{lemma:2l-th-moment-prime-sums}
Let \(\X\) be a $\GL_2$ random multiplicative function, and let
\(a(p),a(p^2)\) be any complex number with
\(
    |a(p)|\leq 1,
    |a(p^2)|\leq 1.
\) Let $P\geq 1$ be sufficiently
large.
For any integer \(l\geq \log\log P\),
\[
\mathbb{E}
\left|
    \sum_{p\leq P}
    \left(
        \frac{a(p)\X(p)}{\sqrt p}
        +
        \frac{a(p^2)(\X(p^2)-1)}{p}
    \right)
\right|^{2l}
\ll
l!\bigl(80\log\log P\bigr)^l.
\]
\end{lemma}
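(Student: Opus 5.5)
Write the sum as $Z=\sum_{p\le P}Y_p$ with
\[
Y_p:=\frac{a(p)\X(p)}{\sqrt p}+\frac{a(p^2)(\X(p^2)-1)}{p}.
\]
The $Y_p$ are independent across primes, have mean zero (since $\E\X(p)=\E\X(p^2)=0$ by \eqref{eq:orthoganol-relation-GL2-RMf}), and are bounded: $|\X(p)|\le 2$ and $|\X(p^2)|\le 3$, so $|Y_p|\le 2/\sqrt p+4/p$. The plan is a Khintchine/Marcinkiewicz--Zygmund type moment bound for sums of independent, centred, bounded variables whose variances are roughly $1/p$, so that the total variance is about $\log\log P$.

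First reduce to real variables. Write $Z=U+iV$ with $U=\sum_p\Re Y_p$ and $V=\sum_p\Im Y_p$. Since $|Z|^{2l}\le 2^{l}(U^{2l}+V^{2l})$, it suffices to show $\E U^{2l}\ll l!\,(40\log\log P)^l$, and the same for $V$. Both are sums of independent, mean-zero, real variables $W_p$ with $|W_p|\le 2/\sqrt p+4/p$. Expand $U^{2l}$ multinomially and take expectations: by independence and $\E W_p=0$, only terms in which every prime appears at least twice survive. So
\[
\E U^{2l}=\sum_{r}\ \sum_{\substack{j_1+\cdots+j_r=2l\\ j_i\ge 2}}\frac{(2l)!}{r!\,j_1!\cdots j_r!}\sum_{p_1,\dots,p_r \text{ distinct}}\prod_i \E W_{p_i}^{j_i}.
\]
Bound $|\E W_p^{j}|\le (2/\sqrt p+4/p)^{j}\le 36^{j/2}p^{-j/2}$. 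Summing over primes, each index with $j_i=2$ contributes at most $36\sum_{p\le P}1/p\le 36\log\log P+O(1)$, and each index with $j_i\ge3$ contributes $O(C^{j_i})$, which is bounded since $\sum_p p^{-3/2}$ converges.

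The dominant configuration is $r=l$ with all $j_i=2$. It contributes at most
\[
\frac{(2l)!}{l!\,2^l}\,(\sigma^2)^l\le l!\,2^l(\sigma^2)^l,
\qquad \sigma^2:=\sum_{p\le P}\E W_p^2\le \log\log P+O(1),
\]
using the sharper bound $\E W_p^2\le\E|Y_p|^2=1/p+O(p^{-3/2})$ from orthogonality. This gives $l!\,(2\log\log P+O(1))^l$.

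The main obstacle is the configurations with some $j_i\ge3$. For these I would show the total is dominated by the main term, using $l\ge\log\log P$. A configuration with $s$ indices of size $\ge 3$ and $r-s$ pairs has $r\le l-s/2$, so it loses a factor of roughly $\sigma^{2}$ per lost pair but gains a combinatorial factor of at most about $(Cl)^{j_i/2}$. Since $l\ge\log\log P\asymp\sigma^2$, each trade costs at most a factor $C l/\sigma^2$ against the $l!$ growth, and the standard bound for moments of sums of bounded independent variables applies. Concretely, I would use $\E U^{2l}\le (2l)!/\bigl(l!\,2^l\bigr)\cdot(\sigma^2+l\max_p|W_p|^2)^l$ (a Rosenthal-type inequality), or argue via $\E e^{\lambda U}\le\exp(\lambda^2\sigma^2)$ for $|\lambda|\le 1/4$, bounding $e^{x}\le1+x+x^2$ for $|x|\le1$. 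Then $\E U^{2l}\le (2l)!\,\lambda^{-2l}\,\E\cosh(\lambda U)$. Optimizing at $\lambda\approx\sqrt{l/\sigma^2}$ is admissible only when $l\lesssim\sigma^2$. In the regime $l\ge\sigma^2$, I would instead take $\lambda=1/4$ and note that $l!\,(80\log\log P)^l$ is at least $(2l)!\,4^{2l}e^{\sigma^2}\ge\ldots$ up to constants, using $\sigma^2\le l$. Checking that the constant 80 absorbs everything, in particular the factor $2^l$ from splitting into real and imaginary parts, is the step I expect to need the most care.
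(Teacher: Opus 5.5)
There is a genuine gap, in two places.

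First, your summands are not centred. From $\mathbb{E}\mathbb{X}(p^2)=0$ you conclude $\mathbb{E}Y_p=0$, but the $p^2$-term involves $\mathbb{X}(p^2)-1=\mathbb{X}(p)^2-2$, whose mean is $-1$. So $\mathbb{E}Y_p=-a(p^2)/p$, and the terms with some $j_i=1$ in your multinomial expansion do not vanish. Moreover $\mathbb{E}Z=-\sum_{p\le P}a(p^2)/p$ has modulus $\log\log P+O(1)$ when $a(p^2)\equiv 1$.

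This is not cosmetic. By Jensen, $\mathbb{E}|Z|^{2l}\ge|\mathbb{E}Z|^{2l}\asymp(\log\log P)^{2l}$, which for fixed $l$ is far larger than $l!\,(80\log\log P)^l$. So the hypothesis $l\ge\log\log P$ is needed precisely because of this non-centred $p^2$-part, and any argument treating $Z$ as a centred sum would prove too much.

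The paper writes $Z=S_P+T_P$, with $T_P$ the $p^2$-part, and uses $|S_P+T_P|^{2l}\le 2^{2l-1}(|S_P|^{2l}+|T_P|^{2l})$. It then bounds $T_P$ trivially: $|T_P|\le 2.1\log\log P$, so $|T_P|^{2l}\le(4.41\,l\log\log P)^l\le l!\,(20\log\log P)^l$, using $l\ge\log\log P$ and $l!\ge(l/e)^l$.

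Second, the step you flag as the main obstacle is left open, and neither tool you propose can give the bound in the regime that matters, namely $l$ much larger than $\log\log P$. In the proof of the key Proposition the lemma is used, via the even moment estimate, with $l$ of the size of $S$, which is enormous compared with $\log\log P$.

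Both of your routes lose in this regime:
\begin{itemize}
\item The Rosenthal-type bound $\frac{(2l)!}{l!\,2^l}(\sigma^2+l\max_p|W_p|^2)^l$ is at least $l!\,(c\,l)^l$, with $c=\max_p|W_p|^2$.
\item The exponential-moment bound restricted to $|\lambda|\le 1/4$ gives at best $(2l)!\,16^l\ge l!\,(16l/e)^l$.
\end{itemize}
Both exceed $l!\,(80\log\log P)^l$ by a factor of shape $(l/(C\log\log P))^l$. So the comparison you assert at the end goes the wrong way.

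What is needed is a sub-Gaussian estimate valid for \emph{every} real $\lambda$. Hoeffding's lemma gives $\mathbb{E}e^{\lambda W}\le e^{\lambda^2B^2/2}$ for all $\lambda$ whenever $W$ is centred with values in $[-B,B]$, with no restriction coming from small primes. Apply it to the real and imaginary parts of $S_P=\sum_{p\le P}a(p)\mathbb{X}(p)/\sqrt p$, with $B_p=2/\sqrt p$ and $V=\sum_{p\le P}B_p^2\le 5\log\log P$. This gives $\mathbb{P}(|S_P|>u)\le 4e^{-u^2/(4V)}$. Integrating the tail yields $\mathbb{E}|S_P|^{2l}\le 4\,l!\,(4V)^l\le 4\,l!\,(20\log\log P)^l$ for every $l$.

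This is exactly the paper's argument; symmetrisation plus Khintchine's inequality would also work. Together with the bound for $T_P$ and the factor $2^{2l-1}$, it gives $l!\,(80\log\log P)^l$.
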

\begin{proof}

    Let
\[
    \W_p
    :=
    \frac{a(p)\X(p)}{\sqrt{p}}
  ,~
    S_P:=\sum_{p\leq P}\W_p,~T_P:=\sum_{p\leq P}  \frac{a(p^2)(\X(p^2)-1)}{p}
\]
Hence, we need to show
\[
\mathbb{E}
\left|
     S_P+T_P
\right|^{2l}
\ll
l!\bigl(80\log\log P\bigr)^l.
\]
By Jensen's inequality, we get 
\[
\mathbb{E}
\left|
     S_P+T_P
\right|^{2l}
\leq
2^{2l-1}(\mathbb{E}\left|
     S_P
\right|^{2l}+\mathbb{E}\left|
     T_P
\right|^{2l}).
\]
By definition of \(\X\), we see that
\(
    |\X(p)|\leq 2,~
    |\X(p^2)-1|
    =
    |4\cos^2\theta_p-2|
    \leq 2.
\)
 Using Mertens' estimate together with $(l)! \geq (l/e)^{l}$, we see for \(l\geq \log\log P\) that 
\[\mathbb{E}\left|
     T_P
\right|^{2l}\leq (2.1\log \log P)^{2l}\leq l!\bigl(20\log\log P\bigr)^l\]

By definition, the random
variables \((\W_p)_{p~ \rm prime}\) are independent.
It follows from \eqref{eq:orthoganol-relation-GL2-RMf} that
\(
    \mathbb{E}\W_p=0.
\)

Consequently,
\[
    |\W_p|
    \leq
    \frac{2}{\sqrt{p}}
    =:B_p.
\]
Set
\(
    V_P:=\sum_{p\leq P}B_p^2.
\)
Note that
\[
\begin{aligned}
V_P
=
4\sum_{p\leq P}\frac{1}{p}
=
4\log\log P+O(1)\leq
5\log\log P
\end{aligned}
\]
For \(j=1,2\), let
\(
    \U_{p,1}:=\Re \W_p,~
    \U_{p,2}:=\Im \W_p.
\)
The random variables \(\U_{p,1}\) are independent and satisfy
\[
    \mathbb{E}\U_{p,1}=0 \quad\text{and}\quad -B_p\leq \U_{p,1}\leq B_p.
\]
Applying Hoeffding's inequality (see Gut
\cite[Chapter~3, Theorem~1.3]{G13}) yields
\[
\mathbb{P}
\left(
    \left|
        \Re S_P
    \right|>u
\right)=
\mathbb{P}
\left(
    \left|
        \sum_{p\leq P}\U_{p,1}
    \right|>u
\right)
\leq
2\exp\left(-\frac{u^2}{2V_P}\right).
\]
Similarly, we have \[
\mathbb{P}
\left(
    \left|
        \Im S_P
    \right|>u
\right)=
\mathbb{P}
\left(
    \left|
        \sum_{p\leq P}\U_{p,2}
    \right|>u
\right)
\leq
2\exp\left(-\frac{u^2}{2V_P}\right).
\]
Since $ |S_P|>u$ implies
\[
    |\Re S_P|>\frac{u}{\sqrt 2}
    \quad\text{or}\quad
    |\Im S_P|>\frac{u}{\sqrt 2},
\]
we see that
\[
    \mathbb{P}(|S_P|>u)
    \leq
    4\exp\left(-\frac{u^2}{4V_P}\right).
\]
Thus, we have
\begin{align*}
\mathbb{E}|S_P|^{2l}
&=
2l\int_0^\infty
u^{2l-1}\mathbb{P}(|S_P|>u)\,du
\\
&\leq
8l\int_0^\infty
u^{2l-1}
\exp\left(-\frac{u^2}{4V_P}\right)\,du
\\
&\leq 4
l!\bigl(20\log\log P\bigr)^l.
\end{align*}
It follows that
\[
\begin{aligned}
\mathbb{E}|S_P+T_P|^{2l}
&\leq
2^{2l-1}
\left(
4l!(20\log\log P)^l+l!(20\log\log P)^l
\right)\ll
\,l!(80\log\log P)^l.
\end{aligned}
\]
This proves the lemma.
\end{proof}

\section{Mean value estimates for Hecke eigenvalues}

The following lemma provides an even moment estimate for sums of Hecke eigenvalues of holomorphic Hecke cusp form. This is a $\GL_2$ analogue of \cite[Lemma 1]{H23}, which concerns an even moment estimate for character sums.
\begin{lemma}[Even moment estimate for holomorphic Hecke cusp form 1]\label{lemma: evenmomentlem-modular-1}
Let $x\geq1$, and let $P$ be a
large real number. Let $(c(n))_{n \leq x}$ be any complex numbers such that $|c(n)|\leqslant 1$. 
 Let $Q(\lambda_f) :=     \sum_{p\le P}\left(
\frac{a(p)\lambda_f(p)}{\sqrt p}
+
\frac{a(p^2)(\lambda_f(p^2)-1)}{p}    \right)$, where the $a(p)$ and  $a(p^2)$ are any complex numbers such that $|a(p)|\leqslant 1$ and  $|a(p^2)|\leqslant 1$. Let \(l\) be an integer such that \(l\geq \log\log P\).

Then for $ xP^{2l} < k/100$,  we have
$$ \E^{\rm holo} \Biggl|\sum_{n \leq x} c(n) \lambda_f(n) \Biggr|^{2} |Q(\lambda_f)|^{2l} \ll x\log (2x)(l!)\left(320\log \log P\right)^{l} $$
\end{lemma}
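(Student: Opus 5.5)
The plan is to transfer the harmonic average to the $\GL_2$ random model and then separate the two factors. For this, expand $|\sum_{n\le x}c(n)\lambda_f(n)|^2\,|Q(\lambda_f)|^{2l}$ as a polynomial in Hecke eigenvalues. Since the $\lambda_f(n)$ are real, $|Q(\lambda_f)|^{2l}=Q(\lambda_f)^l\overline{Q(\lambda_f)}^l$, and $\overline{Q(\lambda_f)}$ is $Q$ with conjugated coefficients. Expanding gives a finite linear combination of products $\lambda_f(n_1)\lambda_f(n_2)\prod_{i\le 2l}\lambda_f(m_i)$, where $n_1,n_2\le x$ and each $m_i\in\{p,p^2,1\}$ with $p\le P$. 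The constant $1$ comes from the $-1$ in $\lambda_f(p^2)-1$. The argument of such a product is at most $x^2P^{4l}$, which is below $k^2/10^4$ by the hypothesis $xP^{2l}<k/100$.

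Lemma~\ref{lemma: Eigenvalue like random model-holomorphic} therefore replaces each expectation by the corresponding $\mathbb{E}$ of $\X$'s, with error $O((\tau\cdots\tau)^2e^{-k})$. To control the total error, bound the sum of the absolute values of the coefficients. This gives roughly $x^2\cdot(\sum_p 3/\sqrt p)^{2l}$ times a divisor factor of at most $(\tau(n_1)\tau(n_2))^2 3^{4l}$. Since $P^{2l}<k$, this is at most $e^{O(\log k)}k^{O(1)}$, so the whole error is $O(e^{-k/2})$ and hence negligible. We conclude
\[
\E^{\rm holo}\Bigl|\sum_{n\le x}c(n)\lambda_f(n)\Bigr|^2|Q(\lambda_f)|^{2l}=\mathbb{E}\Bigl|\sum_{n\le x}c(n)\X(n)\Bigr|^2|Q(\X)|^{2l}+O(e^{-k/2}).
\]

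In the random model, apply Cauchy--Schwarz:
\[
\mathbb{E}|S|^2|Q|^{2l}\le(\mathbb{E}|S|^4)^{1/2}(\mathbb{E}|Q|^{4l})^{1/2}.
\]
Lemma~\ref{lemma:fourth-moment-random-model} gives $(\mathbb{E}|S|^4)^{1/2}\ll x(\log 2x)^2$. Lemma~\ref{lemma:2l-th-moment-prime-sums} with $2l\ge\log\log P$ gives $(\mathbb{E}|Q|^{4l})^{1/2}\ll((2l)!)^{1/2}(80\log\log P)^l$. Using $(2l)!\le 4^l(l!)^2$, this becomes $l!\,(160\log\log P)^l$, which is within the claimed $(320\log\log P)^l$.

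The main obstacle is that this route loses a factor $\log 2x$: it gives $x(\log 2x)^2$ rather than $x\log 2x$. I would first try to remove it by a sharper fourth-moment count. One could use Hölder with exponents $(1/(1-\epsilon),1/\epsilon)$ and hypercontractivity-type bounds on $S$, but the cleaner fix is probably to avoid the fourth moment altogether.

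To do that, condition on the primes $p\le P$. Write $n=ab$ with $a$ $P$-smooth and $b$ having all prime factors greater than $P$. Then $\X(n)=\X(a)\X(b)$ with independent factors, and orthogonality in $b$ gives
\[
\mathbb{E}|S|^2|Q|^{2l}=\sum_b\mathbb{E}\Bigl|\sum_{a\le x/b}c(ab)\X(a)\Bigr|^2|Q|^{2l}.
\]
Each inner term would be bounded by Cauchy--Schwarz together with the fourth-moment lemma applied to the length $x/b$. Summing over $b$ should give $x\log(2x)$ up to the factor $l!(C\log\log P)^l$. I would settle whichever of these bookkeeping routes gives exactly $x\log 2x$. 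In any case the structure is: reduction via Lemma~\ref{lemma: Eigenvalue like random model-holomorphic}, then Hölder, then Lemmas~\ref{lemma:fourth-moment-random-model} and~\ref{lemma:2l-th-moment-prime-sums}.
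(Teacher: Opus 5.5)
Your transfer to the random model is correct and matches the paper: every product has argument at most $x^2P^{4l}<k^2/10^4$, and the accumulated error is polynomial in $k$ times $e^{-k}$. The gap is in the last step.

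Two-factor Cauchy--Schwarz spends the fourth moment on all of $|S|^2$. So what you actually prove is $x(\log 2x)^2\,l!\,(160\log\log P)^l$, which is weaker than the statement by a factor $\log 2x$. Neither remedy you list is carried out.

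The conditioning remedy cannot work as sketched. Your method bounds each term in $\sum_b$ separately and sums the bounds, so the result is at least the bound for $b=1$ alone. That term is $\mathbb{E}\bigl|\sum_{a\le x,\ a\ P\text{-smooth}}c(a)\mathbb{X}(a)\bigr|^2|Q|^{2l}$. Bounding it by Cauchy--Schwarz and Lemma~\ref{lemma:fourth-moment-random-model} at length $x$ already gives $x(\log 2x)^2$.

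The missing idea is to use the fourth moment on only half of $|S|^2$. Split $|S|^2=|S|\cdot|S|$ and apply H\"older with exponents $(2,4,4)$, which is what the paper does:
\[
\mathbb{E}|S|^2|Q|^{2l}\le\bigl(\mathbb{E}|S|^2\bigr)^{1/2}\bigl(\mathbb{E}|S|^4\bigr)^{1/4}\bigl(\mathbb{E}|Q|^{8l}\bigr)^{1/4}.
\]
Each factor is then bounded as follows.
\begin{itemize}
\item Orthogonality gives $\mathbb{E}|S|^2=\sum_{n\le x}|c(n)|^2\le x$, with no logarithm.
\item Lemma~\ref{lemma:fourth-moment-random-model} gives $(\mathbb{E}|S|^4)^{1/4}\ll x^{1/2}\log(2x)$.
\item Lemma~\ref{lemma:2l-th-moment-prime-sums} with $4l$ in place of $l$ applies, since $4l\ge\log\log P$. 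Together with $(4l)!\le 4^{4l}(l!)^4$ it gives $(\mathbb{E}|Q|^{8l})^{1/4}\ll 4^l\,l!\,(80\log\log P)^l$.
\end{itemize}
The product is exactly $x\log(2x)\,l!\,(320\log\log P)^l$, which explains the constant $320=4\cdot 80$.

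Equivalently, this is your ``H\"older with a small exponent'' idea with exponents $(4/3,4)$, combined with the interpolation $\|S\|_{8/3}^2\le\|S\|_2\,\|S\|_4$. No hypercontractivity and no sharper fourth-moment count are needed.
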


Using a different method, we are able to obtain another upper bound.

\begin{lemma}[Even moment estimate for holomorphic Hecke cusp form 2]\label{lemma: evenmomentlem-modular-2}
Let $P$ be a
large real number, and let $(c(n))_{n \leq x}$ be any complex numbers such that $|c(n)|\leqslant 1$. 
%Let $\mathcal{P}$ be any finite set of primes, let $\mathcal{Q}$ be any (non-empty) set consisting of some elements of $\mathcal{P}$ and squares of elements of $\mathcal{P}$, and write $U := \max\{q \in \mathcal{Q}\}$.
 Let $Q(\lambda_f) := \sum_{p\le P}
\left(
\frac{a(p)\lambda_f(p)}{\sqrt p}
+
\frac{a(p^2)(\lambda_f(p^2)-1)}{p}    \right)$, where the $a(p)$ and  $a(p^2)$ are any complex numbers such that $|a(p)|\leqslant 1$ and  $|a(p^2)|\leqslant 1$.  Let $l$ be a natural number.

Then for $1\leq x < k/100$, we have
$$ \E^{\rm holo} \Biggl|\sum_{n \leq x} c(n) \lambda_f(n) \Biggr|^{2} |Q(\lambda_f)|^{2l} \ll 10^{2l}x\left(\frac{\sqrt P}{\log P}\right)^{2l} $$
\end{lemma}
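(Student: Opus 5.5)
Since the harmonic weights $\omega_f$ are positive, we can pull out a uniform pointwise bound for $|Q(\lambda_f)|$ and then evaluate the remaining second moment with Lemma~\ref{lemma:orthogonality-holomorphic}. The point of this lemma, as opposed to Lemma~\ref{lemma: evenmomentlem-modular-1}, is that it needs no restriction relating $P^{2l}$ to $k$; the price is the crude factor $(\sqrt P/\log P)^{2l}$.

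\emph{Step 1 (pointwise bound for $Q$).} By Deligne's bound, $|\lambda_f(p)|\leq 2$ for every $f\in\mathcal H_k$. Writing $\lambda_f(p)=2\cos\theta_f(p)$, the Hecke relation gives $\lambda_f(p^2)-1=\lambda_f(p)^2-2=2\cos(2\theta_f(p))$, so $|\lambda_f(p^2)-1|\leq 2$. Together with $|a(p)|,|a(p^2)|\leq 1$ this gives, uniformly in $f$,
\[
|Q(\lambda_f)|\leq 2\sum_{p\leq P}\frac{1}{\sqrt p}+2\sum_{p\leq P}\frac1p .
\]
By the prime number theorem and partial summation, $\sum_{p\leq P}p^{-1/2}=(2+o(1))\sqrt P/\log P$. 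By Mertens, $\sum_{p\leq P}p^{-1}=\log\log P+O(1)=o(\sqrt P/\log P)$. Hence, for $P$ sufficiently large, $|Q(\lambda_f)|\leq 5\sqrt P/\log P$, and therefore $|Q(\lambda_f)|^{2l}\leq 10^{2l}(\sqrt P/\log P)^{2l}$ for every natural number $l$.

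\emph{Step 2 (reduction to a second moment).} Since $\omega_f>0$, Step 1 yields
\[
\E^{\rm holo}\Biggl|\sum_{n\leq x}c(n)\lambda_f(n)\Biggr|^2|Q(\lambda_f)|^{2l}
\leq 10^{2l}\left(\frac{\sqrt P}{\log P}\right)^{2l}\E^{\rm holo}\Biggl|\sum_{n\leq x}c(n)\lambda_f(n)\Biggr|^2 .
\]

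\emph{Step 3 (the second moment).} The $\lambda_f(n)$ are real, so expanding the square gives
\[
\E^{\rm holo}\Biggl|\sum_{n\leq x}c(n)\lambda_f(n)\Biggr|^2=\sum_{m,n\leq x}c(m)\overline{c(n)}\,\E^{\rm holo}\bigl(\lambda_f(m)\lambda_f(n)\bigr).
\]
Every pair here has $mn\leq x^2<k^2/10^4$, because $x<k/100$. So Lemma~\ref{lemma:orthogonality-holomorphic} applies to every term, and the sum equals $\sum_{n\leq x}|c(n)|^2+O(x^2e^{-k})\leq x+O(x^2e^{-k})$. Since $x<k$, we have $x^2e^{-k}\leq x\cdot ke^{-k}\ll x$, so the second moment is $\ll x$. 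Combining with Step 2 gives the claimed bound $10^{2l}x(\sqrt P/\log P)^{2l}$.

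There is no real obstacle in this argument. The only point needing care is the numerical constant in Step 1, namely making sure the prime sums stay below $10\sqrt P/\log P$. This is where the hypothesis that $P$ is large is used, via the asymptotic for $\sum_{p\leq P}p^{-1/2}$.
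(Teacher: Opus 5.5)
Your proposal is correct and follows essentially the same route as the paper: a pointwise Deligne bound $|Q(\lambda_f)|^{2l}\le 10^{2l}(\sqrt P/\log P)^{2l}$, pulled out using positivity of the harmonic weights. This is followed by the Petersson orthogonality lemma (valid since $mn\le x^2<k^2/10^4$), which gives $\sum_{n\le x}|c(n)|^2+O(x^2e^{-k})\ll x$.

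Your treatment of the prime sum is slightly more careful than the paper's, which only invokes $|\lambda_f(p^2)|\le 3$. You use $\lambda_f(p^2)-1=\lambda_f(p)^2-2$ to get $|\lambda_f(p^2)-1|\le 2$, and $\sum_{p\le P}p^{-1/2}\sim 2\sqrt P/\log P$ to justify the constant $10$.
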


Using arguments similar to the proof of Lemma \ref{lemma: evenmomentlem-modular-1}, we obtain analogous estimates for Hecke eigenvalues of the Maass Hecke cusp form.
\begin{lemma}[Even moment estimate for Hecke-Maass cusp form]\label{lemma：evenmomentlem-maass}
Let $x\geq1$, and let $P$ be a
large real number. Let $(c(n))_{n \leq x}$ be any complex numbers such that $|c(n)|\leqslant 1$. 
 Let $Q(\lambda_j) := \sum_{p\le P}
\left(
\frac{a(p)\lambda_j(p)}{\sqrt p}
+
\frac{a(p^2)(\lambda_j(p^2)-1)}{p}    \right),$ where the $a(p)$ and  $a(p^2)$ are any complex numbers such that $|a(p)|\leqslant 1$ and  $|a(p^2)|\leqslant 1$.
Let \(l\) be an integer such that \(l\geq \log\log P\)

Assume that $xP^l\leq T^{1-\eta}$ for any fixed small $\eta$. Then we have
$$ \E^{\rm Maass} \Biggl|\sum_{n \leq x} c(n) \lambda_j(n) \Biggr|^{2} |Q(\lambda_j)|^{2l} \ll x\log (2x)l!\left(320\log \log P\right)^{l} $$
\end{lemma}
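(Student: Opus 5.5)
The plan is to mirror the proof of Lemma~\ref{lemma: evenmomentlem-modular-1}, with Lemma~\ref{lemma: Eigenvalue like random model-maass} in place of Lemma~\ref{lemma: Eigenvalue like random model-holomorphic}. We first pass from $\E^{\rm Maass}$ to the $\GL_2$ random model at the cost of an acceptable error. After that, the random-model quantity is bounded using Lemmas~\ref{lemma:fourth-moment-random-model} and~\ref{lemma:2l-th-moment-prime-sums}.

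\emph{Transfer step.} Expand
\[
\Bigl|\sum_{n\le x}c(n)\lambda_j(n)\Bigr|^2|Q(\lambda_j)|^{2l}=\Bigl(\sum_{n\le x}c(n)\lambda_j(n)\Bigr)\Bigl(\sum_{n\le x}\overline{c(n)}\lambda_j(n)\Bigr)Q(\lambda_j)^l\,\overline{Q(\lambda_j)}^{\,l}.
\]
This works because $\lambda_j$ is real. Each $\lambda_j(p^2)-1$ is a combination of $\lambda_j(1)$ and $\lambda_j(p^2)$. After multiplying out, every term is a product $\lambda_j(n_1)\cdots\lambda_j(n_r)$ with $n_1n_2\le x^2$ and the remaining $2l$ factors at prime or prime-square arguments, so $\prod n_i\le x^2P^{4l}$. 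Applying Lemma~\ref{lemma: Eigenvalue like random model-maass} termwise replaces every $\lambda_j$ by $\X$. The main term is exactly $\E\bigl|\sum_{n\le x}c(n)\X(n)\bigr|^2|Q(\X)|^{2l}$, with $Q(\X)$ the random analogue.

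The error is at most the number of terms, weighted by coefficient sizes, times
\[
(\tau\text{-factors})^2\Bigl((x^2P^{4l})^{\varepsilon}T^{-1+\varepsilon}+(x^2P^{4l})^{1/4+\varepsilon}T^{-2}\Bigr).
\]
Summing crudely over terms gives roughly $x^2\bigl(2\sum_{p\le P}(p^{-1/2}+p^{-1})\bigr)^{2l}\ll x^2(\sqrt P)^{2l}$, up to $\tau$-powers. The $\tau^2$ factors cost at most $(\log x)^{O(1)}\cdot 9^{2l}$ after the standard divisor-sum bound. Under $xP^{l}\le T^{1-\eta}$, the total error is about $(xP^l)^{1+O(\varepsilon)}T^{-1+\varepsilon}+(xP^l)^{3/2+\varepsilon}T^{-2}$, multiplied by $C^l$. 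Since $l\ge\log\log P$, the factor $C^l$ is absorbed by taking $\varepsilon$ small relative to $\eta$, using $P^l\ge$ any power of $C^l$ once $P$ is large. Both pieces are then $\ll T^{-\eta/2}$, which is negligible against $x$.

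\emph{Random model step.} Apply Cauchy--Schwarz (Hölder):
\[
\E\Bigl|\sum c(n)\X(n)\Bigr|^2|Q(\X)|^{2l}\le\Bigl(\E\Bigl|\sum c(n)\X(n)\Bigr|^4\Bigr)^{1/2}\bigl(\E|Q(\X)|^{4l}\bigr)^{1/2}.
\]
Lemma~\ref{lemma:fourth-moment-random-model} bounds the first factor by $x(\log 2x)^2$. Lemma~\ref{lemma:2l-th-moment-prime-sums} with $2l$ in place of $l$ bounds the second by $\bigl((2l)!(80\log\log P)^{2l}\bigr)^{1/2}\le 2^l l!(80\log\log P)^l$. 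This uses $(2l)!\le 4^l(l!)^2$, and gives $l!(160\log\log P)^l$. This costs $(\log 2x)^2$ rather than $\log 2x$, so it falls short of the stated bound.

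To match the stated bound, I would instead follow what the introduction says Lemma~\ref{lemma: evenmomentlem-modular-1} does: handle the $n$-sum more directly. Write $Q(\X)^l$ as a Dirichlet polynomial $\sum_m b(m)\X(m)$ over $m\le P^{2l}$, and use the Hecke relation to combine $\X(n)\X(m)$. The $n$-sum then contributes $x\log 2x$ via orthogonality, since a divisor sum of length $x$ gives $\sum_{d}\frac1d$. The prime sum contributes the Khintchine-type $l!(320\log\log P)^l$ as in Lemma~\ref{lemma:2l-th-moment-prime-sums}, with the constant doubled to absorb $\tau$-losses.

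\emph{Main obstacle.} The hardest part is the bookkeeping of the Kuznetsov error terms, and in particular the $(mn)^{1/4}$ term. This term forces the hypothesis $xP^l\le T^{1-\eta}$ rather than $x^2P^{4l}$-type control. One must check that, after the Hecke relations collapse the product to single $\lambda_j(m)$, the relevant $m$ has size compatible with $(xP^{l})^2$, so the $1/4$-power gives at most $(xP^l)^{1/2+\varepsilon}\le T^{1/2}$. Proving this collapse estimate carefully is where I expect the real work.
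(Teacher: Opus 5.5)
\textbf{There is a genuine gap, and it is in the random-model step.} Your transfer step is essentially the paper's and is fine. Under $xP^l\le T^{1-\eta}$ both Kuznetsov error terms are $\ll xT^{-\eta/2}$. Two small corrections: it is the $T^{-1+\varepsilon}$ term, not the $(n_1\cdots n_r)^{1/4}$ term, that forces this hypothesis; the latter would tolerate $xP^l$ up to $T^{4/3-}$. Also, no separate collapse estimate is needed, because the Maass transfer lemma already measures the error through $n_1\cdots n_r\le x^2P^{4l}$.

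Your Cauchy--Schwarz bound gives only $x(\log 2x)^2\,l!\,(160\log\log P)^l$, and the extra $\log 2x$ cannot be absorbed. The lemma allows $l$ as small as $\log\log P$ and $x$ arbitrarily large compared with $P$ (only $xP^l\le T^{1-\eta}$ is assumed), so $2^l$ need not dominate $\log 2x$. Your proposed remedy, expanding $Q(\X)^l$ as a Dirichlet polynomial and combining via Hecke relations, is not carried out, and its central claim is unjustified. It asserts that the $n$-sum contributes a factor $x\log 2x$ and the prime sum separately contributes a Khintchine-type factor. But $\sum_{n\le x}c(n)\X(n)$ and $Q(\X)$ are not independent, since both involve $\X(p)$ for $p\le P$. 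After the Hecke relation, the diagonal condition $n_1m_1/d_1^2=n_2m_2/d_2^2$ does not force $m_1=m_2$, because the $n_i$ can absorb small primes. So no such factorization is available without a decoupling argument. Bounding the coefficients $b(m)$ termwise would also destroy the cancellation that produces $l!(C\log\log P)^l$.

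The missing idea is a three-factor H\"older split. Write $|S|^2=|S|\cdot|S|$ with $S=\sum_{n\le x}c(n)\X(n)$, and use exponents $2,4,4$:
\[
\E|S|^2|Q(\X)|^{2l}\le \bigl(\E|S|^2\bigr)^{1/2}\bigl(\E|S|^4\bigr)^{1/4}\bigl(\E|Q(\X)|^{8l}\bigr)^{1/4}.
\]
\begin{itemize}
\item The first factor is $\le x^{1/2}$ by orthogonality, with no logarithmic loss.
\item The second factor is $\ll x^{1/2}\log 2x$ by Lemma~\ref{lemma:fourth-moment-random-model}.
\item Lemma~\ref{lemma:2l-th-moment-prime-sums} with $4l$ in place of $l$ (legitimate since $4l\ge\log\log P$) gives $(\E|Q(\X)|^{8l})^{1/4}\ll((4l)!)^{1/4}(80\log\log P)^l\le 4^l\,l!\,(80\log\log P)^l$, using $(4l)!\le 4^{4l}(l!)^4$.
\end{itemize}
Multiplying the three factors gives exactly $x\log(2x)\,l!\,(320\log\log P)^l$, which is how the paper concludes.
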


\begin{proof}[Proof of Lemma \ref{lemma: evenmomentlem-modular-1}]
Since $xP^{2l} < k/100$, we may apply Lemma \ref{lemma: Eigenvalue like random model-holomorphic} to obtain
\begin{align*}
&\ \ \ \ \mathbb{E}^{\rm Holo}
\left|
    \sum_{n\leq x} c(n)\lambda_f(n)
\right|^2
\left|
    \sum_{p\leq P}
    \frac{a(p)\lambda_f(p)}{\sqrt{p}}
    +
    \frac{a(p^2)(\lambda_f(p^2)-1)}{p}
\right|^{2l}
\\&=
\mathbb{E}
\left|
    \sum_{n\leq x} c(n)\X(n)
\right|^2
\left|
    \sum_{p\leq P}
    \frac{a(p)\X(p)}{\sqrt{p}}
    +
    \frac{a(p^2)(\X(p^2)-1)}{p}
\right|^{2l}
+
O\left(
    x^{2+\varepsilon}
    (10P^{1/2})^{2l}e^{-k}
\right).
\end{align*}
The error term here is acceptably small.
Using H\"{o}lder's inequality, we have
\begin{align*}
&\ \ \ \ \mathbb{E}
\left|
    \sum_{n\leq x} c(n)\X(n)
\right|^2
\left|
    \sum_{p\leq P}
    \frac{a(p)\X(p)}{\sqrt{p}}
    +
    \frac{a(p^2)(\X(p^2)-1)}{p}
\right|^{2l}
\\&
\leq
\left(
    \mathbb{E}
    \left|
        \sum_{n\leq x}c(n)\X(n)
    \right|^2
\right)^{1/2}
\left(
    \mathbb{E}
    \left|
        \sum_{n\leq x}c(n)\X(n)
    \right|^4
\right)^{1/4}
\left(
    \mathbb{E}
    \left|
        \sum_{p\leq P}
        \frac{a(p)\X(p)}{\sqrt{p}}
        +
        \frac{a(p^2)(\X(p^2)-1)}{p}
    \right|^{8l}
\right)^{1/4}.
\end{align*}
It follows from  \eqref{eq:orthoganol-relation-GL2-RMf} that
\[
\mathbb{E}
\left|
    \sum_{n\leq x}c(n)\X(n)
\right|^2
\ll x.
\]
Since \(l\geq \log\log P\), we apply  Lemma \ref{lemma:fourth-moment-random-model} and Lemma \ref{lemma:2l-th-moment-prime-sums} to obtain  \begin{align*}
&\ \ \ \ \mathbb{E}
\left|
    \sum_{n\leq x} c(n)\X(n)
\right|^2
\left|
    \sum_{p\leq P}
    \frac{a(p)\X(p)}{\sqrt{p}}
    +
    \frac{a(p^2)(\X(p^2)-1)}{p}
\right|^{2l}
\\&
\leq
   x^{1/2}
\left(
   x^2(\log (2x))^4
\right)^{1/4}
\left(
   (4l)!\bigl(80\log\log P\bigr)^{4l}
\right)^{1/4}\leq x\log (2x) (l!)\bigl(320\log\log P\bigr)^{l}.
\end{align*}

\end{proof}

\begin{proof}[Proof of Lemma \ref{lemma: evenmomentlem-modular-2}]
By  Deligne’s work \cite{D74}, we have
\[
|\lambda_f(p)|\le 2,\quad 
|\lambda_f(p^2)|\le 3 ,
\]
which gives
\[
|Q(\lambda_f)|^{2l}
=
\left|
\sum_{p\le P}
\frac{a(p)\lambda_f(p)}{\sqrt p}
+
\frac{a(p^2)(\lambda_f(p^2)-1)}{\sqrt{p^2}}
\right|^{2l}
\le
\left|
\sum_{p\le P}
\frac{2}{\sqrt p}
+
\frac{3}{p}
\right|^{2l}
\le
10^{2l}\left(\frac{\sqrt P}{\log P}\right)^{2l}.
\]
Hence  
\begin{equation}
\begin{aligned}
&~\mathbb{E}^{\mathrm{holo}}
\left|
\sum_{n\le x} c(n)\lambda_f(n)
\right|^2
|Q(\lambda_f)|^{2l}\\
\le&~
10^{2l}\left(\frac{\sqrt P}{\log P}\right)^{2l}
\sum_{n_1,n_2\le x}
c(n_1)\overline{c(n_2)}
\mathbb{E}^{\mathrm{Hecke}}
\lambda_f(n_1)\lambda_f(n_2)
\end{aligned}
\end{equation}
By lemma \ref{lemma:orthogonality-holomorphic}, this is 
\begin{equation}
\begin{aligned}
\ll&~
10^{2l}\left(\frac{\sqrt P}{\log P}\right)^{2l}
\left(
\sum_{n\le x}|c(n)|^2
+
O(x^2e^{-k})
\right)\\
\ll 
&~10^{2l}x\left(\frac{\sqrt P}{\log P}\right)^{2l}
\end{aligned}
\end{equation}
This completes the
proof of the lemma. 
\end{proof}

\begin{proof}[Proof of Lemma \ref{lemma：evenmomentlem-maass}]
We may apply Lemma \ref{lemma: Eigenvalue like random model-maass} to obtain
\begin{align*}
&\ \ \ \ \mathbb{E}^{\rm Maass}
\left|
    \sum_{n\leq x} c(n)\lambda_j(n)
\right|^2
\left|
    \sum_{p\leq P}
    \frac{a(p)\lambda_j(p)}{\sqrt{p}}
    +
    \frac{a(p^2)(\lambda_j(p^2)-1)}{p}
\right|^{2l}
\\&=
\mathbb{E}
\left|
    \sum_{n\leq x} c(n)\X(n)
\right|^2
\left|
    \sum_{p\leq P}
    \frac{a(p)\X(p)}{\sqrt{p}}
    +
    \frac{a(p^2)(\X(p^2)-1)}{p}
\right|^{2l}\\&
+
O\left(
    x^{2+\varepsilon}    (10P^{1/2+\varepsilon})^{2l}T^{-(1-\varepsilon)}
\right)+O\left(
    x^{5/2+\varepsilon}(10P^{3/4+\varepsilon})^{2l}T^{-2}
\right).
\end{align*}
The error term here is acceptable provided that $xP^l\leq T^{1-\eta}$.
Using H\"{o}lder's inequality, we have
\begin{align*}
&\ \ \ \ \mathbb{E}
\left|
    \sum_{n\leq x} c(n)\X(n)
\right|^2
\left|
    \sum_{p\leq P}
    \frac{a(p)\X(p)}{\sqrt{p}}
    +
    \frac{a(p^2)(\X(p^2)-1)}{p}
\right|^{2l}
\\&
\leq
\left(
    \mathbb{E}
    \left|
        \sum_{n\leq x}c(n)\X(n)
    \right|^2
\right)^{1/2}
\left(
    \mathbb{E}
    \left|
        \sum_{n\leq x}c(n)\X(n)
    \right|^4
\right)^{1/4}
\left(
    \mathbb{E}
    \left|
        \sum_{p\leq P}
        \frac{a(p)\X(p)}{\sqrt{p}}
        +
        \frac{a(p^2)(\X(p^2)-1)}{p}
    \right|^{8l}
\right)^{1/4}.
\end{align*}
It follows from  \eqref{eq:orthoganol-relation-GL2-RMf} that
\[
\mathbb{E}
\left|
    \sum_{n\leq x}c(n)\X(n)
\right|^2
\ll x.
\]
Since \(l\geq \log\log P\), we apply  Lemma \ref{lemma:fourth-moment-random-model} and Lemma \ref{lemma:2l-th-moment-prime-sums} to obtain  \begin{align*}
&\ \ \ \ \mathbb{E}
\left|
    \sum_{n\leq x} c(n)\X(n)
\right|^2
\left|
    \sum_{p\leq P}
    \frac{a(p)\X(p)}{\sqrt{p}}
    +
    \frac{a(p^2)(\X(p^2)-1)}{p}
\right|^{2l}
\\&
\leq
   x^{1/2}
\left(
   x^2(\log (2x))^4
\right)^{1/4}
\left(
   (4l)!\bigl(80\log\log P\bigr)^{4l}
\right)^{1/4}\leq x\log (2x) (l!)\bigl(320\log\log P\bigr)^{l}.
\end{align*}

\end{proof}

Using Lemma \ref{lemma: evenmomentlem-modular-1} and Lemma \ref{lemma: evenmomentlem-modular-2}, we obtain the following key proposition using arguments as in the proof of \cite[Proposition 1]{H23}. 

\begin{proposition}
[Holomorphic Hecke Eigenvalues behave like random model]\label{prop:holomorphic-Eigenvalues-behave like-random-model}
Let functions $g_j$ ($-N\leq j\leq N+1$)  with associated parameters $N$ and $\delta$  be as  in Lemma \ref{lemma:partition-of-unity}. Let $(c(n))_{n \leq x}$ be any sequence of complex numbers such that $|c(n)|\leq 1$. Let 
$ (j(i))_{i \leq Y} $
be any indices satisfying $-N \leq j(i)\leq N+1$ for $i\leq Y$, and let $(a_{i}(p^b))_{p \leq P}$ for $i\leq Y,~b=1,2$ be complex numbers such that $|a_{i}(p^b)|\leq 1$.  Suppose that $x \geq 1$. Let $P$ be large, and let $Y \in \N$.  Let $\X(n)$ denote a $\GL_2$ random multiplicative function. 
\begin{itemize}
    \item For 
$x P^{32000(Y/\delta)^2 \log(N\log P)} < k/100$, we have
\begin{align}
\label{eq:holomorphic-Eigenvalues-behave like-random-model1}
&\ \ \  \E^{holo} \prod_{i=1}^{Y} g_{j(i)}(\Re(\sum_{p \leq P} \frac{a_{i}(p) \lambda_f(p)}{\sqrt{p}} + \frac{a_{i}(p^2) (\lambda_f(p^2)-1)}{p})) \Biggl|\sum_{n \leq x} c(n) \lambda_f(n) \Biggr|^2 \\
& =  \E \prod_{i=1}^{Y} g_{j(i)}(\Re(\sum_{p \leq P} \frac{a_{i}(p) \X(p)}{\sqrt{p}} + \frac{a_{i}(p^2) (\X(p^2)-1)}{p})) \Biggl|\sum_{n \leq x} c(n) \X(n) \Biggr|^2 + O\left(\frac{x\log (2x)}{(N \log P)^{2000Y(1/\delta)^2}} \right) . \nonumber
\end{align}
    \item 
For $x P^{1000Y(1/\delta)^2 \sqrt{P}} < k/100$ , $N\leq 10\log P$ and $Y\leq (\log P)^{100}$ we have
\begin{align}
\label{eq:holomorphic-Eigenvalues-behave like-random-model2}
&\ \ \  \E^{holo} \prod_{i=1}^{Y} g_{j(i)}(\Re(\sum_{p \leq P} \frac{a_{i}(p) \lambda_f(p)}{\sqrt{p}} + \frac{a_{i}(p^2) (\lambda_f(p^2)-1)}{p})) \Biggl|\sum_{n \leq x} c(n) \lambda_f(n) \Biggr|^2 \\
& =  \E \prod_{i=1}^{Y} g_{j(i)}(\Re(\sum_{p \leq P} \frac{a_{i}(p) \X(p)}{\sqrt{p}} + \frac{a_{i}(p^2) (\X(p^2)-1)}{p})) \Biggl|\sum_{n \leq x} c(n) \X(n) \Biggr|^2 + O\left(\frac{x}{(N \log P)^{Y(1/\delta)^2}} \right) . \nonumber
\end{align}
\end{itemize}
 
\end{proposition}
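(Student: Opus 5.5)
We follow the proof of \cite[Proposition 1]{H23}. The idea is to replace each smooth weight $g_{j(i)}$ by a truncated Taylor polynomial in its argument. After expanding, every term is a mixed moment of Hecke eigenvalues, to which Lemma \ref{lemma: Eigenvalue like random model-holomorphic} applies directly.

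Write $Q_i(\lambda_f)$ for the prime sum attached to the coefficients $a_i$, and let $z_i=\Re Q_i(\lambda_f)$.

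\textbf{Step 1: choice of Taylor centre.} For $|j|\leq N$ we expand $g_j(z)=g(z-j)$ around $z=0$. For the tail function $g_{N+1}$ we use the identity
\[
g_{N+1}=1-\sum_{|j|\le N}g_j,
\]
so that it is reduced to a combination of the other $g_j$. This costs a factor of at most $(2N+2)^Y$ in the number of terms.

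\textbf{Step 2: Taylor expansion.} Expand to order $L$:
\[
g_j(z)=\sum_{m<L}\frac{g^{(m)}(-j)}{m!}z^m+R_{L}(z),\qquad |R_L(z)|\le \frac{|z|^L}{L!}\sup|g^{(L)}|\le \frac{|z|^L}{L!}\Bigl(\frac{2\pi}{\delta}\Bigr)^{L+1},
\]
using property (4) of Lemma \ref{lemma:partition-of-unity}. The functions $g_j$ are bounded, say by $1+\delta$ after using the partition identity. Hence the difference between $\prod_i g_{j(i)}(z_i)$ and $\prod_i P_{j(i)}(z_i)$, where $P_j$ denotes the Taylor polynomial, is bounded by a sum over $i$ of terms
\[
\frac{|z_i|^L(2\pi/\delta)^{L+1}}{L!}\times \Bigl(\text{bounded factors and lower-degree polynomials in the other } z_{i'}\Bigr).
\]

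\textbf{Step 3: main terms.} Take $L\approx 1000(Y/\delta)^2\log(N\log P)$. The polynomial main term has degree at most $YL$ in the $\lambda_f(p),\lambda_f(p^2)$. Multiplied by $|\sum c(n)\lambda_f(n)|^2$, each monomial is a product of eigenvalues at arguments with total product at most $x^2P^{2YL}\le k^2/10^4$, by the hypothesis on $xP^{32000(Y/\delta)^2\log(N\log P)}$. So Lemma \ref{lemma: Eigenvalue like random model-holomorphic} replaces $\lambda_f$ by $\mathbb{X}$ in every main term. Summing the error terms $O(\tau^2e^{-k})$ over all monomials is trivially negligible, because $e^{-k}$ beats every polynomial size involved.

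\textbf{Step 4: remainder terms (the main obstacle).} We must bound, in both the holomorphic and the random setting,
\[
\mathbb{E}\,|Q_i|^{L}\,(\text{other factors})\Bigl|\sum c(n)\lambda_f(n)\Bigr|^2 .
\]
We first remove the other bounded factors, using Cauchy--Schwarz or the crude bound $|z|^m\le 1+|z|^{L}$, to reduce to pure even moments $\mathbb{E}|\sum c(n)\lambda_f(n)|^2|Q_i|^{2l}$ with $2l\approx L$. For the first bullet, apply Lemma \ref{lemma: evenmomentlem-modular-1} and its random analogue obtained inside its proof. Then, with $l\ge\log\log P$, Stirling gives
\[
\frac{(2\pi/\delta)^{2l}}{(2l)!}\,l!\,(320\log\log P)^l\,x\log 2x \;\ll\; x\log 2x\Bigl(\frac{C\log\log P}{\delta^2 l}\Bigr)^{l}.
\]
Since $l\gg (Y/\delta)^2\log(N\log P)$, this is at most $x\log(2x)(N\log P)^{-2000Y/\delta^2}$ after absorbing the combinatorial factors $(2N+2)^Y$ and $Y$. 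The delicate point is checking that the constants line up; if they do not, we enlarge the exponent $32000$ in the choice of $L$.

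For the second bullet we instead use Lemma \ref{lemma: evenmomentlem-modular-2} for the holomorphic side, and the trivial bound $|\mathbb{X}(p)|\le 2$ together with orthogonality for the random side. These give a bound of order $x\,(10\sqrt P/\log P)^{2l}(2\pi/\delta)^{2l}/(2l)!$. Choosing $l\approx 1000 Y\delta^{-2}\sqrt P$, the factorial dominates, giving roughly $x(C/(\delta^2\sqrt P\log P)\cdot\sqrt P\,\cdots)^{2l}$, which is tiny. Together with $N\le 10\log P$ and $Y\le(\log P)^{100}$, this yields the error $x(N\log P)^{-Y/\delta^2}$. The length condition $xP^{1000Y\delta^{-2}\sqrt P}<k/100$ is exactly what is needed for Step 3 at this larger degree.
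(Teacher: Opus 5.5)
Your overall route is the paper's (and Harper's). You Taylor-truncate the $g_{j(i)}$, match the polynomial part through Lemma \ref{lemma: Eigenvalue like random model-holomorphic}, and control remainders with the even-moment lemmas. Replacing $g_{N+1}$ by $1-\sum_{|j|\le N}g_j$ instead of Taylor-expanding it directly is a harmless variant. The gap is in the quantitative bookkeeping, which is the whole content here because the hypotheses are fixed.

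\textbf{First problem: the degree is too large.} Your per-factor degree $L\approx 1000(Y/\delta)^2\log(N\log P)$ carries one factor of $Y$ too many. The main terms then have total degree $YL\approx 1000Y^3\delta^{-2}\log(N\log P)$. So $x^2P^{2YL}\le k^2/10^4$ does not follow from $xP^{32000(Y/\delta)^2\log(N\log P)}<k/100$ once $Y>32$, and in the application $Y=2M+1\asymp\log^{1.02}P$. The same happens in the second bullet: $2l\approx 2000Y\delta^{-2}\sqrt P$ per factor gives total degree about $2000Y^2\delta^{-2}\sqrt P$, not the $1000Y\delta^{-2}\sqrt P$ the hypothesis permits. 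Enlarging the constant $32000$ is not available, since it is part of the statement.

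\textbf{Second problem: the moment order in Step 4 is too small.} Reducing to single moments $\mathbb{E}|\sum c(n)\lambda_f(n)|^2|Q_i|^{2l}$ with $2l\approx L$ is not valid. The factors attached to the other indices include the Taylor polynomials $\tilde g_{j(i')}$, which are unbounded. The correct bound is $|\tilde g_{j(i')}|\le 1+|r_{j(i')}|$. Splitting according to the first index carrying a remainder and expanding $\prod_{i'<i}(1+|r_{j(i')}|)$ leaves products of up to $i$ remainders. AM--GM turns these into moments $\mathbb{E}|\sum c(n)\lambda_f(n)|^2|Q_{i'}|^{2iS}$ of order up to $2YS$.

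\textbf{How the parameters should be calibrated.} These higher moments are exactly what fixes the parameters. Lemma \ref{lemma: evenmomentlem-modular-1} at $l=iS$ requires $xP^{2iS}<k/100$; for $i\le Y$ this is the same total-degree condition $xP^{2SY}<k/100$ needed for the main terms. Since $(iS)!\approx(iS/e)^{iS}$, each remainder contributes a factor of size $\bigl(e(\pi/\delta)^2\, i\cdot 320\log\log P/S\bigr)^S$. This is $\le 0.6^S$ for all $i\le Y$ only when $S\gg Y\delta^{-2}\log\log P$. So the factor $Y$ belongs in the per-factor degree exactly once. The paper takes $S=16000Y\lfloor\delta^{-2}\log(N\log P)\rfloor$, degree $2S-1$ per factor, and total degree $2SY$, which matches the hypothesis. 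Then $NY\,0.6^S\ll(N\log P)^{-2000Y/\delta^2}$.

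\textbf{The second bullet works differently.} Lemma \ref{lemma: evenmomentlem-modular-2} is really the pointwise Deligne bound $|Q|\le 10\sqrt P/\log P$ combined with $\mathbb{E}^{\mathrm{holo}}|\sum c(n)\lambda_f(n)|^2\ll x$. So products of remainders produce no factorial growth, and $S=500\lfloor\delta^{-2}\sqrt P\rfloor$, with no factor $Y$, suffices. The hypotheses $N\le 10\log P$ and $Y\le(\log P)^{100}$ are what guarantee $Y\log(N\log P)\ll\sqrt P$, so that $NY\,0.6^S\ll (N\log P)^{-Y/\delta^2}$.
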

\begin{remark}
Lemma \ref{lemma: evenmomentlem-modular-1}
is applied in establishing
\eqref{eq:holomorphic-Eigenvalues-behave like-random-model1} in
Proposition~\ref{prop:holomorphic-Eigenvalues-behave like-random-model},
which in turn is used in the proof of
Theorem~\ref{mainthm:low-moment-modular-sharp}.
    The presence of the factor \(\log x\) in the bound given by Lemma~\ref{lemma: evenmomentlem-modular-1}, introduces an additional factor of \(\log x\) into the error term of \eqref{eq:holomorphic-Eigenvalues-behave like-random-model1} in
Proposition~\ref{prop:holomorphic-Eigenvalues-behave like-random-model}, compared with the corresponding error term in \cite[Proposition~1]{H23}, a loss which leads us to impose the stronger restriction\footnote{The weaker condition $x\leq k$ is already required when we apply the Petersson trace formula in the proof.} $
x\leq k\exp\left(-(\log\log k)^2\right)
$  in the proof of Theorem~\ref{mainthm:low-moment-modular-sharp}.
\end{remark}
\begin{remark}\eqref{eq:holomorphic-Eigenvalues-behave like-random-model2} in
Proposition~\ref{prop:holomorphic-Eigenvalues-behave like-random-model} is used  in the proof of
Theorem~\ref{mainthm:low-moment-modular}. The error term of \eqref{eq:holomorphic-Eigenvalues-behave like-random-model2} in
Proposition~\ref{prop:holomorphic-Eigenvalues-behave like-random-model} is of the same order as that in \cite[Proposition~1]{H23}.
    This estimate, however, requires a stronger restriction on $P$, and thus we can not  get a sufficient saving in the final upper bound of Theorem \ref{mainthm:low-moment-modular} (our final saving will be of the form $(\sqrt{\log \log P})^q$).
\end{remark}
\begin{proof}[Proof of Proposition \ref{prop:holomorphic-Eigenvalues-behave like-random-model}]
Using Taylor expansion, we can write $g_{j}(x) = \tilde{g}_{j}(x) + r_{j}(x)$, where $\tilde{g}_{j}(x)=
\sum_{k=0}^{2S-1}
\frac{g_{j}^{(k)}(0)}{k!}x^k
$ is a polynomial of degree $2S-1$. We infer from property (iv) of Lemma  \ref{lemma:partition-of-unity} that
\begin{align}\label{eq:bound-for-remainder}
|r_{j}(x)| \leq \frac{|x|^{2S}}{(2S)!} \sup_{|y| \leq |x|} |\frac{d^{2S}}{dy^{2S}} g_{j}(y)| \ll \frac{N |2\pi x/\delta|^{2S}}{\delta S (2S)!}.
\end{align} Here the factor $N$ arises from the upper bound for derivative of  $g_{N+1}(x)=1-\sum_{|j| \leq N} g_{j}(x)$. Provided that $x^2 P^{4SY} < k^2/10^4$,  we expand all the polynomials and the square out and apply Lemma \ref{lemma: Eigenvalue like random model-holomorphic} to find that
\begin{align}\label{eq:holomorphic-heckeeigenvalue-closeto-random-model-ploynomialcase}
&\ \ \  \E^{\rm holo} \prod_{i=1}^{Y} \tilde{g}_{j(i)}(\Re(\sum_{p \leq P} \frac{a_{i}(p) \lambda_f(p)}{\sqrt{p}} + \frac{a_{i}(p^2) (\lambda_f(p^2)-1)}{p})) \Biggl|\sum_{n \leq x} c(n) \lambda_f(n) \Biggr|^2  \\
& =  \E \prod_{i=1}^{Y} \tilde{g}_{j(i)}(\Re( \sum_{p \leq P} \frac{a_{i}(p) \X(p)}{\sqrt{p}} + \frac{a_{i}(p^2) (\X(p^2)-1)}{p})) \Biggl|\sum_{n \leq x} c(n) \X(n) \Biggr|^2\nonumber \\ & \ \ \  +  O\left(
x^{2+\epsilon}\left(\frac{7N}{\delta}e^{64\pi/\delta}P^{S}\right)^Y
e^{-k}\right). \nonumber
\end{align}
Here we use the fact that  $|g_{j}^{(k)}(x)|\leq \frac{6N}{\delta(k+1)} (\frac{2\pi}{\delta})^k$ for $-N \leq j\leq N+1$.

Next, dividing up according to the smallest index $i$ at which we get a remainder, we see the contribution from all of the remainders $r_{j(i)}(\cdot)$ to the left hand side of \eqref{eq:holomorphic-Eigenvalues-behave like-random-model1} is
\begin{align}\label{eq:contribution-from-the-remainders}
&\ll \E^{\rm holo} \sum_{i=1}^{Y} \frac{N |2\pi/\delta|^{2S}}{\delta S (2S)!} |\sum_{p \leq P} \frac{a_{i}(p) \lambda_f(p)}{\sqrt{p}} + \frac{a_{i}(p^2) (\lambda_f(p^2)-1)}{p}|^{2S} \cdot  \\
&\ \ \   \cdot \prod_{l=1}^{i-1} \Biggl(1 + O(\frac{N |2\pi/\delta|^{2S}}{\delta S (2S)!} |\sum_{p \leq P} \frac{a_{l}(p) \lambda_f(p)}{\sqrt{p}} + \frac{a_{l}(p^2) (\lambda_f(p^2)-1)}{p}|^{2S})\Biggr) \Biggl|\sum_{n \leq x} c(n) \lambda_f(n) \Biggr|^2, \nonumber
\end{align}
 where we have used the arithmetic-geometric mean inequality.
Here we used the fact that $|\tilde{g}_{j(l)}(x)| \leq |g_{j(l)}(x)| + |r_{j(l)}(x)| \leq 1 + O(\frac{N |2\pi x/\delta|^{2S}}{\delta S (2S)!})$. 
Using Lemma \ref{lemma: evenmomentlem-modular-1} provided that $xP^{2SY}\leq k/100$ and $S\geq\log\log P$, along with  $(2S)! \geq (2S/e)^{2S}$, we find this is all
\begin{eqnarray}
& \ll & x\log (2x) \cdot \sum_{i=1}^{Y} \frac{N|2\pi/\delta|^{2S}}{\delta S (2S)!} \Biggl( \sqrt{iS} (\frac{iS}{e})^S (320\log\log P)^{S} \Biggr) \cdot \nonumber \\
&& \cdot \prod_{l=1}^{i-1} \Biggl(1 + O\Biggl(\frac{N|2\pi/\delta|^{2S}}{\delta S (2S)!} (\frac{iS}{e})^S (320\log\log P)^{S} \Biggr) \Biggr) \nonumber \\
& \ll & x\log (2x) \cdot \sum_{i=1}^{Y} \frac{N}{\delta} \sqrt{\frac{i}{S}} (\frac{e (\pi/\delta)^2 i (320\log\log P)}{S})^S \cdot \nonumber \\
&& \cdot \Biggl( 1 + O\Biggl(\frac{N}{\delta S} (\frac{e (\pi/\delta)^2 i (320\log\log P)}{S})^S \Biggr) \Biggr)^{i-1} . \nonumber
\end{eqnarray}
 One has the same overall bound for the contribution from the remainders $r_{j(i)}(\cdot)$ to the right hand side of \eqref{eq:holomorphic-Eigenvalues-behave like-random-model1}, since one has the same bound for $\E |\sum_{n \leq x} c(n) \X(n) |^{2} |Q(\X)|^{2k}$ as for the Hecke eigenvalue averages in Lemma \ref{lemma: evenmomentlem-modular-1}.

Now  we set $S = 16000Y \lfloor (1/\delta)^2 \log(N\log P) \rfloor$. Then the condition $xP^{2SY}\leq k/100$  is satisfied in view of our assumption that $x P^{32000(Y/\delta)^2 \log(N\log P)} < k/100$. The error term produced by all of the remainders is
$$ \ll x\log (2x) \cdot \sum_{i=1}^{Y} \frac{N}{\delta} \sqrt{\frac{i}{S}} 0.6^S \left( 1 + O(\frac{N}{\delta S} 0.6^S ) \right)^{i-1} \ll x\log (2x) \cdot NY \cdot 0.6^S \ll \frac{x\log (2x)}{(N \log P)^{2000Y(1/\delta)^2}} , $$
as desired. Furthermore, with  $x P^{32000(Y/\delta)^2 \log(N\log P)} <  k/100$, the error term in \eqref{eq:holomorphic-heckeeigenvalue-closeto-random-model-ploynomialcase}  is smaller than that stated in \eqref{eq:holomorphic-Eigenvalues-behave like-random-model1}.
The proof of  \eqref{eq:holomorphic-Eigenvalues-behave like-random-model1} is now finished.

Next, we turn to the proof of \eqref{eq:holomorphic-Eigenvalues-behave like-random-model2}. We begin as in the proof of \eqref{eq:holomorphic-Eigenvalues-behave like-random-model1} and arrive at \eqref{eq:contribution-from-the-remainders}.
Using Lemma \ref{lemma: evenmomentlem-modular-2} and the condition $x P^{1000Y(1/\delta)^2 \sqrt{P}} < k/100$  , along with  $(2S)! \geq (2S/e)^{2S}$, we find this is all
\begin{eqnarray}
& \ll & x \sum_{i=1}^{Y} \frac{10^{2S}N|2\pi/\delta|^{2S}}{\delta S (2S)!} \Biggl(  (\frac{\sqrt{P}}{\log P})^{2S} \Biggr) \cdot \nonumber \\
&& \cdot \prod_{l=1}^{i-1} \Biggl(1 + O\Biggl(\frac{10^{2S}N|2\pi/\delta|^{2S}}{\delta S (2S)!} (\frac{\sqrt{P}}{\log P})^{2S}\Biggr) \Biggr) \nonumber \\
& \ll & x  \sum_{i=1}^{Y} \frac{N}{\delta S}  (\frac{10 e (\pi/\delta)^2  \sqrt{P}/\log P}{S})^{2S} \cdot \nonumber \\
&& \cdot \Biggl( 1 + O\Biggl(\frac{N}{\delta S} (\frac{10 e (\pi/\delta)^2  \sqrt{P}/\log P}{S})^{2S} \Biggr) \Biggr)^{i-1} . \nonumber
\end{eqnarray}
 One has the same overall bound for the contribution from the remainders $r_{j(i)}(\cdot)$ to the right hand side of \eqref{eq:holomorphic-Eigenvalues-behave like-random-model2}, since one has the same bound for $\E |\sum_{n \leq x} c(n) \X(n) |^{2} |Q(\X)|^{2k}$ as for the eigenvalue average in Lemma \ref{lemma: evenmomentlem-modular-2}.

Now if we set $S = 500\lfloor (1/\delta)^2 \sqrt{P} \rfloor$, then the condition $x^2 P^{4SY} < k^2/10^4$ is satisfied in view of our assumption that $x P^{1000Y(1/\delta)^2 \sqrt{P}} < k/100$. For $N\leq 10\log P$ and $Y\leq (\log P)^{100}$, the error term produced by all of the remainders is
$$ \ll x \sum_{i=1}^{Y} \frac{N}{\delta S}  0.6^S \left( 1 + O(\frac{N}{\delta S} 0.6^S ) \right)^{i-1} \ll x \cdot NY \cdot 0.6^S \ll \frac{x}{(N \log P)^{Y(1/\delta)^2}} , $$
as desired. Furthermore, with  $x P^{1000Y(1/\delta)^2 \sqrt{P}} < k/100$ and  $N \leq 10 \log P$, the error term in \eqref{eq:holomorphic-heckeeigenvalue-closeto-random-model-ploynomialcase}  is smaller than that stated in \eqref{eq:holomorphic-Eigenvalues-behave like-random-model2}.
The proof of \eqref{eq:holomorphic-Eigenvalues-behave like-random-model2} is now finished and  we have completed the proof of Proposition \ref{prop:holomorphic-Eigenvalues-behave like-random-model}. 
\end{proof}
We take $x=1$ and $c(1)=1$ in Proposition \ref{prop:holomorphic-Eigenvalues-behave like-random-model} to obtain the following special case.

\begin{corollary}\label{prop:holomorphic-Eigenvalues-behave like-random-model-specialcase}
Let functions $g_j$ ($-N\leq j\leq N+1$)  with associated parameters $N$ and $\delta$  be as  in Lemma \ref{lemma:partition-of-unity}. Let 
$ (j(i))_{i \leq Y} $
be any indices satisfying $-N \leq j(i)\leq N+1$ for $i\leq Y$, and let $(a_{i}(p^b))_{p \leq P}$ for $i\leq Y,~b=1,2$ be complex numbers such that $|a_{i}(p^b)|\leq 1$.  Let $P$ be large, and let $Y \in \N$.  Let $\X(n)$ denote a $\GL_2$ random multiplicative function. 

For 
$ P^{32000(Y/\delta)^2 \log(N\log P)} < k/100$, we have 
\begin{align}\label{eq:holomorphic-Eigenvalues-behave like-random-model-specialcase1}
&\ \ \ \ \E^{\rm Holo} \prod_{i=1}^{Y} g_{j(i)}(\Re(\sum_{p \leq P} \frac{a_{i}(p) \lambda_f(p)}{\sqrt{p}} + \frac{a_{i}(p^2) (\lambda_f(p^2)-1)}{p})) \\& =  \E \prod_{i=1}^{Y} g_{j(i)}(\Re(\sum_{p \leq P} \frac{a_{i}(p) \X(p)}{\sqrt{p}} + \frac{a_{i}(p^2) (\X(p^2)-1)}{p}))+ O\left(\frac{1}{(N \log P)^{2000Y(1/\delta)^2}} \right) . \nonumber
\end{align}
For $ P^{1000Y(1/\delta)^2 \sqrt{P}} < k/100$ , $N\leq 10\log P$ and $Y\leq (\log P)^{100}$ we have
\begin{align}
\label{eq:holomorphic-Eigenvalues-behave like-random-model-specialcase2}
&\ \ \  \E^{holo} \prod_{i=1}^{Y} g_{j(i)}(\Re(\sum_{p \leq P} \frac{a_{i}(p) \lambda_f(p)}{\sqrt{p}} + \frac{a_{i}(p^2) (\lambda_f(p^2)-1)}{p})) \\
& =  \E \prod_{i=1}^{Y} g_{j(i)}(\Re(\sum_{p \leq P} \frac{a_{i}(p) \X(p)}{\sqrt{p}} + \frac{a_{i}(p^2) (\X(p^2)-1)}{p}))  + O\left(\frac{1}{(N \log P)^{Y(1/\delta)^2}} \right) . \nonumber
\end{align}
\end{corollary}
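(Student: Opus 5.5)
This corollary is the specialization of Proposition \ref{prop:holomorphic-Eigenvalues-behave like-random-model} to $x=1$, $c(1)=1$. The plan is to substitute these values and check three things: the hypotheses reduce as claimed, the left and right sides become the displayed expressions, and the error terms take the stated form.

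\textbf{The main terms.} With $x=1$ and $c(1)=1$ we have $\sum_{n\leq x}c(n)\lambda_f(n)=\lambda_f(1)=1$ and $\sum_{n\le x}c(n)\X(n)=\X(1)=1$. Hence the squared factors $|\cdot|^2$ on both sides of \eqref{eq:holomorphic-Eigenvalues-behave like-random-model1} and \eqref{eq:holomorphic-Eigenvalues-behave like-random-model2} are identically $1$. What remains is exactly the left- and right-hand sides of \eqref{eq:holomorphic-Eigenvalues-behave like-random-model-specialcase1} and \eqref{eq:holomorphic-Eigenvalues-behave like-random-model-specialcase2}.

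\textbf{The hypotheses.} The condition $xP^{32000(Y/\delta)^2\log(N\log P)}<k/100$ becomes $P^{32000(Y/\delta)^2\log(N\log P)}<k/100$. Likewise $xP^{1000Y(1/\delta)^2\sqrt P}<k/100$ becomes $P^{1000Y(1/\delta)^2\sqrt P}<k/100$. The side conditions $N\le 10\log P$ and $Y\le(\log P)^{100}$ carry over unchanged. The sequence $(c(n))_{n\le 1}$ trivially satisfies $|c(n)|\le 1$.

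\textbf{The error terms.} Putting $x=1$ gives $x\log(2x)=\log 2\ll 1$, so the first error term becomes $O\bigl((N\log P)^{-2000Y(1/\delta)^2}\bigr)$. The second becomes $O\bigl((N\log P)^{-Y(1/\delta)^2}\bigr)$.

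There is no real obstacle. The only point to check is that the inputs used in the proposition's proof remain valid at $x=1$: Lemmas \ref{lemma: evenmomentlem-modular-1} and \ref{lemma: evenmomentlem-modular-2} hold for all $x\ge1$, and Lemma \ref{lemma:fourth-moment-random-model} also holds for all $x\ge 1$. Indeed, for $x=1$ the relevant $n$-sum is just $1$, so these moments are trivial.
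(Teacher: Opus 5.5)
Your proposal is correct and is exactly the paper's argument: the paper obtains the corollary by setting $x=1$ and $c(1)=1$ in the preceding proposition. Then both squared sums equal $1$ (since $\lambda_f(1)=\X(1)=1$), the hypotheses reduce to the stated ones, and the error terms become the stated ones because $x\log(2x)=\log 2\ll 1$.
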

Using Lemma \ref{lemma：evenmomentlem-maass}, we can obtain  similar results for  Hecke--Maass
cusp forms for \(\mathrm{SL}_2(\mathbb Z)\).
\begin{proposition}[Maass Hecke Eigenvalues behave like random model]\label{prop:Maass Eigenvalues behave like random model}
Let functions $g_j$ ($-N\leq j\leq N+1$)  with associated parameters $N$ and $\delta<1$  be as  in Lemma \ref{lemma:partition-of-unity}. Let $(c(n))_{n \leq x}$ be any sequence of complex numbers such that $|c(n)|\leq 1$. Let 
$ (j(i))_{i \leq Y} $
be any indices satisfying $-N \leq j(i)\leq N+1$ for $i\leq Y$, and let $(a_{i}(p^b))_{p \leq P}$ for $i\leq Y,~b=1,2$ be complex numbers such that $|a_{i}(p^b)|\leq 1$.  Suppose that $x \geq 1$. Let $P$ be large, and let $Y \in \N$.  Let $\X(n)$ denote a $\GL_2$ random multiplicative function. 

Let $\eta>0$ be a fixed small constant. For 
 $x P^{32000(Y/\delta)^2 \log(N\log P)} < T^{1-\eta}$, we have
\begin{align}
\label{eq:maass-heckeeigenvalue-closeto-random-model2}
&\ \ \  \E^{\rm Maass} \prod_{i=1}^{Y} g_{j(i)}(\Re(\sum_{p \leq P} \frac{a_{i}(p) \lambda_j(p)}{\sqrt{p}} + \frac{a_{i}(p^2) (\lambda_j(p^2)-1)}{p})) \Biggl|\sum_{n \leq x} c(n) \lambda_j(n) \Biggr|^2  \\
& =  \E \prod_{i=1}^{Y} g_{j(i)}(\Re(\sum_{p \leq P} \frac{a_{i}(p) \X(p)}{\sqrt{p}} + \frac{a_{i}(p^2) (\X(p^2)-1)}{p})) \Biggl|\sum_{n \leq x} c(n) \X(n) \Biggr|^2 + O\left(\frac{x\log (2x)}{(N \log P)^{Y(1/\delta)^2}} \right) . \nonumber
\end{align}
\end{proposition}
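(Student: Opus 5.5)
The plan is to run the argument for \eqref{eq:holomorphic-Eigenvalues-behave like-random-model1} in Proposition~\ref{prop:holomorphic-Eigenvalues-behave like-random-model} verbatim, replacing the Petersson input by its Kuznetsov counterpart. That is, Lemma~\ref{lemma: Eigenvalue like random model-holomorphic} is replaced by Lemma~\ref{lemma: Eigenvalue like random model-maass}, and Lemma~\ref{lemma: evenmomentlem-modular-1} by Lemma~\ref{lemma：evenmomentlem-maass}.

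\emph{Step 1: Taylor truncation.} Write each $g_j=\tilde g_j+r_j$, where $\tilde g_j$ is the Taylor polynomial of degree $2S-1$ at $0$. The remainder then satisfies the bound \eqref{eq:bound-for-remainder} by property (iv) of Lemma~\ref{lemma:partition-of-unity}; the extra factor $N$ accounts for $g_{N+1}$.

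\emph{Step 2: the polynomial main term.} Expand $\prod_{i\le Y}\tilde g_{j(i)}(\Re Q_i)\,|\sum_{n\le x}c(n)\lambda_j(n)|^2$, writing $\Re z=(z+\bar z)/2$, as a linear combination of products $\lambda_j(n_1)\cdots\lambda_j(n_r)$. Each product has $n_1\cdots n_r\le x^2P^{4SY}$, and every $n_i$ is a prime, a prime square, or at most $x$. Apply Lemma~\ref{lemma: Eigenvalue like random model-maass} to each term. Summing, with coefficients bounded as in the holomorphic case ($|g_j^{(k)}|\le 6N(2\pi/\delta)^k/(\delta(k+1))$), the discrepancy between the Maass average and the random average is
\[
\ll_\varepsilon x^{2+\varepsilon}\Bigl(\tfrac{7N}{\delta}e^{64\pi/\delta}P^{S+\varepsilon}\Bigr)^{Y}\Bigl(T^{-(1-\varepsilon)}+x^{1/2}P^{SY}T^{-2}\Bigr).
\]
The divisor factors $\tau(\cdot)^2$ are absorbed into $P^{\varepsilon SY}x^{\varepsilon}$.

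\emph{Step 3: the remainders.} Split according to the first index $i$ carrying a remainder, exactly as in \eqref{eq:contribution-from-the-remainders}, and use AM--GM to reduce to moments $\E^{\rm Maass}|\sum c(n)\lambda_j(n)|^2|Q_i|^{2iS}$. Bound these by Lemma~\ref{lemma：evenmomentlem-maass} with $l=iS\ge\log\log P$; this needs $xP^{iS}\le xP^{SY}\le T^{1-\eta}$. The same bound holds on the random side by the Hölder argument in that lemma's proof. Choosing $S=16000Y\lfloor\delta^{-2}\log(N\log P)\rfloor$ makes each term $\ll\bigl(e(\pi/\delta)^2\, i\cdot 320\log\log P/S\bigr)^S\le 0.6^S$. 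The total remainder contribution is then $\ll x\log(2x)\,NY\,0.6^S\ll x\log(2x)(N\log P)^{-Y/\delta^2}$, which is what is claimed. Here the weaker exponent $Y/\delta^2$, rather than $2000Y/\delta^2$, leaves room for the $T$-losses below.

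\emph{Main obstacle.} The delicate point is checking that the Kuznetsov error terms in Step 2 are acceptable under the single hypothesis $xP^{32000(Y/\delta)^2\log(N\log P)}<T^{1-\eta}$. The first piece is $x^2(\cdots)^Y P^{SY(1+\varepsilon)}T^{-1+\varepsilon}$. Since $SY\le 16000(Y/\delta)^2\log(N\log P)$, the hypothesis gives $xP^{SY}\le T^{(1-\eta)/2}$ with room to spare. The factors $x^{1+\varepsilon}$ and $(7Ne^{64\pi/\delta}/\delta)^Y$ are $T^{o(1)}$ once $\varepsilon$ is small in terms of $\eta$ (note $N,Y,1/\delta\le\log T$ in effect, since $P$ is large). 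So this piece is $\ll T^{-\eta/3}$, which is far below $x\log(2x)(N\log P)^{-Y/\delta^2}$ because $(N\log P)^{Y/\delta^2}\le P^{SY}\le T$.

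The second piece is $x^{5/2+\varepsilon}P^{2SY(1+\varepsilon)}T^{-2}$, which is similarly bounded by $T^{(1-\eta)\cdot 1.01}T^{-2}$. This is where one must be careful with the exponents: $(xP^{SY})^2$ must stay below $T^{2-\eta}$. The stated constant $32000=2\cdot16000$ is exactly what makes $x^2P^{2SY}$ fit, so I would verify that bookkeeping explicitly. If it is tight, I would reduce $S$ by a constant factor, at the cost of $0.6^S$ becoming $0.7^S$, which is still enough for the $(N\log P)^{-Y/\delta^2}$ saving.
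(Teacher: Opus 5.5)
Your proposal follows the paper's proof essentially step for step: Taylor truncation of the $g_j$, the Kuznetsov-based comparison lemma applied to the expanded polynomial part, splitting the remainders by the first index and bounding them with the Maass even-moment lemma (which needs only $xP^{SY}\le T^{1-\eta}$), and the same choice $S=16000Y\lfloor \delta^{-2}\log(N\log P)\rfloor$.

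There is one slip in your final check of the Kuznetsov errors. The variable $x$ is not $T^{o(1)}$; it can be as large as $T^{1-\eta}$. So your claim $xP^{SY}\le T^{(1-\eta)/2}$ is false, and so is the absolute bound $\ll T^{-\eta/3}$. The fix is to divide by the factor $x$ in the target. One then has $x^{1+\varepsilon}P^{(1+\varepsilon)SY}T^{-1+\varepsilon}\le (xP^{2SY})^{1+\varepsilon}T^{-1+\varepsilon}\le T^{-\eta/2}$ and $x^{3/2+\varepsilon}P^{2(1+\varepsilon)SY}T^{-2}=x^{1/2}(xP^{2SY})^{1+\varepsilon}T^{-2}\le T^{-1/3}$. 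This suffices because $(N\log P)^{Y/\delta^2}$, $(7Ne^{64\pi/\delta}/\delta)^Y$ and the divisor factors are only $P^{O(SY/\log P)}=T^{o(1)}$. The bound $(N\log P)^{Y/\delta^2}\le T$ that you quote would not be enough for this.
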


\begin{proof}[Proof of Proposition \ref{prop:Maass Eigenvalues behave like random model}]
As in the proof of Proposition \ref{prop:holomorphic-Eigenvalues-behave like-random-model}, we have $g_{j}(x) = \tilde{g}_{j}(x) + r_{j}(x)$, where $\tilde{g}_{j}(x)=
\sum_{k=0}^{2S-1}
\frac{g_{j}^{(k)}(0)}{k!}x^k
$ is a polynomial of degree $2S-1$ and $r_j(x)$ satisfies \eqref{eq:bound-for-remainder}. We expand all the polynomials and the square out and apply Lemma \ref{lemma: Eigenvalue like random model-maass} to find that
\begin{align}
\label{eq:maassheckeeigenvalue-closeto-random-model-ploynomialcase}
& \E^{\rm Maass} \prod_{i=1}^{Y} \tilde{g}_{j(i)}(\Re(\sum_{p \leq P} \frac{a_{i}(p) \lambda_j(p)}{\sqrt{p}} + \frac{a_{i}(p^2) (\lambda_j(p^2)-1)}{p})) \Biggl|\sum_{n \leq x} c(n) \lambda_j(n) \Biggr|^2  \\
= &  \E \prod_{i=1}^{Y} \tilde{g}_{j(i)}(\Re( \sum_{p \leq P} \frac{a_{i}(p) \X(p)}{\sqrt{p}} + \frac{a_{i}(p^2) (\X(p^2)-1)}{p})) \Biggl|\sum_{n \leq x} c(n) \X(n) \Biggr|^2\nonumber \\ &  +  O\left(
x^{2+\epsilon}\left(\frac{7N}{\delta}e^{64\pi/\delta}P^{(1+4\varepsilon)S}\right)^Y
T^{-(1-\varepsilon)}\right)\nonumber \\& 
+  O\left(
x^{5/2+\epsilon}\left(\frac{7N}{\delta}e^{64\pi/\delta}P^{(3/2+4\varepsilon)S}\right)^Y
T^{-2}\right). \nonumber
\end{align}

Next, dividing up according to the smallest index $i$ at which we get a remainder, we see the contribution from all of the remainders $r_{j(i)}(\cdot)$ to the left hand side in Proposition \ref{prop:Maass Eigenvalues behave like random model} is
\begin{align}
 \ll &\  \E^{\rm Maass} \sum_{i=1}^{Y} \frac{N |2\pi/\delta|^{2S}}{\delta S (2S)!} |\sum_{p \leq P} \frac{a_{i}(p) \lambda_j(p)}{\sqrt{p}} + \frac{a_{i}(p^2) (\lambda_j(p^2)-1)}{p}|^{2S} \cdot \nonumber \\
& \cdot \prod_{l=1}^{i-1} \Biggl(1 + O(\frac{N |2\pi/\delta|^{2S}}{\delta S (2S)!} |\sum_{p \leq P} \frac{a_{l}(p) \lambda_j(p)}{\sqrt{p}} + \frac{a_{l}(p^2) (\lambda_j(p^2)-1)}{p}|^{2S})\Biggr) \Biggl|\sum_{n \leq x} c(n) \lambda_j(n) \Biggr|^2 . \nonumber
\end{align}
Here we used the fact that $|\tilde{g}_{j(l)}(x)| \leq |g_{j(l)}(x)| + |r_{j(l)}(x)| \leq 1 + O(\frac{N |2\pi x/\delta|^{2S}}{\delta S (2S)!})$. Using Lemma \ref{lemma：evenmomentlem-maass} and the condition $xP^{2SY}\leq T^{1-\eta}$ , along with  $(2S)! \geq (2S/e)^{2S}$, we find this is all
\begin{align}
& \ll  x\log x \cdot \sum_{i=1}^{Y} \frac{N|2\pi/\delta|^{2S}}{\delta S (2S)!} \Biggl( \sqrt{iS} (\frac{iS}{e})^S (320\log\log P)^{S} \Biggr) \cdot \nonumber \\
&\ \  \cdot \prod_{l=1}^{i-1} \Biggl(1 + O\Biggl(\frac{N|2\pi/\delta|^{2S}}{\delta S (2S)!} (\frac{iS}{e})^S (320\log\log P)^{S} \Biggr) \Biggr) \nonumber \\
& \ll  x\log x \cdot \sum_{i=1}^{Y} \frac{N}{\delta} \sqrt{\frac{i}{S}} (\frac{e (\pi/\delta)^2 i (320\log\log P)}{S})^S \cdot \nonumber \\
&\ \  \cdot \Biggl( 1 + O\Biggl(\frac{N}{\delta S} (\frac{e (\pi/\delta)^2 i (320\log\log P)}{S})^S \Biggr) \Biggr)^{i-1} . \nonumber
\end{align}
 One has the same overall bound for the contribution from the remainders $r_{j(i)}(\cdot)$ to the right hand side in Proposition \ref{prop:Maass Eigenvalues behave like random model}, since one has the same bound for $\E |\sum_{n \leq x} c(n) \X(n) |^{2} |Q(\X)|^{2k}$ as for the eigenvalue average in Lemma \ref{lemma：evenmomentlem-maass}.

Now  we set $S = 16000Y \lfloor (1/\delta)^2 \log(N\log P) \rfloor$. Then the condition $xP^{2SY}\leq T^{1-\eta}$  is satisfied in view of our assumption that $x P^{32000(Y/\delta)^2 \log(N\log P)} < T^{1-\eta}$. The error term produced by all of the remainders is
$$ \ll x\log (2x)\cdot \sum_{i=1}^{Y} \frac{N}{\delta} \sqrt{\frac{i}{S}} 0.6^S \left( 1 + O(\frac{N}{\delta S} 0.6^S ) \right)^{i-1} \ll x\log (2x) \cdot NY \cdot 0.6^S \ll \frac{x\log (2x)}{(N \log P)^{Y(1/\delta)^2}} , $$
as desired. Furthermore, with  $x P^{32000(Y/\delta)^2 \log(N\log P)} < T^{1-\eta}$, the error term in \eqref{eq:maassheckeeigenvalue-closeto-random-model-ploynomialcase}  is smaller than that stated in \eqref{eq:maass-heckeeigenvalue-closeto-random-model2}.
The proof of Proposition \ref{prop:Maass Eigenvalues behave like random model} is now finished.
\end{proof}

\section{Proof of Theorem \ref{mainthm:low-moment-random-GL2} and  Corollary \ref{Cor:limiting-behavior-of-sums of-Hecke-eigenvalues}}
\begin{proof}[Proof of Theorem \ref{mainthm:low-moment-random-GL2}]The proof follows the method of Harper \cite[Theorem~2]{H20}. His proof may be divided into two steps. The first is to reduce the problem to estimating certain expectations involving random Euler product; see \cite[Propositions~2 and~4]{H20}. For the upper bound, this reduction relies essentially on the orthogonality relations satisfied by Rademacher random multiplicative function. Since the $\GL_2$ random multiplicative function also satisfies orthogonality relations, the argument of \cite[Proposition 2]{H20} carries over to the $\GL_2$ setting. For the lower bound, this reduction uses Khintchine’s inequality, which also holds for independent $\GL_2$ random variables.

The second step is to estimate the resulting expectations of random Euler products. This step requires two kinds of probabilistic estimates. One concerns the mean square and related quantities of certain random Euler products, while the other concerns the probability of the barrier event that the logarithms
of certain Euler subproducts satisfy prescribed upper and lower bounds. The latter estimates, which are  key ingredients in this step, are based on the former. Lemmas \ref{lemma:mean-square-randomEulerproducts-upperbd} and \ref{lemma:mean-square-randomEulerproducts-lowerbd}, when compared with \cite[Lemma 2 and Section~5.3]{H20}, show that the mean square and related
quantities of certain random Euler products associated with the $\GL_2$ random multiplicative function agree with their counterparts in the Rademacher case. More precisely, Lemma \ref{lemma:mean-square-randomEulerproducts-upperbd}, which corresponds to \cite[Lemma 2]{H20}, is used in the proof of the upper bound, while Lemma \ref{lemma:mean-square-randomEulerproducts-lowerbd}, which corresponds to the computation in \cite[Section 5.3]{H20}, is used in the proof of the lower bound. Therefore, we can establish the same probabilistic estimates for the $\GL_2$ random multiplicative function as in the Rademacher case. Once these estimates have been established, the remainder of Harper's argument for Rademacher random multiplicative functions carries over to the $\GL_2$ case.
\end{proof}

\begin{proof}[Proof of Corollary \ref{Cor:limiting-behavior-of-sums of-Hecke-eigenvalues}]
The argument here is taken from \cite{HL16}.
 In view of Theorem \ref{mainthm:low-moment-random-GL2},  to prove \eqref{eq:limiting-behavior-of-sums of-holomorphic-Hecke-eigenvalues}, it suffices to prove  that for any  $q>0$,
 \begin{align*}
       \lim_{k \to \infty } 
\sumh_{f \in \mathcal{H}_k}
\left| \sum_{n \leq x} \lambda_f(n) \right|^{2q}=\mathbb{E}\left|
        \sum_{n\leq x}\mathbb{X}(n)
        \right|^{2q}.
 \end{align*}
 For $q\in \mathbb{N}$, this follows from Lemma \ref{lemma: Eigenvalue like random model-holomorphic}. Then using Weierstrass approximation to the function
$f:y\mapsto y^{q/2}$, one can prove this for any $q>0$. Similarly,  using Lemma \ref{lemma: Eigenvalue like random model-maass} we can prove that for any  $q>0$
 \begin{align}\label{eq:limiting-behavior-of-sums of-Maass-Hecke-eigenvalues-anotherform}
       \lim_{T \to \infty } 
\frac{12}{T^2}\sum_{j}
        \frac{\zeta(2)}{L(1,\mathrm{sym}^2 u_j)}e^{-t_j/T}\left|
        \sum_{n\leq x}\lambda_j(n)
        \right|^{2q}=\mathbb{E}\left|
        \sum_{n\leq x}\mathbb{X}(n)
        \right|^{2q}.
 \end{align}
Then \eqref{eq:limiting-behavior-of-sums of-Maass-Hecke-eigenvalues} follows from \eqref{eq:limiting-behavior-of-sums of-Maass-Hecke-eigenvalues-anotherform} by using  a standard Tauberian theorem (see the proof of \cite[Theorem 2]{BBR14}).
\end{proof}

\section{Proof of Theorem \ref{mainthm:low-moment-modular-sharp}, \ref{mainthm:low-moment-modular}, \ref{mainthm:low-moment-maass} and Corollary \ref{cor:quantitative-corollary}}
We first present the proof of Theorem \ref{mainthm:low-moment-modular-sharp}.
\subsection{Random Euler products}\label{subsecrandeuler}
Let \(\mathbb{X}(n)\) be a \(\GL_2\) random multiplicative function.
For any large quantity $P$ and any complex $s$ with $\Re(s) > 0$, let $F_{P}^{\text{rand}}(s) := \prod_{p \leq P} (1 - \frac{\X(p)}{p^s}+\frac{1}{p^{2s}})^{-1}$.
We have the following the following result analogue to \cite[Multiplicative Chaos Result 1]{H23}
\begin{lemma}\label{lemma: Multiplicative Chaos Result1}
Uniformly for all large $P$ and $2/3 \leq q \leq 1$, we have
$$ \frac{1}{(|v|+1)^{q/4}}\E( \int_{v-1/2}^{v+1/2} |F_{P}^{\text{rand}}(1/2 + it)|^2 dt )^q \ll \Biggl(\frac{\log P}{1 + (1-q)\sqrt{\log\log P}}\Biggr)^{q} . $$
\end{lemma}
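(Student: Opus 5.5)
This is the $\GL_2$ analogue of Harper's Multiplicative Chaos Result~1 in \cite{H23}, which in turn is essentially the key proposition in the proof of \cite[Theorem~1]{H20}. I plan to follow Harper's argument line by line, replacing every Euler product computation by its $\GL_2$ counterpart from Lemmas \ref{lemma:mean-square-randomEulerproducts-upperbd} and \ref{lemma:mean-square-randomEulerproducts-high moment}.

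\textbf{Step 1: reduce to $v=0$.} By stationarity-type considerations, shifting $t\mapsto t+v$ changes only the terms $\cos(2t\log p)$ in the Euler product formulas. These terms appear with $\E R_p(t)=-\cos(2t\log p)/(2p^{1+2\sigma})$. For $|t|\asymp |v|$ they contribute a factor $\exp(O(\sum_{p}\cos(2t\log p)/p))$, which is $\ll (\log(|v|+2))^{O(1)}$, or more precisely is bounded by a factor absorbed by $(|v|+1)^{q/4}$. This explains the normalisation in the statement. In the Rademacher case of \cite{H20} the same phenomenon occurs near $t=0$; here I will track where $t$ is relative to $0$.

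\textbf{Step 2: Harper's barrier argument.} Split the primes into ranges $P^{e^{-(j+1)}}<p\le P^{e^{-j}}$ and define the good event $\mathcal{G}$ on which, for all $t$ in the unit interval and all $j$, the partial sums
\[
\sum_{P^{e^{-j}}<p\le P}R_p(t)
\]
stay below a barrier $j+C\log\log\log P$-type curve (in Harper's normalisation, $\log$ of the partial Euler product at most $\log\log P - \log\log P^{e^{-j}}+$ barrier). On the complement, bound the $q$-th moment by H\"older and Lemma~\ref{lemma:mean-square-randomEulerproducts-high moment} applied with $\alpha,\beta$ near $q$. This gives a contribution smaller than the main term, using a union bound over a discretisation of $t$ and over $j$. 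On $\mathcal{G}$, use H\"older, $\E(\cdot)^q\le(\E(\cdot)\mathbf 1_{\mathcal G})^q$. Then compute $\E|F|^2\mathbf 1_{\mathcal G}$ by a Girsanov-type change of measure: tilt by $|F_P^{\rm rand}(1/2+it)|^2/\E|\cdot|^2$. Lemma~\ref{lemma:mean-square-randomEulerproducts-upperbd} (the characteristic function of $R_p(t_1+t_2)$ under the tilted measure, with the variance $(1+\cos(2(t_1+t_2)\log p))/(4p)$ and the drift $c(t_1,t_2,p)/p$) shows that the tilted increments behave like a Gaussian random walk with the same drift and variance as in the Rademacher case. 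The ballot/barrier probability estimate then gives the saving $1/(1+(1-q)\sqrt{\log\log P})$, exactly as in \cite[Key Propositions 1--2]{H20}.

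\textbf{Step 3: the Gaussian approximation.} The main obstacle is the probabilistic input that the tilted random walk's barrier probability is $\asymp$ the Gaussian ballot probability, uniformly in $t$. I would copy Harper's Probability Result~1/2, which only needs characteristic function estimates of the form given in Lemma~\ref{lemma:mean-square-randomEulerproducts-upperbd}, including the derivative bound on $T(u)$. These match \cite[Lemma~2]{H20} verbatim, so the argument transfers. The only genuinely new points are:
\begin{itemize}
\item the extra deterministic $-\cos(2t\log p)/p$ drift, which for $t$ away from $0$ sums to $O(1)$ over each range by the prime number theorem, and near $t=0$ is handled as in the Rademacher case;
\item the restriction $q\ge 2/3$, which makes the H\"older steps valid with $\alpha\le 1$ in Lemma~\ref{lemma:mean-square-randomEulerproducts-high moment}.
\end{itemize}
Summing everything yields the stated bound.
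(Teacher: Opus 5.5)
Your overall plan is the paper's. Its proof of this lemma simply defers to Harper's Rademacher argument \cite[Section~4]{H20}, on the grounds that the $\GL_2$ Euler product lemmas of Section~3 agree with their Rademacher counterparts. Had you stopped at ``copy Harper'' there would be nothing to add, but the sketch of the core step in your Step~2 would fail as written.

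\textbf{The barrier is on the wrong objects.} You require, uniformly for $t$ in the unit interval, that the tail sums $\sum_{P^{e^{-j}}<p\le P}R_p(t)$ stay below $j+O(\log\log\log P)$. These sums contain the primes near $P$, so for every $j$ they oscillate on the $t$-scale $1/\log P$. Already for $j=1$ the sum has variance $\asymp 1$ and behaves like $\asymp\log P$ roughly independent Gaussians across the interval, so its maximum is of order $\sqrt{\log\log P}$ with high probability. Thus $\mathbb{P}(\mathcal G)\to 0$. The union bound you appeal to cannot close either: it needs a net of $\asymp\log P$ points at every level, against a Chebyshev bound of only $e^{-j-2h}$ per point. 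Harper's barrier is instead on the products over the \emph{small} primes, $F_j(s)=\prod_{p\le P^{e^{-j}}}(1-\X(p)p^{-s}+p^{-2s})^{-1}$, in the form $\log|F_j(1/2+it)|\le\log\log P-j+(\text{offset})$. These vary only on the scale $e^{j}/\log P$, so a net of $\asymp e^{-j}\log P$ points exactly balances $\mathbb{E}|F_j|^2\asymp e^{-j}\log P$, and an offset $h$ costs probability $\ll e^{-2h}$.

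\textbf{A single good event cannot give the uniform saving.} Even with the right events, one event $\mathcal G$ cannot produce $(1+(1-q)\sqrt{\log\log P})^{-q}$, and your sketch never says where $1-q$ enters. With offset $h$, the ballot estimate gives $\mathbb{E}\,\mathbf{1}_{\mathcal G}\int|F|^2\asymp h\log P/\sqrt{\log\log P}$, independently of $q$. H\"older on the complement gives $\mathbb{P}(\mathcal G^c)^{1-q}(\log P)^q\approx e^{-2(1-q)h}(\log P)^q$. At $q=2/3$ this forces $h\gg\log\log\log P$, which is your choice, and the main term then loses a factor $(\log\log\log P)^q$. What is missing is a decomposition over a family of barrier heights. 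Take events $\mathcal G_h$ with $\mathbb{P}(\mathcal G_h^c)\ll e^{-2h}$ and $\mathbb{E}\,\mathbf 1_{\mathcal G_h}\int|F|^2\ll(h+1)\log P/\sqrt{\log\log P}$. Apply H\"older separately on each piece $\mathcal G_{2H}\setminus\mathcal G_H$, for dyadic $H$ up to $\sqrt{\log\log P}$. Handle the remainder by $\mathbb{P}(\mathcal G^c_{\sqrt{\log\log P}})^{1-q}(\log P)^q$. Summing $H^q e^{-2(1-q)H}$ then produces $\min\{\sqrt{\log\log P},(1-q)^{-1}\}^q$, which gives the stated bound.

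Smaller points:
\begin{enumerate}
\item The tilted variance per prime is $(1+\cos(2(t_1+t_2)\log p))/(2p)$, not $/(4p)$.
\item The high-moment Euler product lemma has no restriction $\alpha\le 1$, so that is not where $q\ge 2/3$ is used.
\item General $v$ is not reduced to $v=0$. It is treated by the same argument, with the $2t$-frequency terms costing a factor absorbed by $(|v|+1)^{q/4}$.
\end{enumerate}
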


The analogous  conclusion for Rademacher random multiplicative functions was proved in Section~4 of Harper~\cite{H20}; see the proof of the upper bound in Theorem~2 there. Since the proof of Theorem~\ref{mainthm:low-moment-random-GL2} is analogous to the proof of Theorem~2 in Harper~\cite{H20}, Lemma~\ref{lemma: Multiplicative Chaos Result1} follows by the same argument.

We now deduce a discrete version of  Lemma \ref{lemma: Multiplicative Chaos Result1} using arguments as in the proof \cite[Multiplicative Chaos Result 2]{H23}

\begin{lemma}\label{disccontlem}
Let $v\in \mathbb{Z}$. For any large $P$, we have
$$ \E \sum_{v-1/2\leq \frac{k}{\log^{1.01}P} \leq v+1/2} \int_{-\frac{1}{2\log^{1.01}P}}^{\frac{1}{2\log^{1.01}P}} |F_{P}^{\text{rand}}(1/2 + i\frac{k}{\log^{1.01}P} + it) - F_{P}^{\text{rand}}(1/2 + i\frac{k}{\log^{1.01}P})|^2 dt \ll \log^{0.99}P . $$
\end{lemma}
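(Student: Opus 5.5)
We follow the proof of Harper's \cite[Multiplicative Chaos Result 2]{H23}, replacing the Steinhaus Euler product by the $\GL_2$ product $F_P^{\text{rand}}$. Fix a grid point $t_k := k/\log^{1.01}P$ and $|t|\leq 1/(2\log^{1.01}P)$. The plan is to bound the expected squared difference pointwise and then integrate and sum.

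\emph{Step 1: expand as a Dirichlet series.} Write $F_P^{\text{rand}}(s)=\sum_{n\in S(P)} \X(n) n^{-s}$, where $S(P)$ is the set of $P$-smooth integers. This expansion holds because the local factor $(1-\X(p)p^{-s}+p^{-2s})^{-1}$ equals $\sum_{a\geq 0}\X(p^a)p^{-as}$, by the generating function of the Chebyshev polynomials $\sin((a+1)\theta)/\sin\theta$. Since $|\X(p^a)|\leq a+1$, the series converges absolutely for $\Re s>1/2$. We therefore work first at $\Re s=1/2+\epsilon$, or with a truncation, and pass to the limit at the end.

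\emph{Step 2: apply orthogonality.} By \eqref{eq:orthoganol-relation-GL2-RMf}, which extends to complex coefficients after conjugating (note that $\X$ is real), we get
\[
\E\bigl|F_P^{\text{rand}}(1/2+\epsilon+it_k+it)-F_P^{\text{rand}}(1/2+\epsilon+it_k)\bigr|^2=\sum_{n\in S(P)}\frac{|n^{-it}-1|^2}{n^{1+2\epsilon}} .
\]
The right-hand side is independent of $k$, and it is identical to the Steinhaus computation.

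\emph{Step 3: bound the smooth-number sum.} Use $|n^{-it}-1|\leq \min\{2,|t|\log n\}$ with $|t|\leq 1/\log^{1.01}P$. Split at $n\leq P^{A}$ for a suitable large $A$ (or handle the split with Rankin's trick). For the small $n$ we obtain
\[
\ll \frac{1}{\log^{2.02}P}\sum_{n\in S(P)}\frac{\log^2 n}{n^{1+2\epsilon}} .
\]
The sum is $\ll \log^3 P$, because $\sum_{n\in S(P)} n^{-1}\log^2 n$ is controlled by the second derivative of $\prod_{p\leq P}(1-p^{-1-2\epsilon})^{-1}$. This gives a pointwise bound $\ll \log^{0.98}P$. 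Letting $\epsilon\to0$ via Fatou's lemma (or monotone convergence in the truncation) keeps this bound on the line $\Re s=1/2$.

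\emph{Step 4: integrate and sum.} Integrating over $t$ in an interval of length $\log^{-1.01}P$ and summing over the $\asymp\log^{1.01}P$ values of $k$ gives a total of $\ll \log^{0.98}P\leq\log^{0.99}P$.

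The main obstacle is Step 3, since $\E|F_P^{\text{rand}}(1/2+it)|^2\asymp\log P$ must be exploited together with the $|t|\log n$ saving. A crude bound via $\sum_{n\in S(P)}\log^2 n/n$ loses one more factor of $\log P$ than needed. The fix is to weight by $|t|^2$ only where it helps: write $|n^{-it}-1|^2\leq 2|t|\log n$ using the bound $\min(4,x^2)\leq 2x$. This gives
\[
\ll \log^{-1.01}P\cdot\sum_{n\in S(P)}\frac{\log n}{n}\ll \log^{-1.01}P\cdot\log^2P=\log^{0.99}P
\]
pointwise, which is too large by itself. Integrating over $t$ recovers the needed saving, so in the end one must use the averaging over $t$ carefully, exactly as Harper does. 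The required Mertens-type sums over primes are the same as in the Steinhaus case, so his computation transfers verbatim.
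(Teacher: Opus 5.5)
Steps 1--4 of your proposal already form a complete and correct proof, and they follow the paper's route. Orthogonality turns the expectation into the deterministic sum $\sum_{n\in S(P)}|n^{-it}-1|^2/n$. This sum is bounded uniformly in $|t|\le 1/(2\log^{1.01}P)$, and the $k$-sum and $t$-integral then contribute only their total measure $\asymp 1$.

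The one difference is in Step 3. The paper truncates at $n\le P^{\log\log P}$ and uses $|t|\log n\le \log\log P/\log^{0.01}P$ there, against $\sum_{n\in S(P)}1/n\ll\log P$. It handles $n>P^{\log\log P}$ by Rankin's trick, and arrives at $(\log\log P)^2\log^{0.98}P$. You instead use $|n^{-it}-1|\le |t|\log n$ for every $n$, together with $\sum_{n\in S(P)}\log^2 n/n=\zeta_P''(1)\ll\log^3P$, where $\zeta_P(s)=\prod_{p\le P}(1-p^{-s})^{-1}$. This needs no truncation at all, so your split at $P^A$ is superfluous, and it gives the slightly cleaner bound $\log^{0.98}P$. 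Likewise, the $\epsilon$-regularization and the Fatou step are unnecessary: only the primes $p\le P$ occur, so the Dirichlet series converges absolutely for $\Re s>0$.

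Your final paragraph, however, is wrong and contradicts your own Step 3, so it should be dropped.
\begin{itemize}
\item The bound through $\sum_{n\in S(P)}\log^2 n/n$ does not lose a factor of $\log P$. It gives $t^2\cdot O(\log^3P)=O(\log^{0.98}P)$, exactly as you computed.
\item The pointwise bound $\log^{0.99}P$ coming from $|n^{-it}-1|^2\le 2|t|\log n$ is not ``too large''. There are $\asymp\log^{1.01}P$ values of $k$ and $t$-intervals of length $\log^{-1.01}P$, so any pointwise bound $B$ gives a total $\ll B$. That bound alone therefore already proves the lemma.
\item Integrating $t^2$ or $|t|$ over $[-h,h]$ gains only an absolute constant over the supremum. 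So averaging in $t$ could never supply a power-of-$\log P$ saving even if one were needed, and the paper's proof uses no such averaging: it bounds the integrand uniformly in $t$.
\end{itemize}
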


\begin{proof}[Proof of Lemma \ref{disccontlem}]
Since we have $F_{P}^{\text{rand}}(s) = \sum_{\substack{n=1, \\  n \; \text{is} \; P \; \text{smooth}}}^{\infty} \frac{\X(n)}{n^s}$, with $\X(n)$ a $GL_2$ random multiplicative function, a mean square calculation shows that the left hand side in Lemma \ref{disccontlem} is
\begin{eqnarray}
& = & \sum_{v-1/2\leq \frac{k}{\log^{1.01}P} \leq v+1/2} \int_{-\frac{1}{2\log^{1.01}P}}^{\frac{1}{2\log^{1.01}P}} \E |\sum_{\substack{n=1, \\  n \; \text{is} \; P \; \text{smooth}}}^{\infty} \frac{\X(n)}{n^{1/2 + i\frac{k}{\log^{1.01}P}}} (n^{-it} - 1)|^2 dt \nonumber \\
& = & \sum_{v-1/2\leq \frac{k}{\log^{1.01}P} \leq v+1/2} \int_{-\frac{1}{2\log^{1.01}P}}^{\frac{1}{2\log^{1.01}P}} \sum_{\substack{n=1, \\  n \; \text{is} \; P \; \text{smooth}}}^{\infty} \frac{|n^{-it}-1|^2}{n} dt . \nonumber
\end{eqnarray}
Here we have $|n^{-it} - 1| \ll \min\{|t|\log n, 1\} \leq \min\{\frac{\log n}{\log^{1.01}P}, 1\}$. Thus the contribution to the series from those $n \leq P^{\log\log P}$ is $\ll \frac{(\log\log P)^2}{\log^{0.02}P} \sum_{\substack{n=1, \\  n \; \text{is} \; P \; \text{smooth}}}^{\infty} \frac{1}{n}$, and since $\sum_{\substack{n=1, \\  n \; \text{is} \; P \; \text{smooth}}}^{\infty} \frac{1}{n} = \prod_{p \leq P} (1 - \frac{1}{p})^{-1} \ll \log P$ this is all $\ll (\log\log P)^2 \log^{0.98}P$. The contribution to the series from those $n > P^{\log\log P}$ (where we use the trivial bound $|n^{-it} - 1| \ll 1$) is also acceptably small, namely
$$ \ll e^{-\log\log P} \sum_{\substack{n=1, \\  n \; \text{is} \; P \; \text{smooth}}}^{\infty} \frac{1}{n^{1-1/\log P}} = e^{-\log\log P} \prod_{p \leq P} (1 - \frac{1}{p^{1-1/\log P}})^{-1} \ll e^{-\log\log P} \log P = 1 . $$
\end{proof}

\begin{lemma}\label{lemma: Multiplicative Chaos Result2}
Let $v\in \mathbb{Z}$.
Uniformly for all large $P$ and $2/3 \leq q \leq 1$, we have
$$ \frac{1}{(|v|+1)^{q/4}} \E( \frac{1}{\log^{1.01}P} \sum_{v-1/2\leq \frac{k}{\log^{1.01}P} \leq v+1/2} |F_{P}^{\text{rand}}(1/2 + i\frac{k}{\log^{1.01}P})|^2 )^q \ll \Biggl(\frac{\log P}{1 + (1-q)\sqrt{\log\log P}}\Biggr)^{q} . $$
\end{lemma}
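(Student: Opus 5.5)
The plan is to compare the discrete sum with the continuous integral in Lemma \ref{lemma: Multiplicative Chaos Result1}, paying a difference term that Lemma \ref{disccontlem} controls; this is the route of \cite[Multiplicative Chaos Result 2]{H23}. Write $h:=1/\log^{1.01}P$ and let $K_v$ be the set of integers $k$ with $v-1/2\leq kh\leq v+1/2$. For each $k\in K_v$ and $|t|\leq h/2$ we have the pointwise bound
\[
|F_P^{\mathrm{rand}}(1/2+ikh)|^2\leq 2|F_P^{\mathrm{rand}}(1/2+ikh+it)|^2+2|F_P^{\mathrm{rand}}(1/2+ikh+it)-F_P^{\mathrm{rand}}(1/2+ikh)|^2 .
\]
Average this over $t\in[-h/2,h/2]$ (an interval of length $h$) and sum over $k\in K_v$. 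The intervals $[kh-h/2,kh+h/2]$ are disjoint and lie in $[v-1/2-h,v+1/2+h]\subseteq[v-1,v+1]$. This gives
\[
h\sum_{k\in K_v}|F_P^{\mathrm{rand}}(1/2+ikh)|^2\leq 2\int_{v-1}^{v+1}|F_P^{\mathrm{rand}}(1/2+it)|^2\,dt+2D_v,
\]
where $D_v$ denotes the random double sum/integral inside the expectation in Lemma \ref{disccontlem}.

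Next, raise both sides to the power $q$ and use subadditivity $(a+b)^q\leq a^q+b^q$, valid since $q\leq 1$. Split $[v-1,v+1]$ as $[v-3/2,v-1/2]\cup[v-1/2,v+1/2]\cup[v+1/2,v+3/2]$ and apply subadditivity again. Lemma \ref{lemma: Multiplicative Chaos Result1} then applies to each piece with centres $v-1,v,v+1$, for which $(|v\pm1|+1)^{q/4}\asymp(|v|+1)^{q/4}$. After dividing by $(|v|+1)^{q/4}$, this gives an integral contribution of order $\bigl(\log P/(1+(1-q)\sqrt{\log\log P})\bigr)^q$.

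For the difference term, Jensen's inequality (concavity of $y\mapsto y^q$) together with Lemma \ref{disccontlem} gives $\E D_v^q\leq(\E D_v)^q\ll(\log^{0.99}P)^q$. This is negligible, because $\log P/(1+(1-q)\sqrt{\log\log P})\gg\log P/\sqrt{\log\log P}$, which exceeds $\log^{0.99}P$ by a large factor. Multiplying by $(|v|+1)^{-q/4}\leq 1$ does no harm, so we obtain the claimed bound.

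The only point needing care is this last step: Lemma \ref{disccontlem} was stated for integer $v$ with the same window as here, so it applies directly. The shifted-centre applications of Lemma \ref{lemma: Multiplicative Chaos Result1} are also harmless, since that lemma holds for any real centre and the factor $(|v|+1)^{q/4}$ changes by at most a constant. I therefore expect no genuine obstacle; the argument is a routine discretisation.
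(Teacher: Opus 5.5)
Your proposal is correct and follows the same route as the paper, which is Harper's argument for \cite[Multiplicative Chaos Result 2]{H23}. You bound the discrete sum by the integral plus the discretisation error, control the integral by Lemma \ref{lemma: Multiplicative Chaos Result1}, and control the error by Jensen/H\"older together with Lemma \ref{disccontlem}. Your split of $[v-1,v+1]$ into three unit windows also makes explicit how to handle the small intervals sticking out slightly beyond $[v-1/2,v+1/2]$, a point the paper passes over silently.
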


\begin{proof}
To deduce this from Lemma \ref{lemma: Multiplicative Chaos Result1}, it will suffice to prove a suitable upper bound for
$$ \E( \sum_{v-1/2\leq \frac{k}{\log^{1.01}P} \leq v+1/2} \int_{-1/(2\log^{1.01}P)}^{1/(2\log^{1.01}P)} |F_{P}^{\text{rand}}(1/2 + i\frac{k}{\log^{1.01}P} + it) - F_{P}^{\text{rand}}(1/2 + i\frac{k}{\log^{1.01}P})|^2 dt )^q . $$
But using H\"{o}lder's inequality to compare the $q$-th moment with the first moment, we see this quantity is at most
$$ \Biggl( \E \sum_{v-1/2\leq \frac{k}{\log^{1.01}P} \leq v+1/2} \int_{-\frac{1}{2\log^{1.01}P}}^{\frac{1}{2\log^{1.01}P}} |F_{P}^{\text{rand}}(1/2 + i\frac{k}{\log^{1.01}P} + it) - F_{P}^{\text{rand}}(1/2 + i\frac{k}{\log^{1.01}P})|^2 dt \Biggr)^q , $$
which we can bound acceptably using Lemma \ref{disccontlem}. 
\end{proof}
\subsection{Preliminary reduction}\label{thm1small}
It suffices to consider the range \(2/3\leq q\leq 1\). Indeed, if
\(0\leq    q<2/3\), then H\"older's inequality gives
\[
\E^{\rm holo}
\left|
\sum_{n\leq x}\lambda_f(n)
\right|^{2q}
\ll
\left(
\E^{\rm holo}
\left|
\sum_{n\leq x}\lambda_f(n)
\right|^{4/3}
\right)^{3q/2},
\]
so the desired estimate follows from the case \(q=2/3\).
We may assume that
\(
L_k:=\min\{x,k/x\}
\)
is  large, since otherwise the conclusion of
Theorem~\ref{mainthm:low-moment-modular-sharp} is trivial. Let \(P\) be the
largest  number not exceeding
\(
\exp\bigl((\log L_k)^{1/6}\bigr)
\)
such that \((\log P)^{0.01}\) is an integer. It follows that
\(
\log P\asymp(\log L_k)^{1/6}
\) and \(
\log\log P\asymp\log\log L_k.
\)
Consequently, to prove Theorem~\ref{mainthm:low-moment-modular-sharp}, it is
enough to show that
\begin{equation}\label{stpmaindisplay}
\E^{\rm holo}
\left|
\sum_{n\leq x}\lambda_f(n)
\right|^{2q}
\ll
\left(
\frac{x}
{1+(1-q)\sqrt{\log\log P}}
\right)^q.
\end{equation}
Set $M := 2\log^{1.02}P$. For each integer $m$ satisfying $|m| \leq M$ and each  \(f\in \mathcal{H}_k\), we set \[S_{m}(f) := \Re \sum_{p \leq P} (\frac{\lambda_f(p)}{p^{1/2 + im/\log^{1.01}P}} + \frac{\lambda_f(p^2)-1}{2p^{1 + 2im/\log^{1.01}P}}).\]
Let $\X$ be a $\GL_2$ random multiplicative function.  Set  
\[S_{m}(\X) := \Re \sum_{p \leq P} (\frac{\X(p)}{p^{1/2 + im/\log^{1.01}P}} + \frac{\X(p^2)-1}{2p^{1 + 2im/\log^{1.01}P}}).\]
\subsection{The conditioning argument}\label{subsecmaincond}
Let $P(n)$ denote the largest prime factor of $n$, and let $\Psi(x,y) := \#\{n \leq x : P(n) \leq y \}$. It follows from H\"{o}lder's inequality and Lemma \ref{lemma:orthogonality-holomorphic} that
\begin{eqnarray}
&&\E^{\rm holo} |\sum_{n \leq x, P(n) \leq x^{1/\log\log x}} \lambda_f(n)|^{2q} \\ & \leq &\Biggl(\E^{\rm holo} |\sum_{n \leq x, P(n) \leq x^{1/\log\log x}} \lambda_f(n)|^{2} \Biggr)^q = (\Psi(x,x^{1/\log\log x})+O(x^2e^{-k}))^q.\nonumber\end{eqnarray} 
Since $x\leq k$, the term $O(x^2e^{-k})$ is very small. By smooth number estimates (see Montgomery and Vaughan~\cite[Theorem 7.6]{MV07}), the last line above  is $\ll (x (\log x)^{-c\log\log\log x})^q$, which gives a negligible contribution in Theorem \ref{mainthm:low-moment-modular-sharp}.

Thus, it suffices to estimate $\E^{\rm holo} |\sum_{n \leq x, P(n) > x^{1/\log\log x}} \lambda_f(n)|^{2q}$.  The partition of unity  $g_j$ given by lemma \ref{lemma:partition-of-unity} allow us to rewrite $\E^{\rm holo} |\sum_{n \leq x, P(n) > x^{1/\log\log x}} \lambda_f(n)|^{2q}$ as
\begin{eqnarray}
&& \E^{\rm holo} \prod_{i=-M}^{M} (\sum_{j=-N}^{N+1} g_{j}(S_{i}(f))) \Biggl|\sum_{\substack{n \leq x, \\ P(n) > x^{1/\log\log x}}} \lambda_f(n) \Biggr|^{2q} \nonumber \\
& = & \sum_{-N \leq j(-M), ..., j(0), ..., j(M) \leq N+1} \E^{\rm holo} \prod_{i=-M}^{M} g_{j(i)}(S_{i}(f)) \Biggl|\sum_{\substack{n \leq x, \\ P(n) > x^{1/\log\log x}}} \lambda_f(n) \Biggr|^{2q} \nonumber \\
& = & \sum_{-N \leq j(-M) , ... , j(0), ..., j(M) \leq N+1} \sigma(\textbf{j}) \E^{\textbf{j}} \Biggl|\sum_{\substack{n \leq x, \\ P(n) > x^{1/\log\log x}}} \lambda_f(n) \Biggr|^{2q} , \nonumber
\end{eqnarray}
where for any $(2M+1)$-vector $\textbf{j}=(j(-M) , ... , j(0), ..., j(M))$ and  any function $W(f)$, we set $$\sigma(\textbf{j}) := \E^{\rm holo} \prod_{i=-M}^{M} g_{j(i)}(S_{i}(f)),~ \E^{\textbf{j}} W := \sigma(\textbf{j})^{-1} \E^{\rm holo} W \prod_{i=-M}^{M} g_{j(i)}(S_{i}(f)).$$  Note that for the constant function 1, we have $\E^{\textbf{j}} 1 = 1$ for all choices of the vector $\textbf{j}$. Then we apply H\"older's inequality to $\E^{\textbf{j}}$ and conclude that
\begin{align}
\label{eq:holder-to-Ej}
\E^{\rm holo}\Biggl|\sum_{\substack{n \leq x, \\ P(n) > x^{1/\log\log x}}} \lambda_f(n) \Biggr|^{2q} \leq \sum_{-N \leq j(-M) , ..., j(0), ..., j(M) \leq N+1} \sigma(\textbf{j})\Biggl( \E^{\textbf{j}} \Biggl|\sum_{\substack{n \leq x, \\ P(n) > x^{1/\log\log x}}} \lambda_f(n) \Biggr|^{2} \Biggr)^{q}. \end{align} 

\subsection{Passing to the random case}\label{subsecmaingorandom} Assume for the moment that our eventual choices of $N$ and  $\delta$ satisfy $x P^{32000((2M+1)/\delta)^2 \log(N\log P)} < k/100$.
Then applying \eqref{eq:holomorphic-Eigenvalues-behave like-random-model1} in Proposition
\ref{prop:holomorphic-Eigenvalues-behave like-random-model} with $Y=2M+1$, we obtain
\begin{align}\label{eq:prop-hecke-random}
&\ \ \ \ \E^{\textbf{j}} \Biggl|\sum_{\substack{n \leq x, \\ P(n) > x^{1/\log\log x}}} \lambda_f(n) \Biggr|^{2} \\&= \frac{1}{\sigma(\textbf{j})} \Biggl( \E \prod_{i=-M}^{M} g_{j(i)}(S_{i}(\X)) \Biggl|\sum_{\substack{n \leq x, \\ P(n) > x^{1/\log\log x}}} \X(n) \Biggr|^{2} + O\left(\frac{x\log x}{(N \log P)^{2000(2M+1)(1/\delta)^2}} \right) \Biggr)\nonumber.
\end{align}  
Since the functions $g_j$ form a partition of unity, we have $$\sum_{\textbf{j}} \sigma(\textbf{j}) = \E^{\rm Holo} \prod_{i=-M}^{M} (\sum_{j=-N}^{N+1} g_{j}(S_{i}(f))) = \E^{\rm Holo} 1 = 1+O(e^{-k}).$$ By 
H\"{o}lder's inequality applied to $\sum_{\textbf{j}}$, the total contribution of the error
terms in \eqref{eq:prop-hecke-random} to $\E |\sum_{n \leq x, P(n) > x^{1/\log\log x}} \lambda_f(n)|^{2q}$ is
\begin{equation}\label{eq:contribution-of-error-terms}
\begin{split}
  & \ll \Biggl( \sum_{-N \leq j(-M) , ..., j(0), ..., j(M) \leq N+1} \sigma(\textbf{j}) \cdot \frac{1}{\sigma(\textbf{j})} \frac{x\log x}{(N \log P)^{2000(2M+1)(1/\delta)^2}} \Biggr)^{q} \\& = \Biggl( x \log x(\frac{(2N+2)}{(N \log P)^{2000(1/\delta)^2}})^{2M+1} \Biggr)^{q} .  
\end{split}
    \end{equation}
This is negligible in comparison with the right-hand side of
\eqref{stpmaindisplay}, provided that
\[
\log x
\leq
(N\log P)^{1000(2M+1)(1/\delta)^2}.
\]

Next, we define \[\sigma^{\text{rand}}(\textbf{j}) := \E \prod_{i=-M}^{M} g_{j(i)}(S_{i}(\X))\]for all $(2M+1)$-vectors $\textbf{j}$, where $\X$ is a $\GL_2$ random multiplicative function.  We apply \eqref{eq:holomorphic-Eigenvalues-behave like-random-model-specialcase1} in  Propositions \ref{prop:holomorphic-Eigenvalues-behave like-random-model-specialcase} to obtain $\sigma(\textbf{j})^{1-q} \ll \sigma^{\text{rand}}(\textbf{j})^{1-q} + (\frac{1}{(N \log P)^{(2M+1)(1/\delta)^2}})^{1-q}$. Thus, we have 
\begin{eqnarray}
&& \sum_{-N \leq j(-M) , ..., j(0), ..., j(M) \leq N+1} \sigma(\textbf{j})\Biggl( \frac{1}{\sigma(\textbf{j})} \E \prod_{i=-M}^{M} g_{j(i)}(S_{i}(\X)) \Biggl|\sum_{\substack{n \leq x, \\ P(n) > x^{1/\log\log x}}} \X(n) \Biggr|^{2} \Biggr)^q \nonumber \\
& \ll & \sum_{\textbf{j}} \sigma^{\text{rand}}(\textbf{j})\Biggl( \frac{1}{\sigma^{\text{rand}}(\textbf{j})} \E \prod_{i=-M}^{M} g_{j(i)}(S_{i}(\X)) \Biggl|\sum_{\substack{n \leq x, \\ P(n) > x^{1/\log\log x}}} \X(n) \Biggr|^{2} \Biggr)^q \nonumber \\
&& + (\frac{1}{(N \log P)^{(2M+1)(1/\delta)^2}})^{1-q} \sum_{\textbf{j}} \Biggl( \E \prod_{i=-M}^{M} g_{j(i)}(S_{i}(\X)) \Biggl|\sum_{\substack{n \leq x, \\ P(n) > x^{1/\log\log x}}} \X(n) \Biggr|^{2} \Biggr)^q . \nonumber
\end{eqnarray}
Applying H\"older's inequality again to the sum over $\textbf{j}$, and recalling that the $g_j$ form a partition of unity, we have
\begin{eqnarray}
&&(\frac{1}{(N \log P)^{(2M+1)(1/\delta)^2}})^{1-q} \sum_{\textbf{j}} \Biggl( \E \prod_{i=-M}^{M} g_{j(i)}(S_{i}(\X)) \Biggl|\sum_{\substack{n \leq x, \\ P(n) > x^{1/\log\log x}}} \X(n) \Biggr|^{2} \Biggr)^q \nonumber\\
& \ll & (\frac{1}{(N \log P)^{(2M+1)(1/\delta)^2}})^{1-q} \cdot ((2N+2)^{2M+1})^{1-q} \cdot \Biggl( \sum_{\textbf{j}} \E \prod_{i=-M}^{M} g_{j(i)}(S_{i}(\X)) \Biggl|\sum_{\substack{n \leq x, \\ P(n) > x^{1/\log\log x}}} \X(n) \Biggr|^{2} \Biggr)^q \nonumber \\
& = & \Biggl( (\frac{(2N+2)}{(N \log P)^{(1/\delta)^2}})^{2M+1} \Biggr)^{1-q} \Biggl( \E \Biggl|\sum_{\substack{n \leq x, \\ P(n) > x^{1/\log\log x}}} \X(n) \Biggr|^{2} \Biggr)^q \leq (\log P)^{-(1-q)} x^q. \nonumber
\end{eqnarray}
which gives a negligible contribution to \eqref{stpmaindisplay}.

We now define $\E^{\textbf{j}, \text{rand}} W := \sigma^{\text{rand}}(\textbf{j})^{-1} \E W \prod_{i=-M}^{M} g_{j(i)}(S_{i}(\X))$ for all random variables $W$. To prove Theorem \ref{mainthm:low-moment-modular-sharp} it remains to show that
\begin{equation}\label{eq:what-remains-to-prove-after-pass-to-RMF}
\sum_{-N \leq j(-M) , ..., j(0), ..., j(M) \leq N+1} \sigma^{\text{rand}}(\textbf{j}) \Biggl( \E^{\textbf{j}, \text{rand}} \Biggl|\sum_{\substack{n \leq x, \\ P(n) > x^{1/\log\log x}}} \X(n) \Biggr|^{2} \Biggr)^{q} \ll \Biggl(\frac{x}{1 + (1-q)\sqrt{\log\log P}} \Biggr)^q .
\end{equation}

\subsection{Passing to Euler products}\label{subsecpasseuler} With $P< x^{1/\log\log x}$, we can use the same argument as in section 3.4 of Harper~\cite{H23} to show that, the left hand side of \eqref{eq:what-remains-to-prove-after-pass-to-RMF} is 

\begin{equation}\label{produpperintrand}
\ll (\frac{x}{\log P})^q \sum_{\textbf{j}} \sigma^{\text{rand}}(\textbf{j}) (\E^{\textbf{j}, \text{rand}} \int_{-\infty}^{\infty} \frac{|F_{P}^{\text{rand}}(1/2 + it)|^2}{|1/2+it|^2} dt)^q ,
\end{equation}
where \begin{align*}
F_{P}^{\text{rand}}(s) := \prod_{p \leq P} (1 - \frac{\X(p)}{p^s}+\frac{1}{p^{2s}})^{-1}=\sum_{\substack{n=1, \\  n \; \text{is} \; P \; \text{smooth}}}^{\infty} \frac{\X(n)}{n^s}
\end{align*} is the Euler product associated to $\X(n)$ on $P$-smooth numbers. 

Since we have restricted to the range $2/3 \leq q \leq 1$, we may decompose the integral in \eqref{produpperintrand} into sub-intervals of length 1 and so that the expression in \eqref{produpperintrand} is
\begin{eqnarray}
& \leq & (\frac{x}{\log P})^q \sum_{\textbf{j}} \sigma^{\text{rand}}(\textbf{j}) \sum_{v=-\infty}^{\infty} (\E^{\textbf{j}, \text{rand}} \int_{v-1/2}^{v+1/2} \frac{|F_{P}^{\text{rand}}(1/2 + it)|^2}{|1/2+it|^2} dt)^q \nonumber \\
& \ll & (\frac{x}{\log P})^q \sum_{v=-\infty}^{\infty} \frac{1}{(|v|+1)^{2q}} \sum_{\textbf{j}} \sigma^{\text{rand}}(\textbf{j}) (\E^{\textbf{j}, \text{rand}} \int_{v-1/2}^{v+1/2} |F_{P}^{\text{rand}}(1/2 + it)|^2 dt)^q . \nonumber
\end{eqnarray} 
We first deal with the terms with
\(|v|>\log^{0.01}P\).
We apply H\"{o}lder's inequality  and the orthogonality of random multiplicative functions to see their contribution is
\begin{eqnarray}
& \leq & (\frac{x}{\log P})^q \sum_{|v| > \log^{0.01}P} \frac{1}{(|v|+1)^{4/3}} (\sum_{\textbf{j}} \sigma^{\text{rand}}(\textbf{j}) \E^{\textbf{j}, \text{rand}} \int_{v-1/2}^{v+1/2} |F_{P}^{\text{rand}}(1/2 + it)|^2 dt)^q \nonumber \\
& = & (\frac{x}{\log P})^q \sum_{|v| > \log^{0.01}P} \frac{1}{(|v|+1)^{4/3}} (\E \int_{v-1/2}^{v+1/2} |F_{P}^{\text{rand}}(1/2 + it)|^2 dt)^q , \nonumber\\
& = & (\frac{x}{\log P})^q \sum_{|v| > \log^{0.01}P} \frac{1}{(|v|+1)^{4/3}} ( \sum_{\substack{n = 1, \\ n \; \text{is} \; P \; \text{smooth}}}^{\infty} \frac{1}{n} )^q \ll (\frac{x}{\log P})^q \frac{1}{\log^{1/300}P} \log^{q}P , \end{eqnarray}
which is negligible for \eqref{eq:what-remains-to-prove-after-pass-to-RMF}.

In view of \eqref{eq:what-remains-to-prove-after-pass-to-RMF} and \eqref{produpperintrand}, it is sufficient to show that, uniformly for all $|v| \leq \log^{0.01}P$, we have
\begin{equation}\label{eq:goal-euler-product-upperbd}
\frac{1}{(|v|+1)^{q/4}} \sum_{\textbf{j}} \sigma^{\text{rand}}(\textbf{j}) (\E^{\textbf{j}, \text{rand}} \int_{v-1/2}^{v+1/2} |F_{P}^{\text{rand}}(1/2 + it)|^2 dt)^q \ll \Biggl(\frac{\log P}{1 + (1-q)\sqrt{\log\log P}}\Biggr)^{q} .
\end{equation}

\subsection{Conclusion}\label{subsecrefinecond} We focus on the case $v=0$ in \eqref{eq:goal-euler-product-upperbd} since the treatments of the remaining cases are similar.
In this case, we have
\begin{eqnarray}
&& \sum_{\textbf{j}} \sigma^{\text{rand}}(\textbf{j}) (\E^{\textbf{j}, \text{rand}} \int_{-1/2}^{1/2} |F_{P}^{\text{rand}}(1/2 + it)|^2 dt)^q \nonumber \\
& \ll & \sum_{\textbf{j}} \sigma^{\text{rand}}(\textbf{j}) ( \E^{\textbf{j}, \text{rand}} \sum_{|k| \leq \frac{\log^{1.01}P}{2}} \int_{-\frac{1}{2\log^{1.01}P}}^{\frac{1}{2\log^{1.01}P}} |F_{P}^{\text{rand}}(\frac{1}{2} + i\frac{k}{\log^{1.01}P}+it) - F_{P}^{\text{rand}}(\frac{1}{2} + i\frac{k}{\log^{1.01}P})|^2 dt )^q \nonumber \\
&& + \sum_{\textbf{j}} \sigma^{\text{rand}}(\textbf{j}) (\E^{\textbf{j}, \text{rand}} \frac{1}{\log^{1.01}P} \sum_{|k| \leq (\log^{1.01}P)/2} |F_{P}^{\text{rand}}(1/2 + i\frac{k}{\log^{1.01}P})|^2 )^q . \nonumber
\end{eqnarray}
Applying H\"older's inequality to the sum over $\mathbf{j}$ and Lemma \ref{disccontlem}, we see  the
first term is
$$ \leq \Biggl( \E \sum_{|k| \leq \frac{\log^{1.01}P}{2}} \int_{-\frac{1}{2\log^{1.01}P}}^{\frac{1}{2\log^{1.01}P}} |F_{P}^{\text{rand}}(1/2 + i\frac{k}{\log^{1.01}P} + it) - F_{P}^{\text{rand}}(1/2 + i\frac{k}{\log^{1.01}P})|^2 dt \Biggr)^q \ll \log^{0.99q}P, $$
which gives a negligible contribution to \eqref{eq:goal-euler-product-upperbd}. Thus, it remains to handle the the second term.

We now define
$$ \mathcal{T} := \{k \in \Z : |F_{P}^{\text{rand}}(1/2 + i\frac{k}{\log^{1.01}P})| \geq \log^{1.1}P \;\;\; \text{or} \;\;\; |F_{P}^{\text{rand}}(1/2 + i\frac{k}{\log^{1.01}P})| \leq \frac{1}{\log^{1.1}P} \} , $$
say.  We split the sum over $|m| \leq (\log^{1.01}P)/2$ according to whether $m \in \mathcal{T}$ or not, and then apply H\"{o}lder's inequality to obtain
\begin{eqnarray}
&& \sum_{\textbf{j}} \sigma^{\text{rand}}(\textbf{j}) (\E^{\textbf{j}, \text{rand}} \frac{1}{\log^{1.01}P} \sum_{|k| \leq (\log^{1.01}P)/2} |F_{P}^{\text{rand}}(1/2 + i\frac{k}{\log^{1.01}P})|^2 )^q \nonumber \\
& \leq & \sum_{\textbf{j}} \sigma^{\text{rand}}(\textbf{j}) (\E^{\textbf{j}, \text{rand}} \frac{1}{\log^{1.01}P} \sum_{\substack{|k| \leq (\log^{1.01}P)/2, \\ k \notin \mathcal{T}}} |F_{P}^{\text{rand}}(1/2 + i\frac{k}{\log^{1.01}P})|^2 )^q + \nonumber \\
&& + \Biggl( \frac{1}{\log^{1.01}P} \sum_{|k| \leq (\log^{1.01}P)/2} \E \textbf{1}_{k \in \mathcal{T}} |F_{P}^{\text{rand}}(1/2 + i\frac{k}{\log^{1.01}P})|^2 \Biggr)^q . \nonumber
\end{eqnarray}
The contribution form those $k$ with $|t_k:=k/\log^{1.01}P| \leq 1/\log\log P$ to the second term on the right hand side is at most
 \[
\begin{aligned}
\Biggl(\frac{1}{\log^{1.01}P}
\sum_{\substack{|k|\leq \log^{1.01}P/2\\
|t_k|\leq 1/\log\log P}}
\mathbb{E}
\left|
F_P\left(\frac12+it_k\right)
\right|^2\Biggr)^q
&\ll
\left(\frac{1}{\log^{1.01}P}
\frac{\log^{1.01}P}{\log\log P}\log P \right)^q \\
&=\left(\frac{\log P }{\log\log P}\right)^q,
\end{aligned}
\]
which is negligible for \eqref{eq:goal-euler-product-upperbd}. We next handle the the contribution form those $k$ with $1/\log\log P\leq t_k \leq 1/2$ and the contribution form those $k$ with $-1/2\leq t_k\leq 1/\log\log P$ can be handled similarly.
By the definition of  $\mathcal{T}$, \begin{align}\label{eq:E1T}\E \textbf{1}_{k \in \mathcal{T}} |F_{P}^{\text{rand}}(1/2 + i\frac{k}{\log^{1.01}P})|^2\ll(\log^{1.1}P)^{-0.2} \E |F_{P}^{\text{rand}}(1/2 + i\frac{k}{\log^{1.01}P})|^{2.2} + (\frac{1}{\log^{1.1}P})^2.
\end{align}
Note that
\[
\prod_{ p \leq 300} \left|1 -\frac{\X(p)}{p^{1/2+ i\frac{k}{\log^{1.01}P}}}+\frac{1}{p^{1 + 2i\frac{k}{\log^{1.01}P}}}\right|^{-2.2} \ll 1
\]
 Applying Lemma \ref{lemma:mean-square-randomEulerproducts-high moment} with $\alpha=1.1,\beta=0, \sigma=0, t_1=t_2=\frac{k}{\log^{1.01}P}$ and $x=300$, we obtain \[\E |F_{P}^{\text{rand}}(1/2 + i\frac{k}{\log^{1.01}P})|^{2.2} \ll \exp\bigg(\sum_{300<p\leq P}\frac{1.21+0.11\cos(2t_1 \log p)}{p}\bigg)  \ll \log^{1.21}P(\log \log P)^{0.11}.
 \]
For the last estimate, we have used \cite[Lemma 3.5]{C11}. It follows from this and \eqref{eq:E1T} that
\begin{align*}
\E \textbf{1}_{k \in \mathcal{T}} |F_{P}^{\text{rand}}(1/2 + i\frac{k}{\log^{1.01}P})|^2&\ll(\log^{1.1}P)^{-0.2} \log^{1.21}P(\log \log P)^{0.11} + (\frac{1}{\log^{1.1}P})^2\\&\ll \log^{0.99}P(\log \log P)^{0.11}.
\end{align*}
Thus, we have
\[\Biggl( \frac{1}{\log^{1.01}P} \sum_{\substack{|k| \leq (\log^{1.01}P)/2\\1/\log\log P\leq t_k \leq 1/2}} \E \textbf{1}_{k \in \mathcal{T}} |F_{P}^{\text{rand}}(1/2 + i\frac{k}{\log^{1.01}P})|^2 \Biggr)^q \ll \log^{0.99q}P(\log \log P)^{0.11q},\]
which gives a negligible contribution to \eqref{eq:goal-euler-product-upperbd}.

Note that the contribution to \eqref{eq:goal-euler-product-upperbd} from the sum over $k \notin \mathcal{T}$ is
\begin{eqnarray}
& \leq & \sum_{\textbf{j}} \sigma^{\text{rand}}(\textbf{j}) (\E^{\textbf{j}, \text{rand}} \frac{1}{\log^{1.01}P} \sum_{\substack{|k| \leq (\log^{1.01}P)/2, \\ k \notin \mathcal{T}}} \textbf{1}_{|S_{k}(\X) - j(k)| \leq 1} |F_{P}^{\text{rand}}(1/2 + i\frac{k}{\log^{1.01}P})|^2 )^q + \nonumber \\
&& + \sum_{\textbf{j}} \sigma^{\text{rand}}(\textbf{j}) (\E^{\textbf{j}, \text{rand}} \frac{1}{\log^{1.01}P} \sum_{|k| \leq (\log^{1.01}P)/2} \textbf{1}_{|S_{k}(\X) - j(k)| > 1} \textbf{1}_{k \notin \mathcal{T}} |F_{P}^{\text{rand}}(1/2 + i\frac{k}{\log^{1.01}P})|^2 )^q . \nonumber
\end{eqnarray}
Applying H\"{o}lder's inequality to the sum over $\textbf{j}$, and using the definitions of $\E^{\textbf{j}, \text{rand}}$ and $\sigma^{\text{rand}}(\textbf{j})$, we obtain
\begin{align*}
&\ \ \ \ \ \sum_{\textbf{j}} \sigma^{\text{rand}}(\textbf{j}) (\E^{\textbf{j}, \text{rand}} \frac{1}{\log^{1.01}P} \sum_{|k| \leq (\log^{1.01}P)/2} \textbf{1}_{|S_{k}(\X) - j(k)| > 1} \textbf{1}_{k \notin \mathcal{T}} |F_{P}^{\text{rand}}(1/2 + i\frac{k}{\log^{1.01}P})|^2 )^q\\&\ll (\frac{1}{\log^{1.01}P} \sum_{|k| \leq \frac{\log^{1.01}P}{2}} \sum_{\textbf{j}} \E \prod_{i=-M}^{M} g_{j(i)}(S_{i}(\X)) \cdot \textbf{1}_{|S_{k}(\X) - j(k)| > 1} \textbf{1}_{k \notin \mathcal{T}} |F_{P}^{\text{rand}}(1/2 + i\frac{k}{\log^{1.01}P})|^2 )^q . \end{align*} 
It follows from the discussion in the final paragraph of
Section~3.5 of \cite{H23} that, for \(N\geq 1.2\log\log P\), we have
\(
\prod_{i=-M}^{M} g_{j(i)}(S_i(\X))
\mathbf{1}_{\{|S_k(\X)-j(k)|>1\}}
\mathbf{1}_{\{k\notin\mathcal{T}\}}
\leq
\delta
\prod_{\substack{-M\leq i\leq M\\ i\neq k}}
g_{j(i)}(S_i(\X)).
\)
Consequently, the contribution from the second line above is bounded by
\begin{align*}
&\ \ \ \ \ \left(
\frac{\delta}{\log^{1.01}P}
\sum_{|k|\leq \frac{\log^{1.01}P}{2}}
\sum_{-N\leq j(k)\leq N+1}
\E\left|
F_P^{\mathrm{rand}}
\left(
\frac12+\frac{ik}{\log^{1.01}P}
\right)
\right|^2
\right)^q \\
&\ll
\left(
\delta N
\sum_{\substack{n\geq 1\\  n \; \text{is} \; P \; \text{smooth}}}
\frac{1}{n}
\right)^q
\ll
\bigl(\delta N\log P\bigr)^q.
\end{align*}
This bound is acceptable for \eqref{eq:goal-euler-product-upperbd} provided that $\delta \leq \frac{1}{N\sqrt{\log\log P}}$.

To complete the proof of \eqref{eq:goal-euler-product-upperbd}
in the case \(v=0\), it remians to show that
\begin{equation}\label{eq:good-k-upperbd}
\begin{aligned}
&\ \ \ \ \sum_{\mathbf{j}}
\sigma^{\mathrm{rand}}(\mathbf{j})
\Biggl(
\E^{\mathbf{j},\mathrm{rand}}
\frac{1}{\log^{1.01}P}
\sum_{\substack{|k|\leq (\log^{1.01}P)/2\\ k\notin\mathcal{T}}}
\mathbf{1}_{|S_k(\X)-j(k)|\leq 1}
\left|
F_P^{\mathrm{rand}}
\left(
\frac12+\frac{ik}{\log^{1.01}P}
\right)
\right|^2
\Biggr)^q \\
&\ll
\Biggl(
\frac{\log P}
{1+(1-q)\sqrt{\log\log P}}
\Biggr)^q.
\end{aligned}
\end{equation}
Recall that
\(
\left|
F_P^{\mathrm{rand}}
\left(
\frac12+\frac{ik}{\log^{1.01}P}
\right)
\right|
=
\exp\Biggl(
-\Re\sum_{p\leq P}
\log\Biggl(
1-
\frac{\X(p)}
{p^{1/2+ik/\log^{1.01}P}}
+
\frac{1}
{p^{1+2ik/\log^{1.01}P}}
\Biggr)
\Biggr) 
\asymp
\exp\bigl(S_k(\X)\bigr).
\) Using $S_{k}(f) \in [j(k)-1,j(k)+1]$ and noting that  $|S_{k}(f)| \leq 1.1\log\log P + O(1)$ if $k \notin \mathcal{T}$,
it follows that left hand side of \eqref{eq:good-k-upperbd} is
\begin{align*}
& \ll  \sum_{\textbf{j}} \sigma^{\text{rand}}(\textbf{j})
\left(
\E^{\textbf{j},\text{rand}}
\frac{1}{\log^{1.01}P}
\sum_{\substack{|k|\leq(\log^{1.01}P)/2,\\
|j(k)|\leq1.1\log\log P+O(1)}}
e^{2j(k)}
\right)^q  \\
& =  \sum_{\textbf{j}} \sigma^{\text{rand}}(\textbf{j})
\left(
\frac{1}{\log^{1.01}P}
\sum_{\substack{|k|\leq(\log^{1.01}P)/2,\\
|j(k)|\leq1.1\log\log P+O(1)}}
e^{2j(k)}
\right)^q .
\end{align*}
By the definition of \(\sigma^{\text{rand}}(\textbf{j})\), the last expression can be rewritten as
\begin{align}
& \E \sum_{\textbf{j}} \prod_{i=-M}^{M} g_{j(i)}(S_{i}(\X)) (\frac{1}{\log^{1.01}P} \sum_{\substack{|k| \leq (\log^{1.01}P)/2, \\ |j(k)| \leq 1.1\log\log P + O(1)}} e^{2j(k)} )^q \nonumber \\
 \ll &\ \E \sum_{\textbf{j}} \prod_{i=-M}^{M} g_{j(i)}(S_{i}(\X)) (\frac{1}{\log^{1.01}P} \sum_{\substack{|k| \leq (\log^{1.01}P)/2, \\ |j(k)| \leq 1.1\log\log P + O(1)}} |F_{P}^{\text{rand}}(1/2 + i\frac{k}{\log^{1.01}P})|^2 )^q + \nonumber \\
& + \E \sum_{\textbf{j}} \prod_{i=-M}^{M} g_{j(i)}(S_{i}(\X)) (\frac{1}{\log^{1.01}P} \sum_{\substack{|k| \leq (\log^{1.01}P)/2, \\ |j(k)| \leq 1.1\log\log P + O(1)}} \textbf{1}_{|S_{k}(\X) - j(k)| > 1} \log^{2.2}P )^q . \nonumber
\end{align}
The second term on the right hand side is $\ll (\delta N \log^{2.2}P)^q$ (see the discussion in \cite[Section 3.6]{H23}). This gives an acceptable contribution for \eqref{eq:goal-euler-product-upperbd}  provided that $$\delta \leq \frac{1}{N \log^{1.2}P \sqrt{\log\log P}}.$$
Since $\sum_{\textbf{j}} \prod_{i=-M}^{M} g_{j(i)}(S_{i}(\X)) \equiv 1$, the first term is \[\leq \E( \frac{1}{\log^{1.01}P} \sum_{|k| \leq (\log^{1.01}P)/2} |F_{P}^{\text{rand}}(1/2 + i\frac{k}{\log^{1.01}P})|^2 )^q,\] which is $\ll \left(\frac{\log P}{1 + (1-q)\sqrt{\log\log P}}\right)^{q}$ by Lemma \ref{lemma: Multiplicative Chaos Result2}.

From the preceding discussions, we need to choose parameters \(N\) and \(\delta\) so that
\begin{align}\label{eq:condition-N-delta}
N \geq 1.2\log\log P\quad
\text{and}~\quad
\delta \leq \frac{1}{N\log^{1.2}P\sqrt{\log\log P}}.
\end{align}
In section \ref{subsecmaingorandom}, we needed to have 
\begin{align}\label{eq:condition-for-x-and-P-thm-holo1}
    x P^{32000((2M+1)/\delta)^2 \log(N\log P)} < k/100
\end{align} 
so that \eqref{eq:holomorphic-Eigenvalues-behave like-random-model1} in Proposition \ref{prop:holomorphic-Eigenvalues-behave like-random-model} could be applied. We also need  \begin{align}\label{eq:condition-for-x-and-P-thm-holo2}
\log x \leq (N\log P)^{1000(2M+1)(1/\delta)^2}
\end{align} 
to ensure \eqref{eq:contribution-of-error-terms} is acceptably small.  In section \ref{subsecpasseuler}, we need to use the fact that \begin{align}\label{eq:condition-for-x-and-P-thm-holo3}
P< x^{1/\log\log x}.
\end{align} 
In view of \eqref{eq:condition-N-delta}, we take $$N = \lceil 1.2\log\log P \rceil \quad\text{and}\quad \delta = \frac{1}{\log^{1.3}P}.$$  Recalling that  \(P\) be the
largest  number not exceeding
\(
\exp\bigl((\log L_k)^{1/6}\bigr)
\)
such that \((\log P)^{0.01}\) is an integer and $M = 2\log^{1.02}P$, we see  \eqref{eq:condition-for-x-and-P-thm-holo1}, \eqref{eq:condition-for-x-and-P-thm-holo2} and \eqref{eq:condition-for-x-and-P-thm-holo3} hold for our choice of $P$ and $ x \leq k
\exp\!\left(-(\log\log k)^2\right)$.
\qed

\begin{proof}[Proof of Theorem \ref{mainthm:low-moment-modular}] Actually, we will prove that uniformly for any $1 \leq x \leq k$ and any $0 \leq q \leq 1$, we have
\begin{equation}\label{eq:equivalent-mainthm-low-moment-modular}
\sumh_{f \in \mathcal{H}_k} |\sum_{n \leq x} \lambda_f(n)|^{2q} \ll \Biggl(\frac{x}{1 + (1-q)\sqrt{\log\log(100L_k^{'})}} \Biggr)^q , \end{equation} 
where $L_k^{'} :=\min\left\{x,\ \log(10k/x)\right\}.
$
 For $ k
\exp\!\left(-(\log\log k)^2\right) \leq x \leq k$, we have $\log\log(100L_k^{'})\asymp \log\log(100\log(10k/x))$ so that Theorem \ref{mainthm:low-moment-modular} follows from \eqref{eq:equivalent-mainthm-low-moment-modular}.

The proof of \eqref{eq:equivalent-mainthm-low-moment-modular} follows the similar argument of Theorem \ref{mainthm:low-moment-modular-sharp} with \eqref{eq:holomorphic-Eigenvalues-behave like-random-model1} in Proposition~\ref{prop:holomorphic-Eigenvalues-behave like-random-model} replaced by \eqref{eq:holomorphic-Eigenvalues-behave like-random-model2} in Proposition~\ref{prop:holomorphic-Eigenvalues-behave like-random-model}. 

Let $L_k^{''} := \min\{x^{1/\log \log x},\log(10k/x)\}$. We shall  choose $P$ to be the largest number below $L_k^{''} $ such that $\log^{0.01}P$ is an integer. Note that $\log\log L_k^{'}  \asymp \log\log L_k^{''} $, and to prove \eqref{eq:equivalent-mainthm-low-moment-modular} it will suffice to show that
\begin{equation}\label{stpmaindisplay1}
\E^{\rm holo} |\sum_{n \leq x} \lambda_f(n)|^{2q} \ll \Biggl(\frac{x}{1 + (1-q)\sqrt{\log\log P}} \Biggr)^q .
\end{equation}
In order to apply \eqref{eq:holomorphic-Eigenvalues-behave like-random-model2} in Proposition \ref{prop:holomorphic-Eigenvalues-behave like-random-model}, we need to have \begin{equation}
x P^{1000(2M+1)(1/\delta)^2 \sqrt{P}} < k/100.\end{equation} 
We also need to take   $N = \lceil 1.2\log\log P \rceil$ and $\delta = \frac{1}{\log^{1.3}P}$.
With $M = 2\log^{1.02}P$, we see this will be satisfied provided that $P^{5000 (\log^{3.62}P)\sqrt{P}} < k/100x$. We also need  $P< x^{1/\log\log x}$. This indeed holds with our choice of $P$.
\end{proof}
\begin{proof}[Proof of Theorem \ref{mainthm:low-moment-maass}]
Let $L_T^{'} := \min\{x^{1/\log \log x},T^{0.9995}/x\}$ be large. We choose $P$ to be the largest number below $\exp\{\log^{1/6}L_T^{'}\} $ such that $\log^{0.01}P$ is an integer. Since \(1\leq x\leq T^{0.999}\), we have $\log \log P \asymp \log \log x$. It suffices to show
\begin{equation}\label{eq: low-moment-maass-transform}
        \frac{12}{T^2}\sum_{j}
        \frac{\zeta(2)}{L(1,\mathrm{sym}^2 u_j)}e^{-t_j/T}\left|
        \sum_{n\leq x}\lambda_j(n)
        \right|^{2q}
        \ll
        \left(
        \frac{x}
        {1+(1-q)\sqrt{\log\log P}}
        \right)^q .
\end{equation}
  The proof of \eqref{eq: low-moment-maass-transform} is simlilar to \eqref{stpmaindisplay} in the proof of Theorem \ref{mainthm:low-moment-modular-sharp} with Lemma \ref{lemma: Eigenvalue like random model-holomorphic} and \eqref{eq:holomorphic-Eigenvalues-behave like-random-model2} in Proposition \ref{prop:holomorphic-Eigenvalues-behave like-random-model} repalecd by Lemma \ref{lemma: Eigenvalue like random model-maass} and Proposition \ref{prop:Maass Eigenvalues behave like random model} (taking $\eta=1/2000$).
\end{proof}
\begin{proof}[Proof of Corollary \ref{cor:quantitative-corollary}] We follow the proof of \cite[Corollary 1]{H23}.
Set $A:=\sqrt{\log\log(10L)}$. By Markov's inequality and
Theorem~\ref{mainthm:low-moment-modular-sharp}, for any $0\leq q\leq 1$,
\[
|\mathcal{E}|_h
\leq
\frac{1}{(\frac{\lambda \sqrt{x}}{\sqrt{A}} )^{2q}}
\sumh_{f\in\mathcal{H}_k}|S_f(x)|^{2q}
\ll
\frac{1}{\lambda^{2q}}
\left(\frac{A}{1+(1-q)A}\right)^q.
\]
Taking $q=1$, we obtain
\[
|\mathcal{E}|_h\ll \frac{A}{\lambda^2}.
\]
Next, take $q=1-\delta$, where
$\delta=(2\log\lambda)^{-1}$. Since $\lambda\geq 2$, we have
$0<\delta<1$, and hence
\[
|\mathcal{E}|_h
\ll
\frac{\lambda^{2\delta}}{\lambda^2}
\left(\frac{A}{1+\delta A}\right)^{1-\delta}
\leq
\frac{\lambda^{2\delta}}{\lambda^2\delta^{1-\delta}}
\leq
\frac{\lambda^{2\delta}}{\lambda^2\delta}
=
\frac{2e\log\lambda}{\lambda^2}.
\]
Combining these two bounds proves the corollary.
\end{proof}

\bibliographystyle{plain}

\end{document}